\numberwithin{equation}{section}
\numberwithin{figure}{section}
\newtheorem{theorem}{Theorem}[section]
\newtheorem{lemma}[theorem]{Lemma}
\newtheorem{proposition}[theorem]{Proposition}
\theoremstyle{definition}
\newtheorem{definition}[theorem]{Definition}
\newtheorem{remark}[theorem]{Remark}
\newtheorem{example}[theorem]{Example}
\newcommand{\C}{{\mathbb{C}}}
\newcommand{\Z}{{\mathbb{Z}}}
\newcommand{\h}{\mathfrak{h}}
\newcommand{\clmg}{c_{\lambda,\mu}^{\gamma}}
\begin{document}

\title{Equivariant Quantum Cohomology of the Grassmannian via the Rim Hook Rule}

\author{Anna Bertiger, Elizabeth Mili\'{c}evi\'{c}, and Kaisa Taipale}

\address{Anna Bertiger, Microsoft, Seattle, WA}
\email{anberti@microsoft.com}

\address{Elizabeth Mili\'{c}evi\'{c}, Department of Mathematics \& Statistics, Haverford College, Haverford, PA}
\email{emilicevic@haverford.edu}

\address{Kaisa Taipale, Department of Mathematics, University of Minnesota, Minneapolis, MN}
\email{taipale@math.umn.edu}


\thanks{AB was supported by Cornell University and the University of Waterloo when this work was completed. EM was partially supported by the Institute for Computational and Experimental Research in Mathematics, the Max-Planck-Institut f\"{u}r Mathematik, Simons Collaboration Grant 318716, and NSF Grant 1600982. KT was partially supported by Cornell University and the Mathematical Sciences Research Institute.}

\keywords{Schubert calculus, equivariant quantum cohomology, core partition, abacus diagram, factorial Schur polynomial}


\subjclass[2010]{114N35, 14N15, 14M15 (Primary), 55N91, 05E05 (Secondary)}




\begin{abstract}
A driving question in (quantum) cohomology of flag varieties is to find non-recursive, positive combinatorial formulas for expressing the product of two classes in a particularly nice basis, called the Schubert basis.  Bertram, Ciocan-Fontanine and Fulton provided a way to compute quantum products of Schubert classes in the Grassmannian of $k$-planes in complex $n$-space by doing classical multiplication and then applying a combinatorial rim hook rule which yields the quantum parameter.  In this paper, we provide a generalization of this rim hook rule to the setting in which there is also an action of the complex torus.  Combining this result with Knutson and Tao's puzzle rule then gives an effective algorithm for computing all equivariant quantum Littlewood-Richardson coefficients.   Interestingly, this rule requires a specialization of torus weights modulo $n$, suggesting a direct connection to the Peterson isomorphism relating quantum and affine Schubert calculus.
\end{abstract}

\maketitle


\section{Introduction}

Quantum cohomology grew out of string theory in the early 1990s. Physicists Candelas, de la Ossa, Green, and Parkes proposed a partial answer to the Clemens conjecture regarding the number of rational curves of given degree on a general quintic threefold, and this brought enormous attention to the mathematical ideas being used by string theorists. A rigorous formulation of (small) quantum cohomology as the deformation of cohomology in which the structure constants count curves satisfying certain incidence conditions was soon developed and extended to a wide class of algebraic varieties and symplectic manifolds; see the survey by Fulton and Pandharipande \cite{FultonPandharipande}. In the mid-1990s Givental proved the conjecture proposed by physicists counting the number of rational curves of given degree on a general quintic threefold \cite{Givental}.   Simultaneously, Givental and Kim introduced equivariant Gromov-Witten invariants and the equivariant quantum cohomology ring \cite{GiventalKim}.

\subsection{Equivariant quantum cohomology of the Grassmannian}

When studying the special case of the Grassmannian of $k$-dimensional subspaces of $\C^n$, the four variants of generalized cohomology discussed here (classical cohomology, quantum cohomology, equivariant cohomology and quantum equivariant cohomology), all have a basis of Schubert classes indexed by partitions with at most $k$ parts each of size at most $n-k$. There are beautiful combinatorial Littlewood-Richardson rules for computing the structure constants in products of this favored basis in classical cohomology.  These \emph{Littlewood-Richardson coefficients} are known to be non-negative integers, and there is an analog of this positivity result in each of the other three contexts.  Since the structure constants for (small) quantum cohomology are enumerative, counting the number of stable maps from rational curves to $Gr(k,n)$ with three marked points mapping into three specified Schubert varieties, they are clearly positive. Graham proved an analog of classical positivity in the equivariant case \cite{Graham}, and Mihalcea proved an analogue for \emph{equivariant quantum Littlewood-Richardson coefficients} \cite{MihalceaAdvances}.  

Prior to the completion of this paper, all known algorithms for computing arbitrary equivariant quantum Littlewood-Richardson coefficients were either recursive or relied on doing computations in a related two-step flag variety. Mihalcea gave the first algorithm for calculating equivariant quantum Littlewood-Richardson coefficients in \cite{MihalceaAdvances} in the form of a (non-positive) recursion. An extension of the puzzle rule of Knutson and Tao \cite{KnutsonTao} to two-step flag varieties has been proved in \cite{BKPT}, and Buch recently generalized this two-step puzzle rule to the equivariant case \cite{Buch}.  The two-step puzzle rule can thus be combined with Buch and Mihalcea's equivariant generalization in \cite{BuchMihalcea} of the ``quantum equals classical'' phenomenon of Buch, Kresch, and Tamvakis \cite{BKT}, in order to compute Schubert structure constants in $QH^*_T(Gr(k,n))$ in a positive, non-recursive manner.  While this paper was near completion, the authors discovered that Gorbounov and Korff had established a \emph{different} non-recursive formula for the equivariant quantum Littlewood-Richardson coefficients, which also does not appeal to two-step flags \cite{GK}. 

In addition to nice Littlewood-Richardson rules, there are analogs of the ring presentation for $H^*(Gr(k,n))$ in terms of Schur polynomials in each of the equivariant and/or quantum contexts. For an overview of the Schur presentation in the classical case, we refer the reader to Fulton's book and the references therein \cite{Fulton}.  The equivariant presentation is also established via the Borel isomorphism, but with the additional perspective of GKM theory as in \cite{KnutsonTao}.  Bertram proved a quantum analogue of the Giambelli and Pieri formulas for the quantum cohomology ring of the Grassmannian \cite{Bertram}, while the equivariant quantum ring presentation was given by Mihalcea\cite{MihalceaTransactions}. In that paper, Mihalcea proves a Giambelli formula which shows that the \emph{factorial Schur polynomials} of \cite{MolevSagan} represent the equivariant quantum Schubert classes.

\subsection{Statement of the main theorem}

  In \cite{BCFF}, Bertram, Ciocan-Fontanine, and Fulton proved a delightful rule for computing the structure constants in the quantum cohomology of $Gr(k,n)$ from the structure constants of the classical cohomology ring of $Gr(k,2n)$.  More specifically, they provide an explicit formula for the quantum Littlewood-Richardson coefficients as signed summations of the classical Littlewood-Richardson coefficients $c_{\lambda, \mu}^{\nu}$ that appear in the expansion of a product of Schubert classes in terms of the Schubert basis.  The algorithm involves removing \emph{rim hooks} from the border strip of the Young diagram for $\nu$ in exchange for picking up signed powers of the quantum variable $q$, and thus became known as the \emph{rim hook rule}.

The main theorem in this paper is an equivariant generalization of the rim hook rule in \cite{BCFF}. In contrast to the pre-existing methods for computing equivariant quantum Littlewood-Richardson coefficients, the method presented in this paper is not recursive and does not rely on related calculations in any two-step flag variety.  In particular, this \emph{equivariant rim hook rule} can be used together with any method of computing equivariant Littlewood-Richardson coefficients, including the Knutson-Tao puzzle package for the computer program Sage \cite{sage}.  The following is an informal statement of Theorem \ref{T:MainTheorem} in the body of this paper.

\begin{theorem} 
The equivariant quantum product of two Schubert classes $\sigma_\lambda \star \sigma_\mu$ in $QH^*_{T}(Gr(k,n))$ can be obtained by computing the equivariant  product of corresponding classes in $H^*_{T}(Gr(k,2n-1))$ and then reducing in a suitable way. This reduction involves both rim hook removal and a specialization of the torus weights modulo $n$.
\end{theorem}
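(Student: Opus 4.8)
The plan is to set up a ring homomorphism comparing the equivariant cohomology of $Gr(k,2n-1)$ with the equivariant quantum cohomology of $Gr(k,n)$, and to show it behaves well on Schubert classes. First I would recall the Giambelli presentations: by Mihalcea's result cited above, $QH^*_T(Gr(k,n))$ is presented by factorial Schur polynomials in $k$ variables with the equivariant parameters $y_1,\dots,y_n$, subject to quantum relations involving $q$; and $H^*_T(Gr(k,2n-1))$ has the classical factorial Schur presentation in the parameters $y_1,\dots,y_{2n-1}$. The key specialization is to send $y_i \mapsto y_{\bar{i}}$ where $\bar{i}$ is the residue of $i$ modulo $n$ (so the $2n-1$ weights collapse onto $n$ weights), and simultaneously to track how this specialization converts the ``overflow'' part of a partition fitting in a $k\times(2n-1-k)$ box into powers of $q$ via rim hook removal, exactly as in the non-equivariant argument of \cite{BCFF}.

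The key steps, in order, are: (1) Show that a partition $\nu$ fitting in the $k\times(n-k)$ box lifts uniquely to a partition in the $k\times(2n-1-k)$ box, and that the classical equivariant product $\sigma_\lambda \cdot \sigma_\mu$ in $H^*_T(Gr(k,2n-1))$, expanded via Knutson–Tao puzzles, produces terms indexed by such larger partitions with coefficients that are polynomials in $y_1,\dots,y_{2n-1}$. (2) Prove a \emph{rim hook lemma}: removing an $n$-rim hook from such a partition corresponds, after the mod-$n$ weight specialization, to multiplication by $\pm q$, with the sign determined by the height of the rim hook — this is the equivariant refinement of the Bertram–Ciocan-Fontanine–Fulton computation, and should be verified by comparing factorial Schur polynomial identities under the specialization $y_i \mapsto y_{\bar i}$. (3) Check that iterating rim hook removal terminates in a partition fitting in the $k\times(n-k)$ box (or produces zero), so the reduction map is well-defined. (4) Verify that the resulting assignment is a ring homomorphism compatible with the Schubert bases, hence computes the equivariant quantum Littlewood–Richardson coefficients; combine with the puzzle rule for the statement about Knutson–Tao puzzles.

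The main obstacle I expect is step (2), the equivariant rim hook lemma. In the non-equivariant case, rim hook removal is a clean combinatorial operation on Young diagrams tied to the vanishing of Schur polynomials in too few variables; equivariantly, one must control how the \emph{factorial} Schur polynomials — which depend on the ordered list of weights — transform when the weight list is folded modulo $n$. The subtlety is that $2n-1$, not $2n$, weights are used, so the folding is not a clean doubling: the residues $1,\dots,n-1$ each appear twice while $0$ appears once, and one must check that this asymmetry is exactly what is needed for the quantum relations (which are not symmetric in $q$) to emerge correctly. I would handle this by working with the abacus/core-partition description of $n$-rim hook removal (as flagged by the paper's keywords) to make the bookkeeping of signs and weight-shifts explicit, and by checking the base cases against Mihalcea's equivariant quantum Pieri rule. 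A secondary, more routine obstacle is confirming that no cancellation or collision occurs among the lifted partitions — i.e., that the lift $k\times(n-k)\hookrightarrow k\times(2n-1-k)$ together with rim hook removal gives a bijection on the relevant index sets — which should follow from a dimension/degree count once the setup is in place.
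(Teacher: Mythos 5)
Your outline founders at step (4), and that step is where the entire content of the theorem lives. The reduction $\varphi$ is \emph{not} a ring homomorphism, and it cannot be obtained by simply specializing $t_i \mapsto t_{i \bmod n}$ and passing to quotients: the ideal presenting $H^*_{T_{2n-1}}(Gr(k,2n-1))$ is $\langle h_{2n-k}(x|t),\dots,h_{2n-1}(x|t)\rangle$, and under the mod-$n$ specialization one has $h_{2n-k}(x|t) \equiv \pm q\, h_{n-k}(x|t) \not\equiv 0 \bmod J$, so the classical ideal does not land in the quantum ideal and no ring map $H^*_{T_{2n-1}}(Gr(k,2n-1)) \to QH^*_{T_n}(Gr(k,n))$ is induced (this is the same tension flagged in Remark \ref{why2n-1}). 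The map $\varphi$ is only a module map defined on the Schubert basis, and the theorem asserts multiplicativity only for products of two classes lifted from $\mathcal{P}_{kn}$; ``verify that the resulting assignment is a ring homomorphism'' is therefore exactly the statement to be proved, and your plan offers no mechanism for proving it. Your steps (1)--(3), including the equivariant rim hook lemma for factorial Schur polynomials (which is the paper's Proposition \ref{T: nCores}, proved via Jacobi--Trudi and the shift-invariance Lemma \ref{lem: quantuminvariance}), determine where individual lifted classes go, but by themselves they do not show $\varphi(\widehat{\sigma_\lambda}\cdot\widehat{\sigma_\mu}) = \sigma_\lambda \star \sigma_\mu$.

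The paper closes this gap by a different architecture: it invokes Mihalcea's characterization (Theorem \ref{T:OneBoxPlusPieri}), by which any $\Lambda[q]$-algebra with the Schubert basis satisfying the equivariant quantum Chevalley/Pieri rule and associativity of multiplication by one box is canonically $QH^*_T(Gr(k,n))$. Thus the work reduces to the rim hook Pieri rule (Proposition \ref{T:RimHookPieri}, easy) and one-box associativity of the lift-multiply-reduce product (Proposition \ref{T:BoxAssoc}), and the latter is the genuinely hard part, resting on two inputs absent from your proposal: the abacus-diagram comparison of covers of $\gamma$ with covers of its $n$-core (Proposition \ref{T:deltarimhookoptions}) and the identity $\varphi\left(\sigma_{\overline{\lambda}}\cdot\widehat{\sigma_\mu}\right) = q\,\sigma_{\lambda^-}\circ\sigma_\mu$ (Proposition \ref{T:FacSchur}), proved with cyclic factorial Schur polynomials. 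If you insist on a direct route, you would have to work upstream of the quotient, showing that for $\lambda,\mu \in \mathcal{P}_{kn}$ the factorial-Schur expansion of $s_\lambda(x|t)s_\mu(x|t)$ in the polynomial ring involves only $\gamma \in \mathcal{P}_{k,2n-1}$ and already coincides with the $H^*_{T_{2n-1}}(Gr(k,2n-1))$ structure constants, and only then apply the specialization-plus-quotient ring map together with Mihalcea's Giambelli formula; you neither state nor justify such a support bound, so as written the argument does not go through.
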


\noindent Here the choice to lift classes to $Gr(k,2n-1)$ is actually quite deliberate, made for reasons of computational convenience in the proof. This is discussed in Remark \ref{why2n-1}.  

To prove this theorem we show that the product defined by this lift and reduction is both associative and coincides with equivariant quantum Chevalley-Monk formula for multiplying by the class corresponding to a single box. Mihalcea's Theorem \ref{T:OneBoxPlusPieri} says that these two conditions suffice to yield a ring isomorphic to $QH^*_T(Gr(k,n))$.  The proof of the equivariant quantum Chevalley-Monk rule is straightforward; the real difficulty lies in proving the associativity statement.  There are two key combinatorial ingredients in the proof of associativity.  The first tool is the abacus model for Young diagrams, which we use to understand the reduction modulo $n$ on the torus weights in Section \ref{S:Abacus}.  In addition, in Section \ref{S:FacSchur} we develop a modification of factorial Schur polynomials, which we call \textit{cyclic factorial Schur polynomials}, in order to relate the classical product in $H^*_{T}(Gr(k,2n-1))$ to the quantum product in $QH^*_{T}(Gr(k,n))$.

\subsection{Directions for future work}  

A stunning result of Peterson proved by Lam and Shimozono \cite{LamShimozono} proves that the equivariant quantum cohomology of any partial flag variety $G/P$ is related to the equivariant homology of the affine Grassmannian. In particular, Peterson's isomorphism says that, up to localization, there is an algebra homomorphism 
\[
H_*^T(Gr_G)_{loc} \twoheadrightarrow QH^*_T(Gr(k,n))_{loc}.
\]
The reduction of torus weights modulo $n$ in the main theorem of this paper also appears in Lam and Shimozono's work \cite{LSDoubleDouble} relating double quantum Schubert polynomials to $k$-double Schur polynomials, which are known to represent equivariant homology classes of the affine Grassmannian \cite{LSkDouble}.  It is the expectation of the authors that cyclic factorial Schur polynomials are the image of the $k$-double Schur polynomials under the Peterson isomorphism.  This connection suggests that the equivariant rim hook rule is a shadow of Peterson's isomorphism and can shed further light on what has become known as the ``quantum equals affine'' phenomenon.  

The authors expect this work to yield equivariant generalizations of several results of Postnikov in \cite{Postnikov} connecting quantum and affine Schubert calculus.  For example, Postnikov provides a quantum Pieri formula, expressed as a sum over \emph{cylindric shapes} using the Jacobi-Trudi formula and an algebraic formulation of the rim hook rule in \cite{BCFF}.  While the authors' equivariant rim hook rule can be combined with any available equivariant Pieri rule for the Grassmannian (e.g. from \cite{Santiago}, \cite{LaksovEqPieri}, \cite{LiRavikumar}, among others) to obtain an equivariant quantum Pieri rule, this approach does not provide a non-negative combinatorial formula.  Instead, the goal would be to find an appropriate combinatorial object which generalizes cylindric shapes to the equivariant context.  In fact, the authors suspect that the cyclic factorial Schur polynomials introduced in this paper are the equivariant analog of the \emph{toric Schur polynomials} in \cite{Postnikov}, which yield quantum Littlewood-Richardson coefficients when expressed in terms of the usual Schur polynomials.

Gorbounov and Korff take an integrable systems approach to studying the equivariant quantum cohomology of the Grassmannian, including an explicit determinantal formula for the equivariant quantum Littlewood-Richardson coefficients; see Corollary 6.29 in \cite{GK}. It is consistent with our rim hook rule and illuminates another aspect of the connection between these non-recursive formulas for equivariant quantum Littlewood-Richardson coefficients and integrable systems.  Moreover, Gorbounov and Korff prove that their vicious and osculating walkers have a concrete representation in the affine nil-Hecke ring, which plays a key role in the proof of Peterson's isomorphism in \cite{LamShimozono}.

\subsection{Organization of the paper}  We begin with a brief review of Schubert calculus on the Grassmannian, with particular emphasis on the equivariant and quantum cohomologies and their polynomial representatives.  In Section \ref{S:RimHook}, we discuss the rim hook rule of \cite{BCFF}, and then provide a precise statement of the equivariant generalization, which is the main result of the paper.  The proof of Theorem \ref{T:MainTheorem} is contained in Section \ref{sectionEQLR}, although we postpone the proofs of three key propositions required for associativity in order not to interrupt the flow of the exposition. Abacus diagrams, which are the first of two important tools for proving associativity, are introduced in Section \ref{S:Abacus}.  Cyclic factorial Schur polynomials are then defined in Section \ref{S:FacSchur}.

\subsection{Acknowledgements} Part of the work for this paper was completed during the semester program on ``Automorphic Forms, Combinatorial Representation Theory, and Multiple Dirichlet Series'' at the Institute for Computational and Experimental Research in Mathematics (ICERM).  The authors are very grateful for the financial support and excellent working conditions at ICERM. The second author wishes to acknowledge the support of the Max-Planck-Institut f\"{u}r Mathematik, and the last author also thanks Cornell University and the Mathematical Sciences Research Institute (MSRI). In addition, several important aspects of the work were tested and implemented using the computer package Sage  \cite{sage}. Thanks to those who wrote the Knutson-Tao puzzles package during Sage Days 45, most notably Franco Saliola, Anne Schilling, Ed Richmond, and Avinash Dalal, in discussions with Allen Knutson.  Finally, the authors are grateful to the anonymous referees, whose comments helped to greatly improve the exposition.


\section{The equivariant rim hook rule} \label{S:RimHook}

\subsection{The Grassmannian and factorial Schur polynomials}\label{sec: notation}

The Grassmannian $Gr(k,n)$ is the complex variety whose points are $k$-planes in $\mathbb{C}^n$. In this paper, we are interested in the equivariant quantum cohomology of the Grassmannian. 

The cohomology of the Grassmannian is governed by the intersection theory of Schubert varieties. A Schubert variety $X_{\lambda}$ is a subvariety of $Gr(k,n)$ satisfying the condition: 
\[
X_{\lambda} := \{ V \in Gr(k,n): \dim (V \cap \mathbb{C}^{n-k-\lambda_i+i}) \geq i, \; \forall \; i\}.
\]
The Schubert varieties of $Gr(k,n)$ are indexed by partitions $\lambda = (\lambda_1, \ldots, \lambda_n)$ with $n-k\geq \lambda_1 \geq \lambda_2 \geq \ldots \geq \lambda_k \geq 0$. We denote the set of such partitions by $\mathcal{P}_{kn}$. We visualize partitions $\lambda$ as \textbf{Young diagrams} of $k$ rows with $\lambda_i$ boxes in the $i^{th}$ row, counting the top row as the first row (this is the English convention), and we shall use this correspondence between partitions and Young diagrams freely.  A \textbf{semi-standard Young tableaux (SSYT) of shape $\lambda$} is a filling of the boxes in the Young diagram with the numbers $1$ to $k$, one number per box,  such that the numbers are weakly increasing in rows proceeding left to right and strictly increasing in columns proceeding top to bottom. Define the \textbf{Schur polynomials} $s_\lambda$ in the variables $x_1, \ldots, x_k$ as
\[
s_\lambda=\sum_T \prod_{\alpha \in T} x_{T(\alpha)},
\] 
where the $T$ are all of the semi-standard fillings of shape $\lambda$ by the numbers 1 through $k$,  the number $T(\alpha)$ is the filling in the box $\alpha$, and the product runs over all boxes in the SSYT.  
Define $e_i$ to be the elementary symmetric polynomials in $x_1, \ldots, x_k$, which can be thought of as $s_{(1)^i}$, and define $h_i$ to be the homogeneous symmetric polynomials $s_{(i)}$.
Then there is an isomorphism 
\begin{align} 
H^*(Gr(k,n)) &\cong \mathbb{Z}[e_1, \ldots, e_k]/ \langle h_{n-k+1}, \ldots, h_n \rangle \label{E:BorelIsom}\\ 
\sigma_{\lambda} &\mapsto s_{\lambda}.
\end{align}

There is a natural $(\mathbb{C}^*)^n$-action on $Gr(k,n)$. Let $T_n \cong (\mathbb{C}^*)^n$ act on $Gr(k,n)$ with weight $t_i$ on the $i^{\text{th}}$ coordinate of $\mathbb{C}^n$. The $T_n$-equivariant cohomology ring $H^*_{T_n}(Gr(k,n))$ is an algebra over the ring $\Lambda := \mathbb{Z}[t_1, \ldots, t_n]$. As a $\Lambda$-module, $H_{T_n}(Gr(k,n))$ again has an additive basis indexed by $\lambda \in \mathcal{P}_{kn}$, which we will also denote by $\{\sigma_{\lambda}\}$. The factorial Schur polynomial $s_{\lambda}(x|t)$ corresponding to a Schubert class $\sigma_{\lambda}$ can then be defined as follows. 
\begin{definition}
The \textbf{factorial Schur polynomial} $s_{\lambda}(x|t)$ corresponding to $\sigma_{\lambda} \in H^*_{T_n}(Gr(k,n))$ is 
\[s_{\lambda}(x|t) = \sum_T \prod_{\alpha \in T}(x_{T(\alpha)}-t_{T(\alpha)+c(\alpha)}) .\] 
The sum is again over all SSYT of shape $\lambda$ filled by the numbers 1 through $k$, and the product is over all boxes $\alpha$ in $T$. Again $T(\alpha)$ gives the filling of $\alpha$ in $T$, and $c(\alpha) = j-i$ when $\alpha$ is the box in the $j^{\text{th}}$ column and $i^{\text{th}}$ row.  
\end{definition} 

We can then define the factorial elementary symmetric functions $e_i(x|t)=s_{(1)^i}(x|t)$ and the factorial homogeneous complete symmetric functions $h_i(x|t)=s_{(i)}(x|t)$.  Notice that if all $t_i=0$, then the polynomial $s_\lambda(x|t)$ specializes to $s_{\lambda}(x)$. Corollary 5.1 and Proposition 5.2 \cite{MihalceaTransactions} then give an isomorphism which generalizes \eqref{E:BorelIsom} to the equivariant setting:
\begin{align}\label{E:EqBorel}
H^*_{T_n}(Gr(k,n)) &\cong \frac{\Lambda [e_1(x|t), \ldots, e_k(x|t)]}{\langle h_{n-k+1}(x|t), \ldots, h_n(x|t) \rangle} \\ 
\sigma_{\lambda} &\mapsto s_{\lambda}(x|t).
\end{align}
In Theorem 1.1 of \cite{MihalceaTransactions}, Mihalcea proves that the $T_n$-equivariant quantum cohomology ring $QH^*_{T_n}(Gr(k,n))$ of the Grassmannian $Gr(k,n)$ is isomorphic to the quotient ring: \begin{equation}\label{eq: qhring}
QH^*_{T_n}(Gr(k,n)) \cong \frac{\Lambda[q, e_1(x|t), \ldots, e_{k}(x|t)]}{\langle h_{n-k+1}(x|t), \ldots, h_{n-1}(x|t), h_{n}(x|t)+(-1)^{k}q \rangle}.
\end{equation} In this case we realize the $s_\lambda(x|t)$ as elements of $QH^*_{T_n}(Gr(k,n)) $ using the \textbf{factorial Jacobi-Trudi formula} due to Chen and Louck; see Theorem 3.3 in \cite{ChenLouck}, or alternatively p. 56 in \cite{Macdonald} for this particular formulation:
\begin{equation}\label{E:FacJacobiTrudi}
 s_{\lambda}(x|t) = \det \left( h_{\lambda_i +j-i}(x|t) \right)_{1 \leq i,j \leq k}.
\end{equation}

\subsection{The rim hook rule}

\begin{figure}[h]
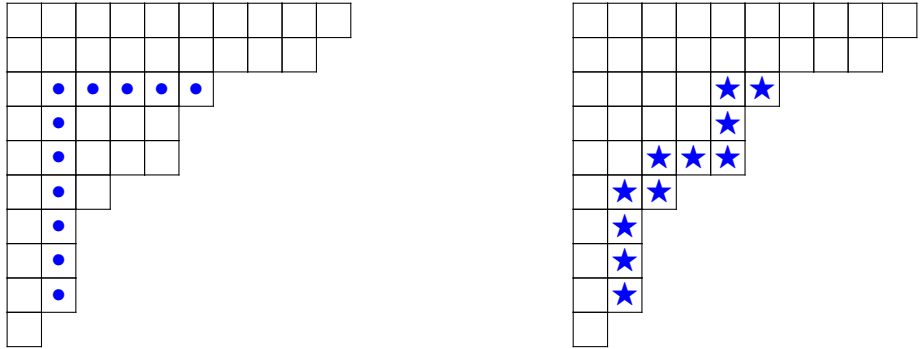

\begin{minipage}[c]{0.45\textwidth}
\[ \tableau[Ys]{ & & & & & & & & & \\ & & & & & & & & \\ & \textcolor{blue}{\bullet} & \textcolor{blue}{\bullet} & \textcolor{blue}{\bullet} &\textcolor{blue}{\bullet} & \textcolor{blue}{\bullet} \\ & \textcolor{blue}{\bullet} & & &   \\  & \textcolor{blue}{\bullet} &   &   &   \\ &\textcolor{blue}{\bullet}  &  \\ &\textcolor{blue}{\bullet}  \\ & \textcolor{blue}{\bullet}  \\ & \textcolor{blue}{\bullet}  \\ \\ } 
\]
\end{minipage}
\begin{minipage}[c]{0.45\textwidth}
\[ \tableau[Ys]{ & & & & & & & & & \\ & & & & & & & & \\ & & & &\textcolor{blue}{\bigstar} & \textcolor{blue}{\bigstar} \\ & & & & \textcolor{blue}{\bigstar}  \\  & &\textcolor{blue}{\bigstar}   & \textcolor{blue}{\bigstar}  & \textcolor{blue}{\bigstar}  \\ &\textcolor{blue}{\bigstar}  &\textcolor{blue}{\bigstar}  \\ &\textcolor{blue}{\bigstar}  \\ & \textcolor{blue}{\bigstar}  \\ & \textcolor{blue}{\bigstar}  \\ \\ } 
\]
\end{minipage}
\caption{An 11-hook (left) and the corresponding 11-rim hook (right).}\label{fig:hooks}
\end{figure}

In \cite{BCFF}, Bertram, Ciocan-Fontanine, and Fulton established a delightful rule presenting quantum Littlewood-Richardson coefficients as signed sums of classical Littlewood-Richardson coefficients. The rim hook algorithm as phrased in \cite{BCFF} does not use the language of lifting Schubert classes, rather carrying out multiplication in the ring of Schur polynomials in the variables $x_1, \ldots, x_k$.  We rephrase the main result from \cite{BCFF} below to draw the most natural parallel to our result.

First we briefly review some required combinatorial terminology. Any box in a Young digram has an associated \textbf{hook}, consisting of all boxes to the right and below the given box, including the box itself.  If the number of boxes in such a hook equals $n$, then we call this an \textbf{$n$-hook}.  Each $n$-hook corresponds to an \textbf{$n$-rim hook}, consisting of the $n$ contiguous boxes running along the border of the Young diagram, starting from the top rightmost box and ending at the bottom-most box of the $n$-hook; see Figure \ref{fig:hooks} for an example.  Removing all possible $n$-rim hooks from a partition $\gamma$ in any order results in the \textbf{$n$-core} for $\gamma$.  We illustrate the process of obtaining the $n$-core of a partition in Figure \ref{fig:core}. Note that there are often multiple ways to remove $n$-rim hooks from a Young diagram; however, the $n$-core is unique.

\begin{figure}[h]
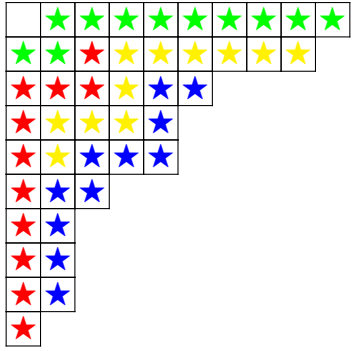

\[ \tableau[Ys]{ & \textcolor{green}{\bigstar}& \textcolor{green}{\bigstar}& \textcolor{green}{\bigstar}&\textcolor{green}{\bigstar} &\textcolor{green}{\bigstar} &\textcolor{green}{\bigstar} & \textcolor{green}{\bigstar}& \textcolor{green}{\bigstar}&\textcolor{green}{\bigstar} \\ \textcolor{green}{\bigstar}&\textcolor{green}{\bigstar} &\textcolor{red}{\bigstar} & \textcolor{yellow}{\bigstar}& \textcolor{yellow}{\bigstar}&\textcolor{yellow}{\bigstar} &\textcolor{yellow}{\bigstar} &\textcolor{yellow}{\bigstar} & \textcolor{yellow}{\bigstar} \\ \textcolor{red}{\bigstar}& \textcolor{red}{\bigstar}& \textcolor{red}{\bigstar}&\textcolor{yellow}{\bigstar} &\textcolor{blue}{\bigstar} & \textcolor{blue}{\bigstar} \\ \textcolor{red}{\bigstar}& \textcolor{yellow}{\bigstar}&\textcolor{yellow}{\bigstar} &\textcolor{yellow}{\bigstar} & \textcolor{blue}{\bigstar}  \\  \textcolor{red}{\bigstar}&\textcolor{yellow}{\bigstar} &\textcolor{blue}{\bigstar}   & \textcolor{blue}{\bigstar}  & \textcolor{blue}{\bigstar}  \\ \textcolor{red}{\bigstar}&\textcolor{blue}{\bigstar}  &\textcolor{blue}{\bigstar}  \\ \textcolor{red}{\bigstar}&\textcolor{blue}{\bigstar}  \\ \textcolor{red}{\bigstar}& \textcolor{blue}{\bigstar}  \\ \textcolor{red}{\bigstar}& \textcolor{blue}{\bigstar}  \\ \textcolor{red}{\bigstar}\\ } 
\]
\caption{The 11-core for (10, 9, 6, 5, 5, 3, 2, 2, 2, 1) is the partition (1).}\label{fig:core}
\end{figure}

\begin{definition}\label{D:ClassicalPhi}
 Define $\varphi: H^*(Gr(k,2n-1)) \to QH^*(Gr(k,n))$ to be the $\Z$-module homomorphism determined by
\begin{align}\label{E:ClassicalPhi}
\sigma_\gamma & \longmapsto   
\begin{cases}
\prod_{i=1}^d\left((-1)^{(\varepsilon_i-k)}q\right) \sigma_{\nu} & \ \  \text{if}\  \nu \in \mathcal{P}_{kn},\\
0 &\ \  \text{if}\  \nu \notin \mathcal{P}_{kn},
 \end{cases} 
\end{align}
for any $\gamma \in \mathcal{P}_{k, 2n-1}$.  Here, we define $\nu$ to be the $n$-core of $\gamma$, noting that $\nu=\gamma$ if $\gamma \in P_{kn}$.  If $\gamma \in \mathcal{P}_{kn}$, the index set for the product is empty, and so the product is 1 in this case. The integer $d$ is the number of $n$-rim hooks removed to get from $\gamma$ to $\nu$, and $\varepsilon_i$ is the height of the $i^{th}$ rim hook removed. 
\end{definition}

Choose the identity map to lift classes in $H^*(Gr(k,n))$ to $H^*(Gr(k,2n-1))$ and denote this lift of $\sigma_{\lambda}$ by $\widehat{\sigma_{\lambda}}$. We also denote by $\sigma_\lambda \star \sigma_\mu$ the quantum product in $QH^*(Gr(k,n))$. The theorem below then follows from \cite{BCFF}.

\begin{theorem}[Main Lemma and Corollary in \cite{BCFF}, rephrased]
Consider $\lambda, \mu \in \mathcal{P}_{kn}$, and write $\widehat{\sigma_{\lambda}} \cdot \widehat{\sigma_{\mu}} = \sum c_{\lambda, \mu}^{\gamma}\sigma_{\gamma} $ in $H^*(Gr(k,2n-1))$. Then, 
\begin{equation}
\sigma_{\lambda} \star \sigma_{\mu} = \sum_{\gamma  \in \mathcal{P}_{k,2n-1}}  c_{\lambda, \mu}^{\gamma} \varphi\left( \sigma_{\gamma} \right) \in QH^*(Gr(k,n)).
\end{equation}
\end{theorem}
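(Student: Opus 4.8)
The plan is to reduce the statement to a single lemma about individual Schur classes and to exploit the fact that the ambient Grassmannian $Gr(k,2n-1)$ is large enough that the classical product loses nothing.

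First I would record the ``no truncation'' point. The Littlewood--Richardson rule gives $s_\lambda\cdot s_\mu=\sum_\gamma c_{\lambda,\mu}^\gamma s_\gamma$ as an identity of Schur polynomials in $x_1,\dots,x_k$, with each occurring $\gamma$ having at most $k$ rows and, since $\lambda_1,\mu_1\le n-k$, satisfying $\gamma_1\le\lambda_1+\mu_1\le 2n-2k\le 2n-1-k$. Hence every such $\gamma$ already lies in $\mathcal P_{k,2n-1}$, so no relation of $H^*(Gr(k,2n-1))$ is used and $\widehat{\sigma_\lambda}\cdot\widehat{\sigma_\mu}=\sum_{\gamma\in\mathcal P_{k,2n-1}}c_{\lambda,\mu}^\gamma\sigma_\gamma$ with the honest Littlewood--Richardson coefficients. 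On the quantum side, Mihalcea's presentation \eqref{eq: qhring} with all $t_i=0$ shows that $\sigma_\lambda\star\sigma_\mu$ is the image in $QH^*(Gr(k,n))$ of the very same polynomial $\sum_\gamma c_{\lambda,\mu}^\gamma s_\gamma$, so $\sigma_\lambda\star\sigma_\mu=\sum_{\gamma\in\mathcal P_{k,2n-1}}c_{\lambda,\mu}^\gamma\,[s_\gamma]$, where $[\,\cdot\,]$ denotes the class in $QH^*(Gr(k,n))$. Comparing the two expressions, it suffices to prove the \emph{reduction lemma}: $[s_\gamma]=\varphi(\sigma_\gamma)$ for every $\gamma\in\mathcal P_{k,2n-1}$.

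For the reduction lemma I would argue in two steps, using the Jacobi--Trudi determinant $s_\gamma=\det(h_{\gamma_i+j-i})_{1\le i,j\le k}$ and the first-column hook lengths $c_i=\gamma_i+k-i$, together with the relations $h_{n-k+1}=\dots=h_{n-1}=0$, $h_n=(-1)^{k-1}q$ and their standard consequences $h_{n+j}=(-1)^{k-1}q\,h_j$ for small $j$. \textbf{Descent step:} if $\gamma$ has a removable $n$-rim hook, giving $\gamma'$ with the same $n$-core, then straightening the determinant amounts, on the $\beta$-sequence $(c_1,\dots,c_k)$, to replacing some $c_i\ge n$ by $c_i-n$ and re-sorting; the substitution $h_n\mapsto(-1)^{k-1}q$ contributes $(-1)^{k-1}q$ and moving the reduced entry past the $\varepsilon-1$ intervening $c_j$'s contributes $(-1)^{\varepsilon-1}$, for a net factor $(-1)^{\varepsilon-k}q$, so $[s_\gamma]=(-1)^{\varepsilon-k}q\,[s_{\gamma'}]$, in agreement with $\varphi$. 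Iterating gives $[s_\gamma]=\bigl(\prod_{i=1}^d(-1)^{\varepsilon_i-k}q\bigr)[s_\nu]$ for $\nu$ the $n$-core. \textbf{Terminal step:} if $\nu\in\mathcal P_{kn}$ then $[s_\nu]=\sigma_\nu$ and we are done; otherwise $c_1\ge n$, and since $\nu$ is an $n$-core we have $c_1-n=c_j$ for some $j\ge 2$, whereupon the same relations make row $1$ of the Jacobi--Trudi determinant a scalar multiple of row $j$, so $[s_\nu]=0$, matching the vanishing branch of $\varphi$.

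I expect the descent step to be the main obstacle: one has to check that the algebraic straightening of the determinant modulo the quantum ideal tracks combinatorial $n$-rim hook removal term by term, with exactly the sign $(-1)^{\varepsilon-k}$, and that the final answer (in particular the parity of $\sum_i\varepsilon_i$) is independent of which removable hook is chosen at each stage; abstractly this independence is automatic because $[s_\gamma]$ is a fixed ring element, but it must be reconciled with the purely combinatorial recipe defining $\varphi$. This descent step is precisely the main lemma of \cite{BCFF}, up to the index shift between their conventions and the present $2n-1$ normalization, so one may either invoke it directly or re-derive it along the lines above; the sign bookkeeping under $\beta$-number re-sorting and the vanishing statement for out-of-box $n$-cores are the two places that need genuine care.
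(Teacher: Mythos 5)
Your proposal is correct and takes essentially the same route as the paper, which presents this theorem as a direct consequence of the main lemma of \cite{BCFF}: your ``reduction lemma'' $[s_\gamma]=\varphi(\sigma_\gamma)$, proved by Jacobi--Trudi/$\beta$-number straightening modulo the quantum ideal $\langle h_{n-k+1},\dots,h_{n-1}, h_n+(-1)^k q\rangle$, is exactly that lemma (the same mechanism the paper redeploys in its equivariant Proposition \ref{T: nCores}). The only material you add is the explicit bridge --- the no-truncation observation in $H^*(Gr(k,2n-1))$ together with quantum Giambelli --- which the paper leaves implicit in its ``rephrasing'' of \cite{BCFF} in terms of lifted Schubert classes.
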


\begin{remark}\label{why2n-1} The choice to lift to $H^*(Gr(k,2n-1))$ is quite deliberate, but not obvious from the combinatorial description using Young diagrams. When expressing $QH^*(Gr(k,m))$ using generators and relations analogous to \eqref{E:BorelIsom},
the relations in the quantum ideal include the homogeneous function of degree $m$. If we were to lift to $H^*(Gr(k,2n))$, the reduction map from $H^*(Gr(k,2n))$ to $QH^*(Gr(k,n))$ would require that $h_{2n} \equiv 0 \in H^*(Gr(k,2n))$ map to $0 \neq q^2 \in QH^*(Gr(k,n))$. Choosing $2n-1$ avoids this problem. These same subtleties arise implicitly in Rietsch's description of a subvariety of $GL_n(\C)$ whose coordinate ring is isomorphic to $QH^*(Gr(k,n))$; indeed, Definition 3.2 and Remark 3.3 in \cite{Rietsch} provided the inspiration for many of the ideas in Section \ref{S:FacSchur}.
\end{remark}

\subsection{The equivariant generalization}

Our theorem generalizes the rim hook rule of \cite{BCFF} to the context in which there is also an action of the torus $T_n = (\mathbb{C}^*)^n$.  The equivariant cohomology ring $H^*_{T_n}(Gr(k,n))$ also has a Schubert basis indexed by Young diagrams, and (using the conventions adopted in this paper) the equivariant Littlewood-Richardson coefficients are homogeneous expressions in non-negative sums of polynomials in $ \Z[t_2-t_1, \dots, t_{n}-t_{n-1}]$.  There are combinatorial formulas for explicitly computing the expansions
\begin{equation}
 \sigma_{\lambda} \cdot \sigma_{\mu} = \sum_{\nu} c_{\lambda, \mu}^{\nu}\sigma_{\nu} \in H^*_{T_n}(Gr(k,n)),
\end{equation}  where now each of the cofficients $c_{\lambda, \mu}^{\nu}$ is a homogeneous polynomial in $\Lambda$. One combinatorially pleasant formula is the \textbf{equivariant puzzle rule} of Knutson and Tao \cite{KnutsonTao}, illustrated in Figure \ref{fig: Puzzle}. 
\begin{figure}[h]
\begin{center}
\includegraphics[scale=.45]{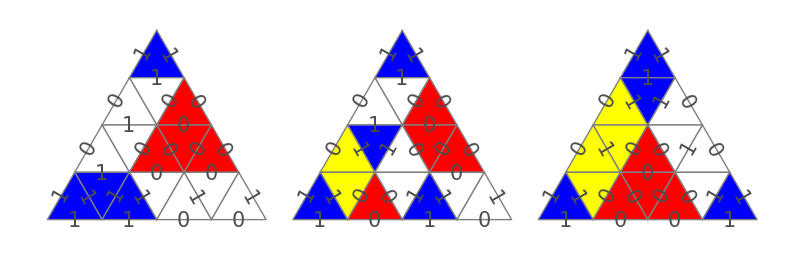}
\caption{Puzzles for computing products in $H^*_T(Gr(2,4))$}\label{fig: Puzzle}
\end{center}
\end{figure} 
The three puzzles shown correspond to the three terms, respectively, in the following product:
\begin{equation*}
\sigma_{\tableau[Yp]{ &  \\}} \cdot \sigma_{\tableau[Yp]{ &  \\}}  =  \sigma_{\tableau[Yp]{ &  \\ & \\ }} + (t_4-t_3)\sigma_{\tableau[Yp]{ &  \\ \\}} +(t_4-t_3)(t_4-t_2)\sigma_{\tableau[Yp]{ &  \\}}.
\end{equation*} 
In particular, the yellow puzzle pieces correspond to the three terms of the form $t_i-t_j$ appearing in this expansion.  (Note that our conventions on the indices for the torus weights are the reverse of those of Knutson and Tao.)

 We again denote the lift of $\sigma_{\lambda}$ from $H^*_{T_n}(Gr(k,n))$ to $H^*_{T_{2n-1}}(Gr(k,2n-1))$ by $\widehat{\sigma_\lambda}$.  We define this lift algebraically using factorial Schur polynomials.  Namely, the lift $\widehat{\sigma_{\lambda}} \in H^*_{T_{2n-1}}(Gr(k,2n-1))$ of $\sigma_{\lambda} \in H^*_{T_n}(Gr(k,n))$ corresponds to the factorial Schur polynomial $s_{\lambda}(x|t)$ in the quotient from \eqref{E:EqBorel} isomorphic to $H^*_{T_{2n-1}}(Gr(k,2n-1))$. By our choice of conventions for the torus weights, if $\lambda \in \mathcal{P}_{kn}$ then none of the weights $t_{n+1}, \ldots, t_{2n-1}$ appear in the expansion given by \eqref{eq: qhring} for the factorial Schur polynomial $s_{\lambda}(x|t)$.  Therefore, on factorial Schur polynomials, our lift is again simply the identity map.

We will now extend the map of $\Z$-modules $\varphi: H^*(Gr(k,2n-1)) \to QH^*(Gr(k,n))$ given in \cite{BCFF} to a map on equivariant cohomology
\begin{equation}\label{E:EqPhiDef}
\varphi :  H^*_{T_{2n-1}}(Gr(k,2n-1)) \longrightarrow QH^*_{T_n}(Gr(k,n)).
\end{equation}  In the equivariant case, the structure constants in $H^*_{T_{2n-1}}(Gr(k,2n-1)$ are polynomials in $t_1, \ldots t_{2n-1}$, but in $QH^*_{T_n}(Gr(k,n))$ the structure constants are polynomials in only $t_1, \ldots t_n$, so we cannot merely use the identity on the structure constants; instead, we use the map $t_i \mapsto t_{i (\text{mod } n)}$.  

\begin{definition}\label{def:EqPhiDef}
For any $t_i$ with $i \in \{1, 2, \dots, 2n-1\}$ and $\gamma \in \mathcal{P}_{k, 2n-1}$, define the map $\varphi$ from Equation \eqref{E:EqPhiDef} to be the $\Z$-module homomorphism determined by 
\begin{align}
t_i & \longmapsto t_{i (\text{mod } n)}  \nonumber \\ 
\sigma_\gamma & \longmapsto   
\begin{cases}
\prod_{i=1}^d\left((-1)^{(\varepsilon_i-k)}q\right) \sigma_{\nu} & \ \  \text{if}\  \nu \in \mathcal{P}_{kn},\\
0 &\ \  \text{if}\  \nu \notin \mathcal{P}_{kn},
 \end{cases} 
\end{align}
 Here, we take the representatives of the congruence classes $\text{mod}\ n$ to be $\{ 1, 2, \dots, n\}$.    
The partition $\nu$ and the statistics $d$ and $\varepsilon_i$ are defined exactly as in Definition \ref{D:ClassicalPhi}.
\end{definition}

\noindent This map $\varphi$ is precisely the same as the non-equivariant version in Definition \ref{D:ClassicalPhi} except that we now also act on the torus weights which appear in the structure constants; this fact justifies our use of the same notation for both maps.

We now state our main result, which we refer to as the Equivariant Rim Hook Rule.

\begin{theorem}[Equivariant Rim Hook Rule]\label{T:MainTheorem}
Let $T_n = (\mathbb{C}^*)^n$ and $T_{2n-1} = (\mathbb{C}^*)^{2n-1}$. Consider any partitions $\lambda, \mu \in \mathcal{P}_{kn}$, and the product expansion $\widehat{\sigma_{\lambda}} \cdot \widehat{\sigma_{\mu}} = \sum c_{\lambda, \mu}^{\gamma}\sigma_{\gamma} $ in $H^*_{T_{2n-1}}(Gr(k,2n-1))$. Then, 
\begin{equation}
\sigma_{\lambda} \star \sigma_{\mu} = \sum_{\gamma \in \mathcal{P}_{k,2n-1}} \varphi \left( c_{\lambda, \mu}^{\gamma}\right) \varphi\left( \sigma_{\gamma} \right) \in QH^*_{T_n}(Gr(k,n)).
\end{equation} 
\end{theorem}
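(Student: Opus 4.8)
\emph{Strategy of proof.} The plan is to turn the right-hand side into the definition of a product and then to invoke Mihalcea's characterization of $QH^*_{T_n}(Gr(k,n))$ recorded in Theorem~\ref{T:OneBoxPlusPieri}. Extend $\varphi$ to all of $H^*_{T_{2n-1}}(Gr(k,2n-1))$ by applying the substitution $t_i\mapsto t_{i\,(\text{mod }n)}$ to structure coefficients and the $n$-core/rim-hook rule to Schubert classes, and on the free module $M=\bigoplus_{\lambda\in\mathcal{P}_{kn}}\Lambda[q]\,\sigma_\lambda$, with $\Lambda=\Z[t_1,\dots,t_n]$, define
\[
\sigma_\lambda\bullet\sigma_\mu\;:=\;\varphi\!\left(\widehat{\sigma_\lambda}\cdot\widehat{\sigma_\mu}\right)\;=\;\sum_{\gamma\in\mathcal{P}_{k,2n-1}}\varphi\!\left(c_{\lambda,\mu}^{\gamma}\right)\varphi(\sigma_\gamma).
\]
By Theorem~\ref{T:OneBoxPlusPieri} it suffices to show that $(M,\bullet)$ is a commutative, associative, unital $\Lambda[q]$-algebra whose multiplication by $\sigma_{(1)}$ satisfies the equivariant quantum Chevalley--Monk rule; that theorem then produces an isomorphism of $\Lambda[q]$-algebras $(M,\bullet)\cong QH^*_{T_n}(Gr(k,n))$ under which $\sigma_\lambda\mapsto\sigma_\lambda$, which is precisely the asserted identity $\sigma_\lambda\bullet\sigma_\mu=\sigma_\lambda\star\sigma_\mu$.

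The easy inputs come first. Commutativity of $\bullet$ is immediate, since $\varphi$ is applied coefficient-by-coefficient to the commutative product of $H^*_{T_{2n-1}}(Gr(k,2n-1))$. For the unit, note that any $\lambda\in\mathcal{P}_{kn}$ fits inside a $k\times(n-k)$ rectangle, so its largest hook length is at most $(n-k-1)+(k-1)+1=n-1<n$; hence $\lambda$ equals its own $n$-core, no $n$-rim hook can be removed, and, since $t_i\mapsto t_{i\,(\text{mod }n)}$ fixes $t_1,\dots,t_n$, we have $\varphi(\sigma_\lambda)=\sigma_\lambda$. With $\widehat{\sigma_\emptyset}=1$ this gives $\sigma_\emptyset\bullet\sigma_\mu=\varphi(\sigma_\mu)=\sigma_\mu$. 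For Chevalley--Monk, expand $\widehat{\sigma_{(1)}}\cdot\widehat{\sigma_\mu}$ in $H^*_{T_{2n-1}}(Gr(k,2n-1))$ by the classical equivariant Chevalley formula: the coefficient of $\sigma_\mu$ is a sum of torus weights determined by the $k$-subset attached to $\mu$, which lies in $\{1,\dots,n\}$ and so is fixed by $\varphi$, while the remaining terms are the classes $\sigma_\nu$ for $\nu$ obtained from $\mu$ by adding a single box. Every such $\nu$ lies in $\mathcal{P}_{kn}$ except, when $\mu_1=n-k$, the single partition $\nu=(n-k+1,\mu_2,\dots,\mu_k)$, which carries exactly one $n$-rim hook; removing it produces an $n$-core $\kappa$ and contributes $(-1)^{\varepsilon_1-k}q\,\sigma_\kappa$ whenever $\kappa\in\mathcal{P}_{kn}$. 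Matching the power of $q$, the sign, and the surviving weight against Mihalcea's equivariant quantum Chevalley--Monk formula is the routine calculation; the reduction modulo $n$ of weights plays no role here, since no $t_j$ with $j>n$ occurs.

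Associativity is the crux, and is where the two combinatorial tools developed later enter. The first observation is a bound: for $\lambda,\mu\in\mathcal{P}_{kn}$ every $\gamma$ with $c_{\lambda,\mu}^{\gamma}\ne 0$ satisfies $\gamma_1\le\lambda_1+\mu_1\le 2(n-k)\le 2n-1-k$ --- which is exactly why the lift to $Gr(k,2n-1)$ is large enough (cf.\ Remark~\ref{why2n-1}) --- so the Jacobi--Trudi expansions of all the $s_\gamma(x|t)$ involved use only the factorial complete homogeneous functions $h_m(x|t)$ with $m<2n-k$, none of which occur among the defining relations of $H^*_{T_{2n-1}}(Gr(k,2n-1))$; hence $\widehat{\sigma_\lambda}\cdot\widehat{\sigma_\mu}$ is already computed inside the polynomial ring $\Z[t_1,\dots,t_{2n-1}][e_1(x|t),\dots,e_k(x|t)]$. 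Applying $t_i\mapsto t_{i\,(\text{mod }n)}$ then turns each $s_\gamma(x|t)$ into the \emph{cyclic factorial Schur polynomial} $\widetilde s_\gamma(x|t):=s_\gamma(x|\widetilde t)$, with $\widetilde t=(t_1,\dots,t_n,t_1,\dots,t_{n-1})$. The two postponed propositions supply: (i) via the abacus model of Section~\ref{S:Abacus}, that removing an $n$-rim hook from $\gamma$ is realized by a single cyclic shift of beads, which pins down exactly which periodized weights $\widetilde t_{n+i}=t_i$ are encountered; and (ii) in Section~\ref{S:FacSchur}, that the image of $\widetilde s_\gamma(x|t)$ under the canonical surjection $\pi\colon\Lambda[q,e_1(x|t),\dots,e_k(x|t)]\twoheadrightarrow QH^*_{T_n}(Gr(k,n))$ of \eqref{eq: qhring} is exactly $\varphi(\sigma_\gamma)$, i.e.\ $\prod_i\big((-1)^{\varepsilon_i-k}q\big)s_\nu(x|t)$ when the $n$-core $\nu$ of $\gamma$ lies in $\mathcal{P}_{kn}$ and $0$ otherwise. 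Granting these, since $s_\lambda(x|t)$ for $\lambda\in\mathcal{P}_{kn}$ involves none of $t_{n+1},\dots,t_{2n-1}$ (this is what makes $\widehat{\sigma_\lambda}$ well-defined) it is fixed by the reduction, so
\[
\sigma_\lambda\bullet\sigma_\mu\;=\;\pi\big(\widetilde s_\lambda(x|t)\cdot\widetilde s_\mu(x|t)\big)\;=\;\pi\big(s_\lambda(x|t)\big)\,\pi\big(s_\mu(x|t)\big);
\]
thus the $\Lambda[q]$-linear map $M\to QH^*_{T_n}(Gr(k,n))$, $\sigma_\lambda\mapsto\pi(s_\lambda(x|t))$, intertwines $\bullet$ with the quantum product, and since it reduces modulo $q$ to the classical equivariant Giambelli identification it is an isomorphism --- in particular $\bullet$ is associative. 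Combined with the Chevalley--Monk verification above, Theorem~\ref{T:OneBoxPlusPieri} then upgrades this isomorphism to $\sigma_\lambda\mapsto\sigma_\lambda$.

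The main obstacle is proposition~(ii): in the equivariant setting the periodized weights $\widetilde t_{n+i}=t_i$ interact with the cells of an $n$-rim hook as it wraps around the Young diagram, so the non-equivariant identity ``$h_{n+j}\equiv(-1)^{k+1}q\,h_j$'' acquires torus-weight corrections that must be tracked cell by cell. I expect to prove the governing determinantal identity for $\widetilde s_\gamma(x|t)$ by induction on the number $d$ of $n$-rim hooks removed, implementing each removal by row and column operations on the cyclic Jacobi--Trudi matrix, with the abacus description of Section~\ref{S:Abacus} guaranteeing that at every step the sign $(-1)^{\varepsilon_i-k}$ and the weight specialization line up.
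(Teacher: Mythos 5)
Your proposal is correct in outline but handles the crux by a genuinely different route than the paper. The paper never proves full associativity of the lift-and-reduce product: it verifies the Pieri rule (Proposition \ref{T:RimHookPieri}) and only \emph{one-box} associativity (Proposition \ref{T:BoxAssoc}), the latter by a long direct computation resting on the abacus statement (Proposition \ref{T:deltarimhookoptions}) and on Proposition \ref{T:FacSchur}, and then lets Mihalcea's characterization (Theorem \ref{T:OneBoxPlusPieri}) finish. You instead show the basis identification $\sigma_\nu\mapsto\pi(s_\nu(x|t))$ into the presentation \eqref{eq: qhring} is multiplicative: the first-part bound $\gamma_1\le\lambda_1+\mu_1\le 2n-1-k$ makes the lifted product an identity of factorial Schur polynomials at the polynomial level (this is the real content behind Remark \ref{why2n-1}), the substitution $t_i\mapsto t_{i\,(\mathrm{mod}\ n)}$ is a ring homomorphism, and your deferred proposition (ii) --- which is essentially the paper's Proposition \ref{T: nCores}, proved in Section \ref{S:FacSchur} by the same Jacobi--Trudi/rim-hook induction you sketch --- identifies $\pi(\widetilde s_\gamma)$ with $\varphi(\sigma_\gamma)$. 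Granting that proposition and Mihalcea's equivariant quantum Giambelli formula (an input the paper also uses), your argument is complete and in fact shorter than you present it: since $\pi(s_\nu(x|t))=\sigma_\nu$ on the nose, the intertwining already yields $\sigma_\lambda\bullet\sigma_\mu=\sigma_\lambda\star\sigma_\mu$ directly, so your closing appeal to Theorem \ref{T:OneBoxPlusPieri}, the Chevalley--Monk verification, and your abacus proposition (i) are all redundant in your route; conversely, what the paper's route buys is that it only ever needs the special products appearing in the Pieri and one-box computations, at the cost of the abacus machinery and the lengthy chain in Proposition \ref{T:BoxAssoc}. Two points to tighten: the claim that $\widehat{\sigma_\lambda}\cdot\widehat{\sigma_\mu}$ is ``already computed inside the polynomial ring'' does not follow from observing which $h_m(x|t)$ occur in Jacobi--Trudi expansions; the correct argument is that factorial Schur polynomials in $x_1,\dots,x_k$ indexed by partitions with at most $k$ rows form a $\Z[t]$-basis of the symmetric polynomial ring, that the expansion of the product involves only $\gamma$ with $\gamma_1\le\lambda_1+\mu_1$ (e.g.\ by bounding the degree in each $x_i$), and that all such $\gamma$ lie in $\mathcal{P}_{k,2n-1}$, so the polynomial structure constants coincide with the geometric $c_{\lambda,\mu}^{\gamma}$; and the isomorphism property of your map should simply be quoted from quantum Giambelli rather than argued by reduction modulo $q$.
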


In Section \ref{S:FacSchur}, we also provide an algebraic interpretation of Theorem \ref{T:MainTheorem} and the map $\varphi$ in terms of corresponding rings of factorial Schur polynomials, including providing a direct equivariant analog of the original statements from \cite{BCFF} in Proposition \ref{T: nCores}.

\begin{example}\label{mainex}
We now provide an example which illustrates how to use this theorem to compute quantum equivariant Littlewood-Richardson coefficients.  The computations in equivariant cohomology to provide these examples were done using the Knutson-Tao puzzles package in Sage \cite{sage}.  

For example, to compute $\sigma_{\tableau[Yp]{ &  \\}} \star \sigma_{\tableau[Yp]{ &  \\}} \in QH^*_T(Gr(2,4))$, we first lift the classes to $H^*_T(Gr(2,7))$ via the identity map. We then use Knutson and Tao's equivariant puzzle method to compute this product in $H^*_T(Gr(2,7)):$
\begin{align}
\widehat{\sigma_{\tableau[Yp]{ &  \\}}} \cdot \widehat{\sigma_{\tableau[Yp]{ &  \\}}} & = (t_4-t_3)(t_4-t_2)\sigma_{\tableau[Yp]{ &  \\}} + (t_4-t_3)\sigma_{\tableau[Yp]{ &  \\ \\}} + \sigma_{\tableau[Yp]{ &  \\ & \\ }} \\
\nonumber &\phantom{=} + (t_5+t_4-t_3-t_2)\sigma_{\tableau[Yp]{ & & \\}}+\sigma_{\tableau[Yp]{ &  &\\ \\}} + \sigma_{\tableau[Yp]{ & & &  \\}}.
\end{align}
The map $\varphi$ on torus weights takes $t_i \mapsto t_{i (\text{mod} 4)}$, so that $t_5\mapsto t_1$ while the rest of the torus weights are unchanged.  Now, $\varphi$ acts as the identity on $\sigma_{\tableau[Yp]{ &   \\}}, \sigma_{\tableau[Yp]{ & \\  \\}},$ and $\sigma_{\tableau[Yp]{ & \\ &  \\}}$, since all three of these Young diagrams already fit into a $2 \times 2$ box.  On the other hand, $\sigma_{\tableau[Yp]{ &  &\\}} \mapsto 0$, since this Young diagram neither fits into a $2 \times 2$ box nor contains any removable 4-rim hooks.  Finally, our rim hook rule says that 
\begin{equation}
\sigma_{\tableau[Yp]{ & & \\   \\}}  \mapsto (-1)^{2-2}q = q \quad\quad \text{and} \quad\quad
\sigma_{\tableau[Yp]{ & & & \\}}  \mapsto (-1)^{1-2}q = -q.
\end{equation}
Altogether, Theorem \ref{T:MainTheorem} says that in $QH^*_T(Gr(2,4))$,
\begin{align}
\sigma_{\tableau[Yp]{ &  \\}} \star \sigma_{\tableau[Yp]{ &  \\}} & = (t_4-t_3)(t_4-t_2)\sigma_{\tableau[Yp]{ &  \\}} + (t_4-t_3)\sigma_{\tableau[Yp]{ &  \\ \\}} + \sigma_{\tableau[Yp]{ & \\ &  \\}} + 0 + q - q\\
 & = (t_4-t_3)(t_4-t_2)\sigma_{\tableau[Yp]{ &  \\}} + (t_4-t_3)\sigma_{\tableau[Yp]{ &  \\ \\}} + \sigma_{\tableau[Yp]{ & \\ &  \\}}.
\end{align}
\end{example}


\section{Equivariant Littlewood-Richardson coefficients} \label{sectionEQLR}

We prove Theorem \ref{T:MainTheorem} using the following slight strengthening of Corollary 7.1 in \cite{MihalceaAdvances}.  This result follows directly by combining several statements in \cite{MihalceaAdvances}, but we provide a short proof for the sake of completeness.

\begin{theorem}[Mihalcea \cite{MihalceaAdvances}]\label{T:OneBoxPlusPieri}
Let $\Lambda = \Z[t_1, \dots, t_n]$.  Suppose that $(A, \diamond)$ is a graded, commutative, possibly non-associative $\Lambda[q]$-algebra with unit satisfying the following three properties:
\begin{enumerate}
\item[(a)] $A$ has an additive $\Lambda[q]$-basis $\{ A_{\lambda} \mid \lambda \in \mathcal{P}_{kn} \}$.
\item[(b)] The equivariant quantum Pieri rule holds; \textit{i.e.} 
\begin{equation}\label{E:MihEqQPieri}
A_{\tableau[Yp]{ \\ }} \diamond A_{\lambda} = \sum\limits_{\mu \rightarrow \lambda} A_{\mu} + c_{\lambda, \tableau[Yp]{ \\ }}^{\lambda} A_{\lambda} + q A_{\lambda^-},
\end{equation}
where $\mu \rightarrow \lambda$ denotes a covering relation in $\mathcal{P}_{kn}$, and we define
\begin{equation}\label{E:RecDenom} c_{\lambda, \tableau[Yp]{ \\ }}^{\lambda} = \sum_{i \in U(\lambda)} t_i - \sum_{j=1}^k t_j,
\end{equation}
where $U(\lambda)$ indexes the upward steps in the partition $\lambda$, recorded from southwest to northeast. Here, $A_{\lambda^-}$ equals the basis element corresponding to $\lambda$ with an $(n-1)$-rim hook removed if such a partition exists, and $A_{\lambda^-}$ equals $0$ if such a partition does not exist. 
\item[(c)] Multiplication by one box is associative; \textit{i.e.}
\begin{equation}\label{E:OneBoxAssoc}
(A_{\tableau[Yp]{ \\ }} \diamond A_{\lambda}) \diamond A_{\mu} = A_{\tableau[Yp]{ \\ }} \diamond ( A_{\lambda} \diamond A_{\mu}).
\end{equation}
\end{enumerate}
Then $A$ is canonically isomorphic to $QH^*_T(Gr(k,n))$ as $\Lambda[q]$-algebras.
\end{theorem}

\begin{proof}
If $A$ is a commutative $\Lambda[q]$-algebra with an additive $\Lambda[q]$-basis indexed by $\lambda \in \mathcal{P}_{kn}$ in which the equivariant quantum Pieri rule \eqref{E:MihEqQPieri} and one box associativity \eqref{E:OneBoxAssoc} both hold, then the proof of Proposition 5.1 in \cite{MihalceaAdvances} shows that the equivariant quantum Littlewood-Richardson coefficients satisfy the recursion in Equations (5.2) and (6.1) in \cite{MihalceaAdvances}.  If further $A$ is a graded algebra with unit, then Theorem 2 in \cite{MihalceaAdvances} implies that $A$ is canonically isomorphic to $QH^*_T(Gr(k,n))$.
\end{proof}

In this paper, we denote by $\circ$ our multiplication in $QH^*_{T_n}(Gr(k,n))$ carried out by lifting basis classes, which are the same as the basis classes in $H^*_{T_n}(Gr(k,n))$, to $H^*_{T_{2n-1}}(Gr(k,2n-1))$ and reducing to $QH^*_{T_n}(Gr(k,n))$.  That is, for $\lambda, \sigma \in \mathcal{P}_{kn}$, 
\begin{equation}\label{E:DefCirc}\sigma_{\lambda} \circ \sigma_{\mu} = \varphi(\widehat{\sigma_{\lambda}} \cdot \widehat{\sigma_{\mu}}).
\end{equation} As before, the notation $\cdot$ denotes classical equivariant multiplication in the appropriate ring, and $\star$ will denote the quantum product. An alternative interpretation of Theorem \ref{T:MainTheorem} is that $\circ = \star$.

We will therefore be interested in the algebra $A$ with additive basis $\{\sigma_{\lambda}\}$ indexed by $\lambda \in \mathcal{P}_{kn}$, and the operation defined by this lift-reduction map $\circ$.  To prove that $(A, \circ)$ satisfies the hypotheses of Theorem \ref{T:OneBoxPlusPieri}, we have two primary tasks: to prove the Pieri rule and one box associativity.

\subsection{The equivariant quantum Pieri rule}

We begin by reviewing Mihalcea's quantum equivariant Pieri rule, and we then show that our lift and reduction map agrees with Mihalcea's formula. Denote by $\lambda^-$ the Young diagram obtained by removing an $(n-1)$-rim hook from $\lambda$. Throughout this paper, we shall use the convention that if no such rim hook exists, then $\sigma_{\lambda^-}=0$. When $\lambda_1=n-k$, we also need the related partition $\overline{\lambda} := ((n-k+1), \lambda_2, \dots, \lambda_k)$.  Note that $\overline{\lambda}$ and $\lambda^-$ are related by removal of a single $n$-rim hook.

\begin{theorem}[Theorem 1 \cite{MihalceaAdvances}] \label{T:EqQPieri}
The following Pieri formula holds in $QH_{T_n}^*(Gr(k,n))$:
\begin{equation}\label{E:EqQPieri}
\sigma_{\tableau[Yp]{ \\ }} \star \sigma_\lambda = \sum_{\substack{\mu \to \lambda \\ \mu \in \mathcal{P}_{kn}}} \sigma_\mu + c_{\lambda,  \tableau[Yp]{ \\ }}^\lambda \sigma_\lambda +q\sigma_{\lambda^-}
\end{equation}
\noindent In particular, specializing $q = 0$ in \eqref{E:EqQPieri} recovers the equivariant Pieri rule in $H_{T_n}^*(Gr(k,n))$.
\end{theorem}

\begin{proposition}[Equivariant rim hook Pieri]\label{T:RimHookPieri} For any Young diagram $\lambda \in \mathcal{P}_{kn}$, we have: 
\begin{equation}
\varphi(\widehat{\sigma_{\tableau[Yp]{ \\ }}} \cdot \widehat{\sigma_\lambda})=\sigma_{\tableau[Yp]{ \\ }} \star \sigma_\lambda.
\end{equation}
\end{proposition}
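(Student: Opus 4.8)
The plan is to compute $\varphi(\widehat{\sigma_{\tableau[Yp]{ \\ }}} \cdot \widehat{\sigma_\lambda})$ directly by combining the \emph{classical} equivariant Pieri rule in $H^*_{T_{2n-1}}(Gr(k,2n-1))$ with the reduction map $\varphi$, and then checking term by term that the result matches Mihalcea's formula in Theorem \ref{T:EqQPieri}. First I would write down the classical equivariant Chevalley--Monk expansion of $\widehat{\sigma_{\tableau[Yp]{ \\ }}} \cdot \widehat{\sigma_\lambda}$ in $H^*_{T_{2n-1}}(Gr(k,2n-1))$: it is $\sum_{\mu \to \lambda} \sigma_\mu + c_{\lambda, \tableau[Yp]{ \\ }}^\lambda \sigma_\lambda$, where the sum now runs over all $\mu \in \mathcal{P}_{k,2n-1}$ covering $\lambda$, and $c_{\lambda, \tableau[Yp]{ \\ }}^\lambda = \sum_{i \in U(\lambda)} t_i - \sum_{j=1}^k t_j$ is the same equivariant coefficient as in \eqref{E:RecDenom}. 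Since $\lambda \in \mathcal{P}_{kn}$, only $\mu$ with at most $n-k$ columns in the first row can pick up a new box taking us outside $\mathcal{P}_{kn}$; I would partition the covers $\mu \to \lambda$ into three classes: (i) those $\mu$ that still lie in $\mathcal{P}_{kn}$; (ii) the single $\mu$ obtained by adding a box to the first row when $\lambda_1 = n-k$, which has $\mu_1 = n-k+1$; and any $\mu$ with more than $k$ rows does not occur since $\lambda$ has at most $k$ rows and adding one box keeps that bound.

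Next I would apply $\varphi$ to each piece. For class (i) the diagram $\mu$ already fits in the $k \times (n-k)$ box, so $\varphi(\sigma_\mu) = \sigma_\mu$, contributing exactly $\sum_{\mu \to \lambda,\ \mu \in \mathcal{P}_{kn}} \sigma_\mu$. For the coefficient term, I must check that $\varphi(c_{\lambda, \tableau[Yp]{ \\ }}^\lambda) = c_{\lambda, \tableau[Yp]{ \\ }}^\lambda$ as an element of $\Lambda = \Z[t_1,\dots,t_n]$: because $\lambda \in \mathcal{P}_{kn}$, the upward steps recorded in $U(\lambda)$ all have index in $\{1,\dots,n\}$, so the reduction $t_i \mapsto t_{i \bmod n}$ fixes every weight appearing, giving $\varphi(c_{\lambda, \tableau[Yp]{ \\ }}^\lambda \sigma_\lambda) = c_{\lambda, \tableau[Yp]{ \\ }}^\lambda \sigma_\lambda$. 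The crux is class (ii): I claim the diagram $\mu$ with $\mu_1 = n-k+1$ (and $\mu_i = \lambda_i$ for $i \geq 2$) has an $n$-rim hook whose removal yields exactly $\lambda^-$, the result of removing an $(n-1)$-rim hook from $\lambda$, and that the height $\varepsilon_1$ of this rim hook satisfies $(-1)^{\varepsilon_1 - k} = +1$. Concretely, the border strip of $\mu$ starting at the end of the first row and wrapping down the rim has length exactly $n$ precisely because the first row has $n-k+1$ boxes and the outer rim of a $k$-row diagram inside width $n-k+1$ picks up the extra $k-1$ vertical steps; removing it drops the first row back and simultaneously removes the unique $(n-1)$-rim hook from $\lambda$. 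Then $\varphi(\sigma_\mu) = (-1)^{\varepsilon_1 - k} q\, \sigma_{\lambda^-}$, and verifying $\varepsilon_1 = k$ (so the sign is trivial) gives the term $q \sigma_{\lambda^-}$. If instead $\lambda^-$ does not exist, the $n$-core of $\mu$ falls outside $\mathcal{P}_{kn}$ and $\varphi(\sigma_\mu) = 0 = q\sigma_{\lambda^-}$, consistent with the convention.

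Summing the three contributions gives $\sum_{\mu \to \lambda,\ \mu \in \mathcal{P}_{kn}} \sigma_\mu + c_{\lambda, \tableau[Yp]{ \\ }}^\lambda \sigma_\lambda + q \sigma_{\lambda^-}$, which is precisely the right-hand side of Theorem \ref{T:EqQPieri}, completing the proof. The main obstacle I anticipate is the combinatorial bookkeeping in class (ii): carefully identifying which cover $\mu$ leaves $\mathcal{P}_{kn}$, showing its $n$-core is $\lambda^-$ (the $(n-1)$-rim hook removal), and pinning down the height $\varepsilon_1 = k$ so that the sign $(-1)^{\varepsilon_1 - k}$ is $+1$ — this is exactly the non-equivariant rim-hook bookkeeping of \cite{BCFF}, and the equivariant novelty is only the easy observation that $\varphi$ fixes the weight $c_{\lambda,\tableau[Yp]{ \\ }}^\lambda$ because all relevant indices already lie in $\{1,\dots,n\}$. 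It may be cleanest to phrase the class-(ii) analysis using the abacus/bead description from Section \ref{S:Abacus}, but since that section comes later I would instead give a self-contained argument about border strips of diagrams in $\mathcal{P}_{k,2n-1}$ obtained from $\mathcal{P}_{kn}$ by a single Pieri move.
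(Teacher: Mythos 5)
Your proposal is correct and follows essentially the same route as the paper: apply the classical equivariant Pieri rule in $H^*_{T_{2n-1}}(Gr(k,2n-1))$, observe that $\varphi$ fixes the covers lying in $\mathcal{P}_{kn}$ and the coefficient $c_{\lambda,\tableau[Yp]{ \\ }}^{\lambda}$ (all weight indices being at most $n$), and send the single extra cover $\overline{\lambda}$ (present only when $\lambda_1 = n-k$) to $q\sigma_{\lambda^-}$ via its $n$-rim hook of height $k$. Your extra bookkeeping on the height/sign and on the degenerate case where $\lambda^-$ does not exist just makes explicit what the paper leaves implicit.
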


\begin{proof}
If $\lambda_1 \ne n-k$ then the result is immediate: we appeal to the non-quantum equivariant Pieri rule by setting $q=0$ in Theorem \ref{T:EqQPieri} to say that \begin{equation}
 \varphi(\widehat{\sigma_{\tableau[Yp]{ \\ }}} \cdot \widehat{\sigma_\lambda}) =  \sum_{\substack{\mu \to \lambda \\  \mu \in \mathcal{P}_{kn}}} \sigma_\mu + c_{\lambda,  \tableau[Yp]{ \\ }}^\lambda \sigma_\lambda = \sigma_{\tableau[Yp]{ \\ }} \star \sigma_\lambda .
\end{equation}  If $\lambda_1 = n-k$,  recall that $\overline{\lambda}$ is the Young diagram $(n-k+1)\geq \lambda_2\geq  \ldots \geq \lambda_k$ in $\mathcal{P}_{k,2n-1}$.  Then
\begin{eqnarray}
 \varphi (\widehat{\sigma_{\tableau[Yp]{ \\ }}} \cdot \widehat{\sigma_\lambda}) &=& \varphi \left(\sum_{\substack{\mu \to \lambda \\  \mu \in \mathcal{P}_{k,2n-1}}} \sigma_\mu + c_{\lambda, \tableau[Yp]{ \\ }}^\lambda \widehat{\sigma_\lambda} \right)\\
&=& \varphi \left(\sum_{\substack{\mu \to \lambda\\ \mu \in \mathcal{P}_{kn}}} \widehat{\sigma_\mu} + c_{\lambda, \tableau[Yp]{ \\ }}^\lambda \widehat{\sigma_\lambda} +\sigma_{\overline{\lambda}} \right)\\
&=& \sum_{\substack{\mu \to \lambda \\ \mu \in \mathcal{P}_{kn}}} \sigma_\mu + c_{\lambda, \tableau[Yp]{ \\ }}^\lambda \sigma_\lambda +q\sigma_{\lambda^-}\\
&=& \sigma_{\tableau[Yp]{ \\ }} \star \sigma_\lambda.
\end{eqnarray} 
\end{proof}

\subsection{One box associativity}\label{S:OneBoxAssoc}

Recall that we denote by $\circ$ the composition of lifting two Schubert classes from $H^*_{T_n}(Gr(k,n))$ to $H^*_{T_{2n-1}}(Gr(k,2n-1))$, multiplying the classes, and then performing $n$-rim hook reduction as in Equation \eqref{E:DefCirc}. 

\begin{theorem}\label{T:BoxAssoc} 
 For any Young diagrams $\lambda$ and $\mu$ in $\mathcal{P}_{kn}$, we have
\begin{equation}\label{E:BoxAssoc}
\left( \sigma_{\tableau[Yp]{ \\ }} \circ \sigma_\lambda\right) \circ \sigma_\mu = \sigma_{\tableau[Yp]{ \\ }} \circ \left( \sigma_\lambda \circ \sigma_\mu \right).
\end{equation}
\end{theorem}

The proof of Theorem \ref{T:BoxAssoc} requires three fairly serious technical results, which we state now and prove later in order not to interrupt the flow of the exposition.

\begin{proposition}\label{L:eqvtCoeff}
Suppose that $\gamma$ rim hook reduces to $\nu$ by removing $d$ rim hooks each of size $n$. Let $c_{\gamma, \tableau[Yp]{ \\ }}^\gamma$ be a coefficient in $H^*_{T_{2n-1}}(Gr(k,2n-1))$ and $c_{\nu, \tableau[Yp]{ \\ }}^\nu$ be a coefficient in $H^*_{T_n}(Gr(k,n))$.  Then \[\varphi(c_{\gamma, \tableau[Yp]{ \\ }}^\gamma)=c_{\nu, \tableau[Yp]{ \\ }}^\nu=c_{\nu, \tableau[Yp]{ \\ }}^{\nu,0} \in \Lambda.\]
In particular, this implies 
\begin{equation}\label{E:PhiOnU}
\varphi\left( \sum\limits_{i \in U(\gamma)}t_i \right) = \sum\limits_{i \in U(\nu)}t_i.
\end{equation}
\end{proposition}

\begin{proposition}\label{T:deltarimhookoptions}
Suppose that $\gamma\in \mathcal{P}_{k,2n-1}$ reduces to the $n$-core $\nu \in \mathcal{P}_{kn}$ by removing $d$ rim hooks.  Then 
\begin{equation}\label{E:phisum}
\sum\limits_{\substack{\delta \to \gamma \\ \delta \in \mathcal{P}_{k,2n-1}}}\varphi(\sigma_{\delta}) = \sum\limits_{\substack{\epsilon \to \nu \\ \epsilon \in \mathcal{P}_{k,n}}}q^d\sigma_{\epsilon} + q^{d+1}\sigma_{\nu^-} =  q^d\sum\limits_{\substack{\epsilon \to \nu \\ \epsilon \in \mathcal{P}_{k,2n-1}}}\varphi(\sigma_{\epsilon}).
\end{equation}
In particular, note that if $\epsilon \to \nu$ and $\nu \in \mathcal{P}_{k,n}$, then the $n$-core of $\epsilon$ equals $\nu^-$ if and only if $\nu_1 = n-k$ and $\nu_i > 0$ for all $1 \leq i \leq k$ and $\epsilon = \overline{\nu}$; otherwise, $\epsilon$ is an $n$-core.
\end{proposition}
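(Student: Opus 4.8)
The plan is to analyze the left-hand side $\sum_{\delta \to \gamma} \varphi(\sigma_\delta)$ by understanding how adding a single box to $\gamma \in \mathcal{P}_{k,2n-1}$ interacts with the $n$-core reduction. The key structural fact I would establish first is that the $n$-cores appearing among the $\delta \to \gamma$ are exactly the $n$-cores appearing among the $\epsilon \to \nu$ (where $\nu$ is the $n$-core of $\gamma$), and that the number of rim hooks removed behaves predictably. I would use the abacus model from Section \ref{S:Abacus}: adding a box to a partition corresponds to moving a single bead on the abacus one position to the right into an adjacent empty slot, and the $n$-core is obtained by pushing all beads as far left as possible within their runners. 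So a covering relation $\delta \to \gamma$ that moves bead $b$ corresponds, after reduction, either to a covering relation $\epsilon \to \nu$ on the core (when the move stays "visible" after left-justification) or to a move that is already accounted for in $\nu$ (contributing an extra factor of $q$ via an $n$-rim hook, or vanishing). The cleanest way to organize this is to match covers of $\gamma$ with covers of $\nu$ via the runner on which the added box sits.

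Concretely, first I would fix the abacus configuration for $\gamma$ and note that removing a rim hook = sliding a bead left by $n$ on its runner; since $\gamma$ reduces to $\nu$ by $d$ such slides, the core $\nu$ has its beads maximally left-justified. Second, for each cover $\delta \to \gamma$, I would track which runner gains a bead-position and show: (i) if after re-justification the resulting partition is still an $n$-core, then $\delta$ reduces to some $\epsilon \to \nu$ with exactly $d$ rim hooks removed, contributing $q^d \sigma_\epsilon$; (ii) the one exceptional cover — the one that creates a new removable $n$-rim hook, which happens precisely when $\nu_1 = n-k$ and all $\nu_i > 0$, corresponding to $\epsilon = \overline{\nu}$ — reduces with $d+1$ rim hooks, contributing $q^{d+1}\sigma_{\nu^-}$; (iii) any remaining cover lands outside $\mathcal{P}_{k,2n-1}$'s core set in a way that $\varphi$ kills, or is already matched. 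The bijection between covers of $\gamma$ and covers of $\nu$ (plus the one $\overline\nu$ term) is the content that makes the middle equality of \eqref{E:phisum} work; the third equality is then just the definition of $\varphi$ applied to $\sum_{\epsilon \to \nu}\sigma_\epsilon$, exactly as in Proposition \ref{T:RimHookPieri}, reading the identity $\varphi(\widehat{\sigma_{\tableau[Yp]{ \\ }}} \cdot \widehat{\sigma_\nu}) = \sigma_{\tableau[Yp]{ \\ }} \star \sigma_\nu$ backwards.

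The parenthetical claim — that the $n$-core of $\epsilon$ equals $\nu^-$ iff $\nu_1 = n-k$, all $\nu_i > 0$, and $\epsilon = \overline{\nu}$ — I would prove directly on the abacus: $\epsilon$ fails to be an $n$-core exactly when adding its box to $\nu$ completes a bead pattern admitting a left-$n$-slide, and a short case check shows the only $n$-core $\nu$ with a cover $\epsilon$ of this type is one whose first runner is "full to the top" (encoding $\nu_1 = n-k$) with no empty low slots (encoding $\nu_i > 0$ for all $i$), in which case the unique such $\epsilon$ is the partition obtained by lengthening the first row, i.e. $\overline{\nu}$, and its core is $\nu$ with an $(n-1)$-rim hook removed, namely $\nu^-$.

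I expect the main obstacle to be step (i)–(iii): precisely the bookkeeping that a cover $\delta \to \gamma$ and the "same" cover $\epsilon \to \nu$ remove the same number $d$ of rim hooks, and that no covers are lost or double-counted under the $d$-fold left-justification. The subtlety is that sliding a bead left by $n$ (removing a rim hook) can in principle change which adjacent slots are empty, so one must verify that the set of legal single-box additions is stable under the reduction in the appropriate sense — this is where the abacus picture does the real work, turning a statement about rim hooks of partitions into a statement about beads on runners, where left-justification and single right-moves visibly commute up to the one $\overline\nu$ correction.
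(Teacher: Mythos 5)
Your strategy is the same as the paper's: encode covers $\delta \to \gamma$ as single bead moves on an abacus (Lemma \ref{T:rightbead}), argue that taking the $n$-core essentially commutes with such a move except for one wrap-around move producing $\overline{\nu}$ (whose core is $\nu^-$), and get the final equality of \eqref{E:phisum} by reading the Pieri computation of Proposition \ref{T:RimHookPieri} backwards. One of the worries you flag is actually free: the number of rim hooks removed from $\delta$ is forced by box counting, since $|\delta| = |\nu| + dn + 1$, so if the core of $\delta$ is a cover $\epsilon$ of $\nu$ it is reached by exactly $d$ hooks, and if it is $\nu^-$ by exactly $d+1$. The real content is elsewhere.

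The genuine gap is the multiplicity statement that you explicitly defer as ``the main obstacle'' without supplying an argument: that each $\epsilon \to \nu$ (and the single $\nu^-$ term) arises from \emph{exactly one} cover $\delta \to \gamma$ surviving $\varphi$, so that the middle expression in \eqref{E:phisum} is matched with correct multiplicities and not merely with correct support. Your sketch shows at most that every term on the right occurs on the left. The paper's proof spends most of its effort here: it needs Lemma \ref{T:beaddist} (if the core lies in $\mathcal{P}_{kn}$ then the $k$ active beads sit on distinct runners --- this is also what justifies your step (iii) that covers whose bead lands on an occupied runner are killed by $\varphi$), and, crucially, Lemma \ref{T:specialAbacus}, which produces a canonical flush abacus for each element of $\mathcal{P}_{kn}$ with all active beads in the row containing $0$ and the last bead at position $n-1$. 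Uniqueness is then proved by contradiction: if $\delta_1, \delta_2 \to \gamma$ had the same core $\epsilon$, one compares their flushings with this canonical abacus, with a separate case analysis for the wrap-around move of the bead on the rightmost runner, split according to whether position $0$ carries an inactive bead in the abacus for $\gamma$; in each case the two flushed abaci are forced to differ in a largest or smallest part, contradicting equality of cores. Until you supply this uniqueness argument (or an equivalent one), your proposal is a correct plan in the paper's spirit but not a proof of the proposition.
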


\begin{proposition}\label{T:FacSchur} In $QH^*_{T_n}(Gr(k,n))$, when $\lambda_1 = n-k$, then we have
$\varphi \left(\sigma_{\overline{\lambda}} \cdot \widehat{\sigma_{\mu}} \right) = q \sigma_{\lambda^-} \circ \sigma_{\mu} .$
\end{proposition}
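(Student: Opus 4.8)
The plan is to reduce the assertion to a single ``model'' congruence in $QH^*_{T_n}(Gr(k,n))$ and then carry that congruence through a product. First note that, by the definition \eqref{E:DefCirc} of $\circ$, the right-hand side is $q\,\sigma_{\lambda^-}\circ\sigma_\mu=q\,\varphi(\widehat{\sigma_{\lambda^-}}\cdot\widehat{\sigma_\mu})$, so the claim says that $\varphi$ sends the product $\sigma_{\overline\lambda}\cdot\widehat{\sigma_\mu}$ computed in $H^*_{T_{2n-1}}(Gr(k,2n-1))$ to $q$ times the $\varphi$-image of $\widehat{\sigma_{\lambda^-}}\cdot\widehat{\sigma_\mu}$. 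Since $\overline\lambda$ is defined only when $\lambda_1=n-k$, that is the standing hypothesis; I would dispose first of the degenerate case $\lambda_k=0$, where $\sigma_{\lambda^-}=0$ and the right side vanishes, so that the content there is that $\varphi$ annihilates $\sigma_{\overline\lambda}\cdot\widehat{\sigma_\mu}$.

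The core step is the single-class congruence
\[
s_{\overline\lambda}(x|t)\ \equiv\ q\,s_{\lambda^-}(x|t)\qquad\text{in }QH^*_{T_n}(Gr(k,n)),
\]
valid when $\lambda_k\geq 1$. To prove it, expand $s_{\overline\lambda}(x|t)$ by the Jacobi--Trudi formula as a $k\times k$ determinant of factorial complete homogeneous polynomials; its first row consists of the degrees $n-k+1,n-k+2,\dots,n$, read left to right. Cofactor-expanding along that first row and invoking the presentation \eqref{eq: qhring}, the degrees $n-k+1,\dots,n-1$ contribute $0$, while the degree-$n$ entry contributes the scalar $(-1)^{k-1}q$; the accompanying minor (delete the first row and last column) is exactly the Jacobi--Trudi determinant of $s_{\lambda^-}(x|t)$, since removing the top row of $\overline\lambda=(n-k+1,\lambda_2,\dots,\lambda_k)$ and one box from each remaining row yields $(\lambda_2-1,\dots,\lambda_k-1)=\lambda^-$. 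Collecting signs gives $s_{\overline\lambda}\equiv(-1)^{1+k}(-1)^{k-1}q\,s_{\lambda^-}=q\,s_{\lambda^-}$; this is consistent with $\varphi(\sigma_{\overline\lambda})=q\,\sigma_{\lambda^-}$, because $\overline\lambda/\lambda^-$ is a border strip of size $n$ occupying all $k$ rows and so contributes the sign $(-1)^{k-k}=+1$.

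The technical obstacle is that the entries of the factorial Jacobi--Trudi determinant are evaluated on \emph{shifted} torus-weight sequences, so the degree-$(n-k+j)$ entry is not literally the ideal generator $h_{n-k+j}(x|t)$, and one cannot simply quote the relations of \eqref{eq: qhring}. Reconciling this is exactly the purpose of the cyclic factorial Schur polynomials of Section \ref{S:FacSchur}: replacing the weight sequence $t$ by its $n$-periodic version $\widetilde t$ (indices taken modulo $n$, representatives $1,\dots,n$) makes the Jacobi--Trudi shifts wrap around harmlessly and turns the relevant entries into the genuine generators $h_{n-k+1}(x|t),\dots,h_n(x|t)$. Along the way one uses that for $\rho\in\mathcal{P}_{kn}$ only the weights $t_1,\dots,t_n$ occur in $s_\rho(x|t)$ (and likewise, since $\overline\lambda_1=n-k+1\le n$, in $s_{\overline\lambda}(x|t)$), so these factorial Schur polynomials are unchanged by $t\mapsto\widetilde t$. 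I expect that verifying the cofactor expansion survives the periodic substitution, tracking every shift carefully, is the main difficulty, and the place where the reduction of torus weights modulo $n$ built into $\varphi$ is genuinely forced.

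Finally I would upgrade the model congruence to the stated product identity. Writing $\sigma_{\overline\lambda}\cdot\widehat{\sigma_\mu}=\sum_\gamma c_{\overline\lambda,\mu}^{\gamma}(t)\,\sigma_\gamma$ in $H^*_{T_{2n-1}}(Gr(k,2n-1))$ and matching factorial Schur representatives yields the polynomial identity $s_{\overline\lambda}(x|t)\,s_\mu(x|t)=\sum_\gamma c_{\overline\lambda,\mu}^{\gamma}(t)\,s_\gamma(x|t)$; applying the ring homomorphism $t\mapsto\widetilde t$ and the observations above rewrites the left side as $s_{\overline\lambda}(x|t)\,s_\mu(x|t)=\sum_\gamma\varphi(c_{\overline\lambda,\mu}^{\gamma})\,\widetilde{s}_\gamma$ in terms of cyclic factorial Schur polynomials, which the dictionary of Section \ref{S:FacSchur} (relating each $\widetilde{s}_\gamma$ to $\varphi(\sigma_\gamma)$ in $QH^*_{T_n}(Gr(k,n))$) identifies with $\varphi(\sigma_{\overline\lambda}\cdot\widehat{\sigma_\mu})$; multiplying the model congruence by $s_\mu(x|t)$ shows this same element equals $q\,s_{\lambda^-}(x|t)\,s_\mu(x|t)$ modulo the quantum ideal, and running the identical bookkeeping on $\widehat{\sigma_{\lambda^-}}\cdot\widehat{\sigma_\mu}$ identifies that with $q\,\varphi(\widehat{\sigma_{\lambda^-}}\cdot\widehat{\sigma_\mu})=q\,\sigma_{\lambda^-}\circ\sigma_\mu$. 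Alternatively, once the model congruence is available one can sidestep the dictionary and instead induct on $|\mu|$, peeling boxes off $\mu$ using the equivariant quantum Pieri rule of Theorem \ref{T:EqQPieri}, the already-established Proposition \ref{T:RimHookPieri}, and the classical equivariant Pieri rule in $Gr(k,2n-1)$.
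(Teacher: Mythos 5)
Your overall architecture is sound and close to the paper's: the paper likewise reduces the statement, via the factorial Schur dictionary (Proposition \ref{T: nCores} and the commutative diagrams of Section \ref{S:FacSchur}), to a congruence modulo the quantum ideal $J$, namely Proposition \ref{T:AlgProd}. Where you differ is at the product level: you propose to prove only the single-class congruence $s_{\overline{\lambda}}(x|t)\equiv q\,s_{\lambda^-}(x|t)\bmod J$ and then multiply by $s_\mu(x|t)$, using that $J$ is an ideal, whereas the paper works directly with $\det(M_{\overline{\lambda}}M_\mu)$ and $\det(M_{\lambda^-}M_\mu)$, expanding along the first (resp.\ last) row. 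Your shortcut is legitimate --- the paper proves exactly your model congruence inside the proof of Proposition \ref{T: nCores}(d), following Mihalcea --- and your cofactor and sign bookkeeping (degree-$n$ entry contributing $(-1)^{k-1}q$, cofactor sign $(-1)^{1+k}$, and the deleted-row-and-column minor being the Jacobi--Trudi determinant of $\lambda^-$) checks out. Be aware, though, that the ``dictionary'' step in your final paragraph is not a formality: $\varphi$ is only a module map defined on the Schubert/Schur basis, not a ring homomorphism (the mod-$n$ reduction of the ideal presenting $H^*_{T_{2n-1}}(Gr(k,2n-1))$ does not land inside $J$; e.g.\ $h_{2n-k}$ reduces to $\pm q\,h_{n-k}\not\equiv 0$), so identifying $\varphi(\sigma_{\overline{\lambda}}\cdot\widehat{\sigma_\mu})$ with the class of $s_{\overline{\lambda}}(x|t)s_\mu(x|t)$ in $R$ genuinely requires Proposition \ref{T: nCores}, or an argument that every $\gamma$ occurring in the expansion already lies in $\mathcal{P}_{k,2n-1}$.

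The genuine gap is the step you yourself flag as ``the main difficulty'' and then do not carry out: the entries of the factorial Jacobi--Trudi matrix are $h_m(x|\tau^{1-j}t)$ at shifted weight sequences, and you assert without proof that after the $n$-periodic substitution they become, or behave like, the generators of $J$. This is exactly the paper's Lemma \ref{lem: quantuminvariance}: in $R$ one has $h_m(x|\tau^{-s}t)\equiv 0\bmod J$ for $n-k<m<n$ and $h_n(x|\tau^{-s}t)\equiv q\bmod J$ for every shift $s$. This is not a literal polynomial identity --- the shifted entry agrees with the unshifted generator only modulo $J$ --- and its proof is an induction on the degree using the recursion $h_m(x|\tau^{-s}t)=h_m(x|\tau^{-s+1}t)+(t_{m+k-s}-t_{-s+1})\,h_{m-1}(x|\tau^{-s+1}t)$, where the base case $m=n-k+1$ works because the coefficient $t_{n-s+1}-t_{-s+1}$ vanishes after the mod-$n$ reduction (this is indeed where the cyclic reduction is forced, as you guessed). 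Without this lemma neither your cofactor expansion nor your model congruence is established, so it must be proved or cited explicitly. Finally, your alternative of inducting on $|\mu|$ via Pieri rules is risky: Proposition \ref{T:FacSchur} is an input to the proof of associativity of $\circ$, so an induction manipulating $\circ$-products of general classes before associativity is available would need care to avoid circularity.
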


The proof of Propositions \ref{L:eqvtCoeff} and \ref{T:deltarimhookoptions} require the use of abacus diagrams, which we discuss in Section \ref{S:Abacus}.  Proving Proposition \ref{T:FacSchur} inspired the authors to develop a new polynomial model for equivariant quantum cohomology, which we call \textit{cyclic factorial Schur polynomials}.  Cyclic factorial Schur fuctions are discussed in Section \ref{S:FacSchur}.  Assuming these propositions for the moment, we will now proceed with the proof of one box associativity.

\begin{proof}[Proof of Theorem \ref{T:BoxAssoc}]  First we establish some notation.  Suppose that $\lambda, \mu \in \mathcal{P}_{kn}$.  We will write 
\begin{equation}\label{E:lambdamuprod}
\widehat{\sigma_{\lambda}} \cdot \widehat{\sigma_{\mu}} = \sum\limits_{\gamma \in \mathcal{P}_{k,2n-1}}  \clmg \sigma_{\gamma},
\end{equation}
and then define the coefficients $\widetilde{c_{\lambda, \mu}^{\nu, d}}$ via rim hook reduction as 
\begin{equation}\label{E:gammareduction} \varphi \left( \sum\limits_{\gamma \in \mathcal{P}_{k,2n-1}} \clmg \sigma_{\gamma} = \right)  = \sum\limits_{\substack{\nu \in \mathcal{P}_{kn}\\ d \ge 0}}\widetilde{c_{\lambda, \mu}^{\nu, d}}q^d\sigma_{\nu}.
\end{equation} 
We will do a direct calculation, using the two previously stated crucial lemmas, in order to prove the desired equality.  We have
\begin{align}
\nonumber \phantom{a} &   \left(\sigma_{\tableau[Yp]{\\}} \circ \sigma_{\lambda}\right) \circ \sigma_{\mu} 
&
\\
\nonumber
& = \left( \sum\limits_{\substack{\delta \to \lambda \\ \delta \in \mathcal{P}_{kn}}} \sigma_{\delta}  + q\sigma_{\lambda^-} + c_{\lambda, \tableau[Yp]{ \\ }}^\lambda \sigma_{\lambda}\right) \circ \sigma_{\mu}
& \text{Prop. \ref{T:RimHookPieri} \& Thm. \ref{T:EqQPieri}}
\\ 
\nonumber 
& =  \sum\limits_{\substack{\delta \to \lambda \\ \delta \in \mathcal{P}_{kn}}} \sigma_{\delta} \circ \sigma_{\mu} + q \sigma_{\lambda^-} \circ \sigma_{\mu} + c_{\lambda, \tableau[Yp]{ \\ }}^\lambda \sigma_{\lambda} \circ \sigma_{\mu} 
\\
\nonumber
& = \sum\limits_{\substack{\delta \to \lambda \\ \delta \in \mathcal{P}_{kn}}} \sigma_{\delta} \circ \sigma_{\mu} + \varphi \left( \sigma_{\bar{\lambda}} \cdot \sigma_{\mu}\right) + c_{\lambda, \tableau[Yp]{ \\ }}^\lambda \sigma_{\lambda} \circ \sigma_{\mu}. 
& \text{Prop. \ref{T:FacSchur}}.
\end{align}
Note that this middle term equals zero if $\lambda^-$ does not exist, so this last step is nontrivial only when $\lambda_1=n-k$. Now since $\mu \in \mathcal{P}_{kn}$, then we may replace $\sigma_{\mu}$ by $\widehat{\sigma_{\mu}}$ to obtain 
\begin{align}
\nonumber
 \phantom{a} & \sum\limits_{\substack{\delta \to \lambda \\ \delta \in \mathcal{P}_{kn}}} \sigma_{\delta} \circ \sigma_{\mu} + \varphi \left( \sigma_{\bar{\lambda}} \cdot \widehat{\sigma_{\mu}}\right) + c_{\lambda, \tableau[Yp]{ \\ }}^\lambda \sigma_{\lambda} \circ \sigma_{\mu} \\
 &
 \nonumber
= \varphi \left( \sum_{\substack{\delta \to \lambda \\ \delta \in \mathcal{P}_{kn}}} \widehat{\sigma_\delta} \cdot \widehat{\sigma_{\mu}}\right) + \varphi \left( \sigma_{\bar{\lambda}} \cdot \widehat{\sigma_{\mu}}\right)  + \varphi \left( c_{\lambda, \tableau[Yp]{ \\ }}^\lambda \widehat{\sigma_{\lambda}} \cdot \widehat{\sigma_{\mu}} \right) 
\\
\nonumber
\phantom{a} &
 = \varphi \left( \left (\sum_{\substack{\delta \to \lambda \\ \delta \in \mathcal{P}_{k,2n-1}}}\sigma_\delta + c_{\lambda,\tableau[Yp]{ \\ }}^\lambda \widehat{\sigma_\lambda} \right) \cdot \widehat{\sigma_\mu} \right) 
\\
\nonumber
&=\varphi \left( ( \widehat{\sigma_{\tableau[Yp]{ \\ }}} \cdot \widehat{\sigma_\lambda} ) \cdot \widehat{\sigma_\mu} \right).
&
\end{align}

Multiplication in the classical cohomology ring $H^*_T(Gr(k,2n-1))$ is associative, and so here we may write 

\begin{align}
\nonumber
& \phantom{=} \varphi \left( ( \widehat{\sigma_{\tableau[Yp]{ \\ }}} \cdot \widehat{\sigma_\lambda} ) \cdot \widehat{\sigma_\mu} \right)
&
\\
\nonumber
& = \varphi \left( \widehat{\sigma_{\tableau[Yp]{ \\ }}} \cdot ( \widehat{\sigma_\lambda} \cdot \widehat{\sigma_\mu} ) \right)
&  
\\
\nonumber
& = \varphi \left( \widehat{\sigma_{\tableau[Yp]{\\}}} \cdot \sum\limits_{\gamma \in \mathcal{P}_{k,2n-1}}c_{\lambda,\mu}^{\gamma} \sigma_{\gamma} \right) 
& \text{Eq. \eqref{E:lambdamuprod}}
\\
\nonumber
& =  \varphi \left( \sum\limits_{\gamma \in \mathcal{P}_{k,2n-1}} c_{\lambda,\mu}^{\gamma} (\widehat{\sigma_{\tableau[Yp]{\\}}} \cdot \sigma_{\gamma} )\right) 
&
\\
\nonumber
& =  \varphi \left( \sum\limits_{\gamma\in \mathcal{P}_{k,2n-1}}c_{\lambda,\mu}^{\gamma} \left(\sum_{\substack{\delta \to \gamma \\ \delta \in \mathcal{P}_{k,2n-1}}} \sigma_\delta+ c_{\gamma,\; \tableau[Yp]{ \\ }}^\gamma \sigma_{\gamma} \right) \right) 
& \text{Thm. \ref{T:EqQPieri}}
\\
\nonumber
& =  \varphi \left( \sum\limits_{\gamma\in \mathcal{P}_{k,2n-1}}c_{\lambda,\mu}^{\gamma} \sum_{\substack{\delta \to \gamma \\ \delta \in \mathcal{P}_{k,2n-1}}} \sigma_\delta \right) + \varphi \left(  \sum\limits_{\gamma\in \mathcal{P}_{k,2n-1}}c_{\lambda,\mu}^{\gamma} c_{\gamma,\; \tableau[Yp]{ \\ }}^\gamma \sigma_{\gamma} \right)
&
\\
\nonumber
& =  \varphi \left( \sum\limits_{\gamma\in \mathcal{P}_{k,2n-1}}c_{\lambda,\mu}^{\gamma} \sum_{\substack{\delta \to \gamma \\ \delta \in \mathcal{P}_{k,2n-1}}} \sigma_\delta \right)  +  \sum_{\nu \in \mathcal{P}_{kn}, d} \widetilde{c_{\lambda, \mu}^{\nu, d}} q^d c_{\nu,\; \tableau[Yp]{ \\ }}^{\nu}\sigma_{\nu} 
& \text{Prop. \ref{L:eqvtCoeff} \& Eq. \eqref{E:gammareduction}}
\\
\nonumber
&= \sum_{\nu \in \mathcal{P}_{kn}, d} \widetilde{c_{\lambda, \mu}^{\nu, d}} q^d \varphi \left( \sum_{\substack{\epsilon \to \nu \\ \epsilon \in \mathcal{P}_{k,2n-1}}} \sigma_\epsilon   \right) +  \sum_{\nu \in \mathcal{P}_{kn}, d} \widetilde{c_{\lambda, \mu}^{\nu, d}} q^d c_{\nu,\; \tableau[Yp]{ \\ }}^{\nu}\sigma_{\nu}
& \text{Prop. \ref{T:deltarimhookoptions}}
\\
\nonumber
& 
= \sum_{\nu \in \mathcal{P}_{kn}, d} \widetilde{c_{\lambda, \mu}^{\nu, d}} q^d \varphi \left( \sum\limits_{\substack{\epsilon \rightarrow \nu \\ \epsilon \in \mathcal{P}_{k,2n-1} }} \sigma_{\epsilon} + c_{\nu, \tableau[Yp]{\\}}^{\nu} \widehat{\sigma_{\nu}}  \right) 
\\
\nonumber
&= \sum_{\nu \in \mathcal{P}_{kn}, d} \widetilde{c_{\lambda, \mu}^{\nu, d}} q^d \varphi \left(\widehat{\sigma_{\tableau[Yp]{ \\ }}} \cdot \widehat{\sigma_{\nu}} \right) 
& \text{Thm. \ref{T:EqQPieri}}
\end{align}

\begin{align}
\nonumber
&= \sum_{\nu \in \mathcal{P}_{kn}, d} \widetilde{c_{\lambda, \mu}^{\nu, d}} q^d \left(\sigma_{\tableau[Yp]{ \\ }} \circ \sigma_{\nu} \right) 
& \text{}
\\
\nonumber
&=  \sigma_{\tableau[Yp]{ \\ }} \circ \sum_{\nu \in \mathcal{P}_{kn}, d} \widetilde{c_{\lambda, \mu}^{\nu, d}} q^d \sigma_{\nu} 
& \text{}
\\
\nonumber
& = \sigma_{\tableau[Yp]{ \\ }} \circ \varphi \left( \sum\limits_{\gamma \in \mathcal{P}_{k, 2n-1}} c_{\lambda, \mu}^{\gamma} \sigma_{\gamma} \right)
& \text{Eq. \eqref{E:gammareduction}}
\\
\nonumber
& = \sigma_{\tableau[Yp]{ \\ }} \circ (\sigma_\lambda \circ \sigma_\mu).
& \text{Eq. \eqref{E:lambdamuprod}}
\end{align} 

\noindent Altogether we have thus shown that $\left( \sigma_{\tableau[Yp]{ \\ }} \circ \sigma_\lambda\right) \circ \sigma_\mu = \sigma_{\tableau[Yp]{ \\ }} \circ \left( \sigma_\lambda \circ \sigma_\mu \right),$ as desired.
\end{proof}

\begin{proof}[Proof of Theorem \ref{T:MainTheorem}]
Consider the graded, commutative $\Lambda[q]$-algebra $(A, \circ )$ with an additive basis of Schubert classes $\sigma_\lambda$ for all $\lambda \in \mathcal{P}_{kn}$ and product defined by $\sigma_{\lambda} \circ \sigma_{\mu} = \varphi(\widehat{\sigma_{\lambda}} \cdot \widehat{\sigma_{\mu}})$.  By Proposition \ref{T:EqQPieri}, the equivariant quantum Pieri rule holds in $(A, \circ )$.  Theorem \ref{T:BoxAssoc} says that multiplication by the class of a single box is associative.  Therefore, by Theorem \ref{T:OneBoxPlusPieri}, the algebra $(A, \circ )$ is canonically isomorphic to $QH^*_{T_n}(Gr(k,n))$.
\end{proof}


\section{Abacus diagrams and core partitions}\label{S:Abacus}

\subsection{Abacus diagrams, partitions, and $n$-cores}

This section is devoted to the proof of the first two key propositions in the proof of one box associativity, Propositions \ref{L:eqvtCoeff} and \ref{T:deltarimhookoptions}.  The main tool for proving these propositions is the abacus model for a Young diagram; see Section 2.7 in \cite{JamesKerber} for more details.  The abacus model links covers in Young's lattice to the $n$-cores in $\mathcal{P}_{kn}$, the two key players in Propositions \ref{L:eqvtCoeff} and \ref{T:deltarimhookoptions}.

\begin{definition}
 An \textbf{abacus} is an arrangement of the integers into $n$ columns called \textbf{runners}, together with a placement of \textbf{beads} on the integers. The integers are written in order, from left to right and top to bottom, so that each runner is labelled by an equivalence class of the integers modulo $n$. Our convention is to place zero on the left-most runner.  The beads satisfy the condition that there exists an integer $N$ so that there is a bead on every integer before $-N$ and no beads after $N$.  A \textbf{gap} is any non-beaded integer $g$ which precedes some beaded integer $b \geq g$. Note that consecutive non-beaded integers count as multiple gaps, not just one. A bead is said to be \textbf{active} if there exist gaps preceding it.   In our context, only the last $k$ beads are permitted to be active, as this paper only concerns $Gr(k,n)$.  
\end{definition}

Figure \ref{fig:abacus} illustrates two examples of abacus diagrams on $n=3$ runners. The left abacus has gaps at 2 and 4 with active beads on 3 and 5 , while the right abacus has no active beads.

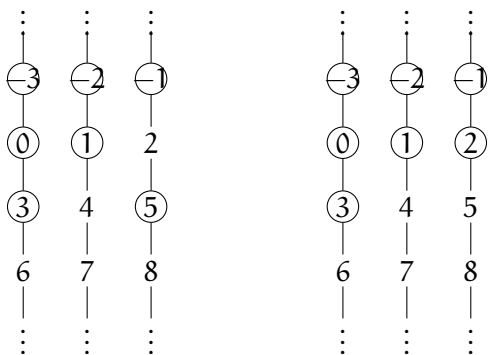
\begin{figure}[h]
\begin{center}
\begin{tikzpicture}[scale=0.85]
\draw (0,-1) node{$\vdots$};
\draw (0,-.75)--(0,-.25);
\draw (0,0) node{$6$};
\draw(0,.25)--(0,.75);
\draw (0,1) circle(7pt) node{$3$};
\draw(0,1.25)--(0,1.75);
\draw (0,2)  circle(7pt) node{$0$};
\draw (0,2.25)--(0,2.75);
\draw (0,3) circle(7pt) node{$-3$};
\draw (0,3.25)--(0,3.75);
\draw (0,4) node{$\vdots$};
\draw (1,-1) node{$\vdots$};
\draw (1,-.75)--(1,-.25);
\draw (1,0) node{$7$};
\draw (1,.25)--(1,.75);
\draw (1,1) node{$4$};
\draw (1,1.25)--(1,1.75);
\draw (1,2) circle(7pt) node{$1$};
\draw(1,2.25)--(1,2.75);
\draw (1,3) circle(7pt) node{$-2$};
\draw (1,3.25)--(1,3.75);
\draw (1,4) node{$\vdots$};
\draw (2,-1) node{$\vdots$};
\draw (2,-.75)--(2,-.25);
\draw (2,0) node{$8$};
\draw (2,.25)--(2,.75);
\draw (2,1) circle(7pt) node{$5$};
\draw (2,1.25)--(2,1.75);
\draw (2,2) node{$2$};
\draw(2,2.25)--(2,2.75);
\draw (2,3) circle(7pt) node{$-1$};
\draw (2,3.25)--(2,3.75);
\draw (2,4) node{$\vdots$};
\draw (5,-1) node{$\vdots$};
\draw (5,-.75)--(5,-.25);
\draw (5,0) node{$6$};
\draw(5,.25)--(5,.75);
\draw (5,1) circle(7pt) node{$3$};
\draw(5,1.25)--(5,1.75);
\draw (5,2)  circle(7pt) node{$0$};
\draw (5,2.25)--(5,2.75);
\draw (5,3) circle(7pt) node{$-3$};
\draw (5,3.25)--(5,3.75);
\draw (5,4) node{$\vdots$};
\draw (6,-1) node{$\vdots$};
\draw (6,-.75)--(6,-.25);
\draw (6,0) node{$7$};
\draw (6,.25)--(6,.75);
\draw (6,1) node{$4$};
\draw (6,1.25)--(6,1.75);
\draw (6,2) circle(7pt) node{$1$};
\draw(6,2.25)--(6,2.75);
\draw (6,3) circle(7pt) node{$-2$};
\draw (6,3.25)--(6,3.75);
\draw (6,4) node{$\vdots$};
\draw (7,-1) node{$\vdots$};
\draw (7,-.75)--(7,-.25);
\draw (7,0) node{$8$};
\draw (7,.25)--(7,.75);
\draw (7,1) node{$5$};
\draw (7,1.25)--(7,1.75);
\draw (7,2) circle(7pt) node{$2$};
\draw(7,2.25)--(7,2.75);
\draw (7,3) circle(7pt) node{$-1$};
\draw (7,3.25)--(7,3.75);
\draw (7,4) node{$\vdots$};
\end{tikzpicture}
\caption{An abacus for $\lambda=(2,1)$ on the left, and an abacus for $\lambda=(0,0)$ on the right.}
\label{fig:abacus}
\end{center}
\end{figure}

To obtain the Young diagram $\lambda$ corresponding to an abacus $\mathcal{A}$, we define its parts  by counting the number of gaps before each of the last $k$ beads.  More precisely, $\lambda_i$ is the number of gaps before the bead which has exactly $i-1$ beads after it, allowing us to construct a partition $\lambda=(\lambda_1, \ldots, \lambda_k)$. For example, counting the number of gaps before each of the two active beads in the left abacus in Figure \ref{fig:abacus} gives $\lambda=(2,1)$.  Note that if $m$ of the last $k$ beads are inactive, as in the right abacus in Figure \ref{fig:abacus}, then the last $m$ parts of $\lambda$ will equal 0. Conversely, one way to create an abacus $\mathcal{A}$ from a partition $\lambda = (\lambda_1, \dots, \lambda_k)$ is to place a bead on every negative integer, and then for $i$ from $1$ to $k$, place a bead on location $\lambda_{k-i+1}+i-1$.

\begin{remark}
There are many abaci which correspond to the same partition $\lambda$. For example, constructing an abacus for the partition $\lambda=(2,1)$ by beading the negative integers and placing beads at $\lambda_{k-i+1}+i-1$ as just described, gives a different abacus than the one shown in Figure \ref{fig:abacus}. Similarly, any abacus in which we translate all beads on the abacus vertically by the same amount produces the same partition. Often \emph{any} abacus diagram corresponding to a fixed partition will suffice for our purposes, but later we do refine this construction in order to make the correspondence one-to-one; see Lemma \ref{T:specialAbacus} below, which constructs a unique preferred abacus diagram for elements of $\mathcal{P}_{kn}$.
\end{remark}

We can now translate much of the requisite combinatorics on partitions into the language of abaci.  For example, the following lemma describes in terms of abacus diagrams the covering relation in Young's lattice given by adding a single box to a partition.

\begin{lemma}\label{T:rightbead}
When $\lambda=(\lambda_1, \ldots, \lambda_k)$ and $\lambda'= (\lambda_1, \ldots, \lambda_{i-1},\lambda_i+1, \lambda_{i+1}, \ldots, \lambda_k)$ are both valid Young diagrams, an abacus for $\lambda'$ can be obtained from an abacus for $\lambda$ by moving a bead on the abacus for $\lambda$ to the next integer.
\end{lemma}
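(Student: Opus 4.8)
The plan is to read the lemma off the explicit partition-to-abacus recipe recalled in Section~\ref{S:Abacus}. That recipe builds from $\lambda=(\lambda_1,\dots,\lambda_k)$ an abacus whose $k$ \emph{active} beads sit at the integers $b_j=\lambda_{k-j+1}+j-1$ for $j=1,\dots,k$, above a solid block of beads occupying all sufficiently negative integers; these values are strictly increasing in $j$, and the $j$-th active bead (counted upward from the smallest) records the part $\lambda_{k-j+1}$, so that reading the gaps back off recovers $\lambda$. I would carry out the argument with this abacus and remark at the end that the choice is immaterial.

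The first step is to pin down which bead moves. Forming $\tableau[Yp]{ \\ }(\lambda)$ means replacing $\lambda_i$ by $\lambda_i+1$ for the row $i$ in question, which under the active-bead labelling is the part recorded by bead $j:=k-i+1$. Since $b_\ell$ for $\ell\ne j$ depends only on parts other than $\lambda_i$, all other beads — active ones and the solid block alike — are unchanged, while bead $j$ moves from the integer $p:=\lambda_i+k-i$ to $p+1$; the resulting array is exactly the recipe applied to $\tableau[Yp]{ \\ }(\lambda)$, hence is a genuine abacus for it. One should also check that this push is legal, i.e.\ that $p+1$ is unoccupied in the abacus for $\lambda$: this uses the hypothesis that $\tableau[Yp]{ \\ }(\lambda)$ is a valid Young diagram, for when $i>1$ it forces $\lambda_{i-1}\ge\lambda_i+1$, equivalently $b_{j+1}\ge b_j+2$, so $p+1$ falls strictly between the beads $b_j$ and $b_{j+1}$, and when $i=1$ the integer $p+1$ simply lies above the topmost active bead; in either case $p+1$ is a gap.

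The second step is to translate the move $p\mapsto p+1$ into the geometry of the runners. With $n$ runners and $0$ on the leftmost runner, the integer $m$ lies in column $m\bmod n$ (representative in $\{0,1,\dots,n-1\}$) and row $\lfloor m/n\rfloor$. If $p\not\equiv n-1\pmod n$, then $p$ and $p+1$ occupy the same row with $p+1$ one column to the right, so the bead moves one runner to the right. If $p\equiv n-1\pmod n$, then $p$ is on the rightmost runner while $p+1\equiv 0$ is on the leftmost runner of the next row down, which is precisely the wrap-around described in the statement. Finally, every other abacus for $\lambda$ is obtained from the recipe abacus by adding a fixed multiple of $n$ to all bead positions; this alters neither $p\bmod n$ nor the relative placement of the moved bead, so the same description holds for an arbitrary abacus of $\lambda$.

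The only real obstacle I anticipate is the index bookkeeping: a partition is written with the largest part first whereas active beads are naturally counted from the smallest part, so one must consistently identify ``add a box in row $i$'' with ``move active bead $k-i+1$'' and track the effect on $b_j=\lambda_{k-j+1}+j-1$. Once that correspondence is fixed, each step above follows in one line from the recipe and the convention placing $0$ on the leftmost runner.
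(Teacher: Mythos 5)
Your proof is correct and follows essentially the same route as the paper's: both identify adding a box in row $i$ with advancing the corresponding active bead one position along the integer labeling (hence one runner to the right, or the wrap to the leftmost runner one row down), with the validity of the enlarged diagram guaranteeing that the target position is a gap. One small inaccuracy in your closing remark: the abaci representing $\lambda$ are the translates of the recipe abacus by an \emph{arbitrary} fixed integer, not only by multiples of $n$; this does not harm your conclusion, since moving any bead from position $p$ to $p+1$ is always either a one-runner shift to the right or the wrap-around move, whatever $p \bmod n$ is.
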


\begin{proof}
The number of boxes in row $i$ of a Young diagram $\lambda$ corresponds to the number of gaps before the $i^{th}$ from the last active bead in an abacus for $\lambda$.  We can add a box in the $i^{th}$ row if and only if there are at least $\lambda_i+1$ boxes in the $(i+1)^{th}$ row of $\lambda$.  This is the case if and only if there is a gap between the $i^{th}$ and $(i+1)^{th}$ active bead in the abacus for $\lambda$.  
\end{proof}

\begin{definition}
An abacus is called \textbf{flush} if each bead has another bead directly above it.    
\end{definition}

For example, the abacus on the left in Figure \ref{fig:abacus} is not flush, but the abacus on the right in Figure \ref{fig:abacus} is flush.  The abacus shown for $\lambda = (0,0)$ is obtained by making the abacus for $\lambda = (2,1)$ in Figure \ref{fig:abacus} flush by moving a bead upwards from 5 to 2. As the following result shows, this corresponds to removing a 3-rim hook from the partition $(2,1)$, resulting in the partition $(0,0)$.

\begin{theorem}[Lemma 2.7.13 and Theorem 2.7.16 \cite{JamesKerber}]\label{T:CoreBead}
$\lambda$ is an $n$-core if and only if every (equivalently any) abacus corresponding to $\lambda$ is flush.  Additionally, removing a single $n$-rim hook from $\lambda$ corresponds to moving one bead up one row on an abacus runner.  
\end{theorem}

We now present two lemmas which use this connection between $n$-cores and abacus diagrams. Lemma \ref{T:beaddist} gives a necessary criterion on abaci for a partition to have $n$-core in $\mathcal{P}_{kn}$, and Lemma \ref{T:specialAbacus} shows that it is possible to choose a unique preferred  abacus diagram for elements of $\mathcal{P}_{kn}$.   We remark that Lemma \ref{T:specialAbacus} is part of a more general phenomenon, and we refer the interested reader to the discussion of balanced, flush abaci in \cite{JamesKerber}.

\begin{lemma}\label{T:beaddist}
If a Young diagram $\gamma$ of at most $k$ parts has $n$-core in $\mathcal{P}_{kn}$ then any abacus for $\gamma$ has each of its last $k$ beads on distinct runners. 
\end{lemma}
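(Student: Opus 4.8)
I would argue the contrapositive: if some runner of an abacus for $\gamma$ carries two (or more) active beads, then the $n$-core $\nu$ of $\gamma$ satisfies $\nu_1 > n-k$, hence $\nu \notin \mathcal{P}_{kn}$ (note $\nu$ automatically has at most $k$ parts, since removing rim hooks never increases the number of parts, so the only constraint that can fail is $\nu_1 \le n-k$). Translating all beads of an abacus by a fixed integer only permutes the runners cyclically, and any two abaci for $\gamma$ with $n$ runners differ by such a translation, so the statement is independent of the chosen abacus and I may work with the \emph{normalized} abacus $\mathcal{A}$ of $\gamma$, the one whose lowest gap sits at position $0$. Then every integer $<0$ is beaded, position $0$ is a gap, the active beads are exactly the beads at positive positions, and their number equals the number $j$ of nonzero parts of $\gamma$, with $j\le k$. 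For $r\in\{0,1,\dots,n-1\}$ let $A_r\ge 0$ be the number of active beads of $\mathcal{A}$ on runner $r$, so $\sum_r A_r = j$; the active beads lie on distinct runners precisely when $A_r\le 1$ for all $r$.

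Next I would make $\nu$'s abacus explicit. By the quoted characterization of $n$-cores, $\nu$ is obtained from $\mathcal{A}$ by flushing, i.e. by pushing every bead up on its runner as far as possible; this merely compacts the $A_r$ active beads of runner $r$ down onto its bulk of negative beads. Since the largest negative position on runner $r$ is $r-n$, the flushed runner $r$ consists of all positions $x\equiv r\pmod n$ with $x\le t_r:=(r-n)+A_rn = r+(A_r-1)n$; in particular all positions $<0$ stay beaded, the total number of beads at positions $\ge 0$ is still $\sum_r A_r = j$, and the largest bead of $\nu$ is $M:=\max_r t_r$. Reading the first part of $\nu$ off this abacus, and using that all $j$ beads at positions $\ge 0$ are $\le M$ while $M$ itself is a bead,
\[
\nu_1 \;=\; \#\{\text{gaps at positions }<M\} \;=\; M-(j-1) \;=\; M+1-j .
\]

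The decisive point is then purely numerical: if $A_r\le 1$ then $t_r = r+(A_r-1)n\le n-1$, while if $A_r\ge 2$ then $t_r = r+(A_r-1)n\ge n$. Hence, as soon as some runner has $A_{r_0}\ge 2$, the maximizing runner $r^{\ast}$ (with $M=t_{r^\ast}$) must also satisfy $A_{r^\ast}\ge 2$, because a runner with $A_r\le 1$ has $t_r\le n-1 < n\le t_{r_0}$ and so cannot be maximal. Therefore $M = t_{r^\ast}\ge n$, and
\[
\nu_1 \;=\; M+1-j \;\ge\; n+1-j \;\ge\; n+1-k \;>\; n-k ,
\]
so $\nu\notin\mathcal{P}_{kn}$. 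Taking the contrapositive gives the lemma.

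The step I expect to need the most care is the passage from $\mathcal{A}$ to the abacus of $\nu$: one must check that flushing simply stacks the active beads of runner $r$ at positions $r, r+n,\dots, r+(A_r-1)n$ and disturbs nothing else (in particular no bead crosses below position $0$, and no gap is created below $0$), which is what legitimizes the clean identity $\nu_1=M+1-j$; everything after that is bookkeeping, and the case $j<k$ is absorbed harmlessly by the inequality $\nu_1\ge n+1-j\ge n+1-k$. As a consistency check, the same computation yields the converse needed elsewhere: an $n$-core $\nu$ with $j\le k$ active beads lies in $\mathcal{P}_{kn}$ if and only if every $A_r\le 1$, i.e. if and only if its active beads lie on distinct runners.
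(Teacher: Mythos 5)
Your computation of the core via flushing is sound, and the identity $\nu_1 = M+1-j$ is correct; but as written the argument proves a weaker statement than the lemma, because you have quietly changed what ``active bead'' means. In the paper's convention the active beads are the \emph{last $k$ beads} of the abacus, including the $k-j$ beads that correspond to zero parts when $\gamma$ has only $j<k$ nonzero parts; in your normalized abacus these are the beads at $-1,-2,\dots,-(k-j)$ together with the $j$ beads at positive positions. The lemma asserts that all $k$ of these lie on distinct runners, and this full strength is what is used later (in Lemma \ref{T:specialAbacus}, where $\nu\in\mathcal{P}_{kn}$ may well have zero parts, and in the case analysis in the proof of Proposition \ref{T:deltarimhookoptions} involving the last active bead and the bead at location $0$). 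Your contrapositive only treats the event ``some runner carries two of the $j$ positive beads'' ($A_r\ge 2$), so it never engages with the configuration in which a positive bead shares a runner with one of the trailing beads of the flush block. This configuration is not vacuous: for $n=4$, $k=2$, $\gamma=(7,0)$ the normalized abacus has beads at all negative integers plus one bead at $7$; the two active beads sit at $7$ and $-1$, both on runner $3$, while every $A_r\le 1$, so your hypothesis is never triggered even though the lemma must (and does) say something about such $\gamma$.

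The gap is fixable with the machinery you already set up, which is why I flag it as missing rather than wrong. From $\nu_1=M+1-j$, membership $\nu\in\mathcal{P}_{kn}$ forces $M\le n-k+j-1$, i.e.\ every positive bead lies on a runner $r\le n-k+j-1$; these runners are disjoint from the runners $n-1,n-2,\dots,n-(k-j)$ occupied by the beads at $-1,\dots,-(k-j)$, and those beads are at consecutive positions and hence pairwise on distinct runners. Combined with $A_r\le 1$ (which your inequality $M\ge n$ rules out otherwise), this gives distinctness of all $k$ active beads, and in fact the clean characterization: the core lies in $\mathcal{P}_{kn}$ if and only if the $k$ last beads occupy $k$ distinct runners. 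For comparison, the paper avoids the computation of $\nu_1$ altogether: it first checks the claim directly when $\gamma\in\mathcal{P}_{kn}$ (at most $n-k$ gaps and $k$ beads force the first and last active beads to be within $n-1$ positions of each other), and then observes that flushing never changes the runner of any bead, so the property passes from the core back to $\gamma$; your route, once repaired, is longer but yields the quantitative statement $\nu_1=M+1-j$ as a bonus.
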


\begin{proof}
First suppose that $\gamma$ is itself a Young diagram in $\mathcal{P}_{kn}$, and hence an $n$-core. Assume one of the last $k$ beads occurs on the same runner as another of the last $k$ beads. With at most $k-2$ remaining beads between them, we know that there are $n-1-(k-2) = n-k+1$ gaps between them, giving a partition of width greater than $n-k$. This contradicts the assumption that the partition was in $\mathcal{P}_{kn}$.

If $\gamma \notin \mathcal{P}_{kn}$, we first observe by Theorem \ref{T:CoreBead} that taking an $n$-core does not change the runners on which the active beads of an abacus appear.  Therefore, if $\gamma$ has $n$-core in $\mathcal{P}_{kn}$, the active beads on $\gamma$'s abacus must also be on distinct runners, and therefore so must the last $k$ beads.  
\end{proof}

\begin{lemma}\label{T:specialAbacus}
If $\nu \in \mathcal{P}_{kn}$ then there is a unique abacus for $\nu$ with the last $k$ beads in the row containing $0$ and the last bead on integer $n-1$.  Further, such an abacus must necessarily have inactive beads at every integer $j$ for $j \le -1$.  
\end{lemma}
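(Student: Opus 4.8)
The plan is to exhibit the special abacus by an explicit formula and then prove existence and uniqueness by direct gap-counting. Given $\nu = (\nu_1 \geq \cdots \geq \nu_k \geq 0)$ in $\mathcal{P}_{kn}$, I would set $c = n - k - \nu_1 - 1$ and $b_j = \nu_{k-j+1} + c + j$ for $j = 1, \ldots, k$, and take $\mathcal{A}_\nu$ to be the abacus with a bead at every integer $\leq c$ together with beads at $b_1 < \cdots < b_k$. Since $0 \leq \nu_1 \leq n-k$ one has $c \geq -1$, which is exactly the final assertion of the lemma (there are then inactive beads at every location $j \leq -1$). From $b_{j+1} - b_j = \nu_{k-j} - \nu_{k-j+1} + 1 \geq 1$ the $b_j$ are strictly increasing; $b_1 = \nu_k + n - k - \nu_1 \geq 0 > c$ and $b_k = \nu_1 + c + k = n-1$ then show the $b_j$ are the $k$ highest beads and all lie in $\{0, 1, \ldots, n-1\}$, i.e.\ in the row containing $0$, with the last one at $n-1$. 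Running the recovery rule on $\mathcal{A}_\nu$ finishes existence: the bead $b_{k-i+1}$ has exactly $i-1$ beads above it and $(b_{k-i+1} - c - 1) - (k-i) = \nu_i$ gaps below it, so $\mathcal{A}_\nu$ represents $\nu$ and its $k$ active beads are precisely $b_1, \ldots, b_k$.

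For uniqueness, let $\mathcal{A}$ be any abacus for $\nu$ with all $k$ active beads in $\{0, \ldots, n-1\}$ and the largest at $n-1$. I would first show $\mathcal{A}$ has the same skeleton as $\mathcal{A}_\nu$: since the active beads are by definition the $k$ highest, every inactive bead lies strictly below the lowest active bead $b_1'$; and since the active beads are the only beads permitted a gap immediately before them, the inactive beads are down-closed (if $m$ is inactive then $m-1$ is a bead, necessarily inactive), hence form $(-\infty, t] \cap \mathbb{Z}$ where $t$ is the position of the $(k+1)$-st highest bead. There is no bead above $n-1$ (it would be active and larger than $n-1$), so $\mathcal{A}$ equals $\{ j \leq t \} \cup \{ b_1' < \cdots < b_k' = n-1 \}$. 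Applying the recovery rule again gives $\nu_i = (b_{k-i+1}' - t - 1) - (k-i)$ for $i = 1, \ldots, k$, and together with $b_k' = n-1$ this system has the unique solution $t = c$, $b_j' = b_j$; hence $\mathcal{A} = \mathcal{A}_\nu$.

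The step needing care is the meaning of ``active bead'' when $\nu$ has parts equal to $0$: then $b_1 = \nu_k + c + 1$ may sit directly above the inactive bead at $c$, so $b_1$ does not literally have a gap immediately preceding it. I will therefore read the abacus axioms as stated in the excerpt — the top $k$ beads are designated active, and the only requirement is that no \emph{other} bead have a gap immediately preceding it — which is precisely what the arguments above use. Everything else is elementary; as a consistency check, $b_k - b_1 = \nu_1 - \nu_k + k - 1 \leq n-1$, so the $b_j$ occupy distinct runners, in agreement with Lemma \ref{T:beaddist}.
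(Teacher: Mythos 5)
Your construction coincides with the paper's own proof (re-indexed from the bottom: your $b_j$ with $j=k-i+1$ is exactly the paper's bead at $n-1-(\nu_1-\nu_i)-(i-1)$, and your threshold $c=n-k-\nu_1-1$ is the paper's $n-1-\nu_1-k$), so the proposal is correct and takes essentially the same approach. The only differences are that you verify the gap count and prove uniqueness explicitly via the recovery rule, where the paper merely asserts uniqueness, and your reading of the ``active bead'' convention agrees with the paper's own usage (e.g.\ the flush abacus for $\lambda=(0)$ in Figure \ref{fig:abacus}).
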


\begin{proof}
By Lemma \ref{T:beaddist}, the last $k$ beads in any abacus for $\nu$ are on $k$ different runners.  Build an abacus $\mathcal{A}_\nu$ for $\nu$ by starting with an unbeaded set of runners.  Place a bead on integer $n-1$.  Then, for each $i$ from $2$ to $k$ place a bead on location $n-1-(\nu_1-\nu_i)-(i-1)$.  Note that we have now placed $k$ beads on integers between $n-1$ and $n-1-(\nu_1-\nu_k)-(k-1)$.  Since $\nu_1-\nu_k \le n-k$, the $k$ beads we have placed are between $n-1$ and $n-1-(n-k)-(k-1)=0$, all on the row containing $0$.  Finally, place an inactive bead on every integer $j$ for $j \le n-1-\nu_1-k$. 

We have now created an abacus $\mathcal{A}_\nu$ for $\nu$ with last bead at location $n-1$ and all beads in the row containing $0$.  There is also an inactive bead at every location $j \le n-1-\nu_1-k$, and since $\nu_1 \le n-k$, there is an inactive bead at every integer $j \le -1$.  Further, if $\mathcal{A}_\nu'$ is another abacus for $\nu$ with last bead at integer $n-1$, it must be exactly the same abacus as $\mathcal{A}_\nu$.  
\end{proof}

\subsection{Coefficients in equivariant cohomology and $n$-cores}

We are now prepared to prove two of the three key propositions which arise in the course of our proof of one box associativity in Section \ref{S:OneBoxAssoc}.  In the proof of Proposition \ref{L:eqvtCoeff}, we describe the equivariant Littlewood-Richardson coefficient $c_{\gamma, \tableau[Yp]{\\ }}^\gamma$ as sums over active beads on abacus diagrams and hence prove that the equivariant Littlewood-Richardson coefficient $c_{\gamma, \tableau[Yp]{\\ }}^\gamma$ behaves predictably under the map $t_i \mapsto t_{i \operatorname{mod} n}$. 
As a reminder, Proposition \ref{L:eqvtCoeff} says that if $\gamma \in \mathcal{P}_{k,2n-1}$ rim hook reduces to $\nu$ by removing $d$ rim hooks of size $n$, then 
\begin{equation}\label{E:PieriCoeffs}
\varphi(c_{\gamma, \tableau[Yp]{ \\ }}^\gamma)=c_{\nu, \tableau[Yp]{ \\ }}^\nu
\end{equation} and in particular, 
\begin{equation}\label{E:PhiOnU}
\varphi\left( \sum\limits_{i \in U(\gamma)}t_i \right) = \sum\limits_{i \in U(\nu)}t_i.
\end{equation}

\begin{proof}[Proof of Proposition \ref{L:eqvtCoeff}] Recall that $U(\gamma)$ indexes vertical steps for the Young diagram $\gamma$. The $j^{th}$ element of $U(\gamma)$  is $j-1+g(j)$, where $g(j)$ is the number of gaps before the $j^{th}$ active bead in any abacus for $\gamma$. Recall that we count active beads in the same order as the integers they are placed on, and so the first active bead gives the smallest nonzero part in the partition.  Thus, the equivariant Littlewood-Richardson coefficient $c_{\gamma, \tableau[Yp]{ \\ }}^\gamma$ can be written as a sum over locations of active beads on an abacus, independently of the number of runners:
\begin{equation}
c_{\gamma, \tableau[Yp]{ \\ }}^\gamma=\sum_{j=1}^k (t_{j-1+g(j)}-t_j)= \left( \sum_{j=1}^k t_{j-1+g(j)}\right)-(t_1+\cdots t_k).
\end{equation}
 
Notice that the sum $t_1+ \cdots +t_k$ is constant under the map $\varphi$.  By Theorem \ref{T:CoreBead}, removing an $n$-rim hook corresponds to moving a bead up one row on its runner. We therefore see that moving a bead up or down one row on a runner changes the index $j-1+g(j)$ of a torus weight by $n$. Thus, under rim hook reduction indices are constant modulo $n$, and under the map $\varphi$ the two summands of $c_{\gamma, \tableau[Yp]{ \\}}^{\gamma}$ remain constant.  Since $\varphi$ does nothing in the case the abacus is flush, we conclude that $\varphi(c_{\gamma, \tableau[Yp]{ \\ }}^\gamma)=c_{\nu, \tableau[Yp]{ \\ }}^\nu$.
 \end{proof}

Finally, we prove Proposition \ref{T:deltarimhookoptions}. 
Recall that Proposition \ref{T:deltarimhookoptions} states that if $\gamma\in \mathcal{P}_{k,2n-1}$ reduces to the $n$-core $\nu \in \mathcal{P}_{kn}$ by removing $d$ rim hooks each of size $n$, then 
\begin{equation}\label{E:RimHookOpsRestate}
\sum\limits_{\substack{\delta \to \gamma \\ \delta \in \mathcal{P}_{k,2n-1}}}\varphi(\sigma_{\delta}) = \sum\limits_{\substack{\epsilon \to \nu \\ \epsilon \in \mathcal{P}_{k,n}}}q^d\sigma_{\epsilon} + q^{d+1}\sigma_{\nu^-} =  q^d\sum\limits_{\substack{\epsilon \to \nu \\ \epsilon \in \mathcal{P}_{k,2n-1}}}\varphi(\sigma_{\epsilon}).
\end{equation}

\begin{proof}[Proof of Proposition \ref{T:deltarimhookoptions}]

We show the first equality in Equation \eqref{E:RimHookOpsRestate} by showing that each term in the second expression appears in the first expression and that each term appears at most once.  

Let $\mathcal{A}_\gamma$ be an abacus for $\gamma$ on $n$ runners.  We obtain an abacus $\mathcal{A}_\nu$ for $\nu$ by making $\mathcal{A}_\gamma$ flush as in Theorem \ref{T:CoreBead}.  From $\mathcal{A}_\nu$ we can obtain $\mathcal{A}_\epsilon$ for any cover $\epsilon \to \nu$ by moving an active bead on $\mathcal{A}_\nu$ one runner to the right, according to Lemma \ref{T:rightbead}. This will be an $n$-core unless it is the last bead that is moved, making the largest part of the partition $\epsilon$ of length $n-k+1$. In this case, taking the $n$-core of $\epsilon$ we must get $\nu^-$.  In either case, for each $\epsilon$ we can reverse the process of obtaining $\mathcal{A}_\nu$ from $\mathcal{A}_\gamma$ to obtain an abacus for a partition $\delta$ which is a cover of $\gamma$ and has $n$-core $\epsilon$.  

Now we show that each term in the second expression appears at most once in the first expression.    We assume for sake of contradiction that two distinct covering partitions $\delta_1 \to \gamma$ and $\delta_2 \to \gamma$ both rim hook reduce to the same $\epsilon \in \mathcal{P}_{kn}$.  Consider an abacus $\mathcal{A}_\nu$  for $\nu$ as constructed in Lemma \ref{T:specialAbacus}, and fix $\mathcal{A}_\gamma$ an abacus for $\gamma$ that reduces to $\mathcal{A}_\nu$ by making the abacus flush.  By Lemma \ref{T:rightbead}, abaci $\mathcal{A}_{\delta_1}$ and $\mathcal{A}_{\delta_2}$ corresponding to $\delta_1$ and $\delta_2$ can be obtained by moving a bead from $\mathcal{A}_\gamma$ one runner to the right.  

If neither bead moved was on the $n^{\text{th}}$ runner in $\mathcal{A}_\gamma$, then when $\mathcal{A}_{\delta_1}$ and $\mathcal{A}_{\delta_2}$ are made flush to obtain an abacus for $\epsilon$, we must get the unique abacus for $\epsilon$ constructed in Lemma \ref{T:specialAbacus}.  Since $\mathcal{A}_{\delta_1}$ and $\mathcal{A}_{\delta_2}$ differed only in two beads, in order to satisfy Lemma \ref{T:beaddist} we must have $\mathcal{A}_{\delta_1}=\mathcal{A}_{\delta_2}$ and hence $\delta_1=\delta_2$, a contradiction.  

Now suppose that to create $\mathcal{A}_{\delta_1}$ we move the bead on the $n^{\text{th}}$ runner in $\mathcal{A}_\gamma$ to the leftmost runner, while to create $\mathcal{A}_{\delta_2}$ we only moved a bead right one runner in the same row.  If there is no inactive bead on integer $0$ in $\mathcal{A}_\gamma$, then again all of the active beads will be in row $0$ when $\mathcal{A}_{\delta_1}$ and $\mathcal{A}_{\delta_2}$ are made flush. However, these two flush abaci will have unequal $k^{\text{th}}$ parts and thus cannot rim hook reduce to the same $\epsilon$, contradicting our assumption.  If there is an inactive bead at $0$ in $\mathcal{A}_\gamma$, then the bead moved to the first runner to form $\mathcal{A}_{\delta_1}$ will still correspond to the largest part of $\epsilon$ when $\mathcal{A}_{\delta_1}$ is made flush.  In this case the $n$-cores of $\delta_1$ and $\delta_2$ do not have equal first parts, contradicting the assumption that both rim hook reduce to $\epsilon$. 

The second equality in Equation \eqref{E:RimHookOpsRestate} follows from the proof of Proposition \ref{T:RimHookPieri}. 
\end{proof}


\section{Cyclic factorial Schur polynomials}\label{S:FacSchur}

\subsection{Polynomial presentations for equivariant cohomology}  The technical heart of much of this paper is the proof of Proposition \ref{T:FacSchur}, for  which we introduce the concept of \emph{cyclic factorial Schur polynomials}. To complete our proof of the equivariant rim hook rule, we use the fact that the reduction map $\varphi$ on the equivariant cohomology ring with the Schubert basis gives rise to a corresponding map on factorial Schur polynomials.  

Applying the isomorphism from Equation \eqref{E:EqBorel} to $Gr(k, 2n-1)$, we have
\begin{equation}\label{E:ClassicalSchurIsom}
H^*_{T_{2n-1}}(Gr(k,2n-1)) \cong  \frac{\Lambda[e_1(x|t), \ldots, e_k(x|t)] } {\langle h_{2n-k}(x|t) , \ldots, h_{2n-1}(x|t) \rangle.}
\end{equation} 
As discussed in Section \ref{S:RimHook}, the $e_i(x|t)$ are the factorial elementary symmetric polynomials, and the $h_i(x|t)$ are the factorial homogenous complete symmetric polynomials in the variables $x_1, \dots, x_k$ and $t_1, \dots, t_{2n-1}$.  
For brevity, we denote the relevant polynomial ring and ideal by
\begin{align}
\widetilde{R} & =\Lambda[e_1(x|t), \ldots, e_k(x|t)]  \\
\widetilde{J} & = \langle h_{2n-k}(x|t) , \ldots, h_{2n-1}(x|t) \rangle
\end{align} 
so that $H^*_{T_{2n-1}}(Gr(k,2n-1)) \cong  \widetilde{R}/\widetilde{J}$.  Note that $\widetilde{R}$ is generated by the factorial elementary symmetric polynomials in $x_1, \ldots, x_k$ and $t_1, \ldots, t_{2n-1}$. 
Similarly, recall from Equation \eqref{eq: qhring} that
\begin{equation}\label{E:QuantumSchurIsom}
QH^*_{T_n}(Gr(k,n)) \cong \frac{\Lambda[q, e_1(x|t), \ldots, e_k(x|t)]} { \langle h_{n-k+1}(x|t), \ldots, h_n(x|t)+(-1)^kq \rangle},
\end{equation}
where now the $e_i(x|t)$ and $h_i(x|t)$ are the factorial elementary symmetric and homogenous complete symmetric polynomials in the variables $x_1, \dots, x_k$ and $t_1, \dots, t_n$.  We will often write this polynomial ring and its associated ``quantum ideal'' as
\begin{align}
 R & = \Lambda[q, e_1(x|t), \ldots, e_k(x|t)] \\
 J & = \langle h_{n-k+1}(x|t), \ldots, h_n(x|t)+(-1)^kq \rangle
\end{align} 
so that $QH^*_{T_n}(Gr(k,n)) \cong R/J$.  Note that $R$ is generated by the factorial elementary symmetric polynomials in $x_1, \ldots, x_k$ and $t_1, \ldots, t_{n}$.   Under each of the isomorphisms \eqref{E:ClassicalSchurIsom} and \eqref{E:QuantumSchurIsom}, we know by Proposition 5.2 in \cite{MihalceaTransactions} that the class $\sigma_{\lambda}$ corresponds to the factorial Schur polynomial $s_{\lambda}(x|t)$.

\begin{definition}
Given a partition $\lambda \in \mathcal{P}_{k,2n-1}$, we define the \textbf{cyclic factorial Schur polynomial} corresponding to $\lambda$ to be the polynomial in $\widetilde{R}$ obtained from the factorial Schur polynomial  $s_{\lambda}(x|t)$ in the variables $t_1, t_2, \ldots, t_{2n-1}$ by applying the reduction $t_i \mapsto t_{i \operatorname{mod} n}$. We denote the cyclic factorial Schur polynomial corresponding to $\lambda$  by $\overline{s_{\lambda}}(x|t)$ to differentiate it from the original $s_{\lambda}(x|t)$.
\end{definition}

Algebraically, the reduction map from Definition \ref{def:EqPhiDef} \[ \varphi: H^*_{T_{2n-1}}(Gr(k,2n-1)) \longrightarrow QH^*_{T_n}(Gr(k,n))\] then corresponds to a surjective $\Z$-module homomorphism $\widetilde{\varphi}: \widetilde{R}  \longrightarrow  R $
determined by
\begin{align} \nonumber
t_i & \longmapsto t_{i (\text{mod } n)}\\ \label{E:PhiFacSchur}
s_\gamma(x|t) & \longmapsto   
\begin{cases}
\prod_{i=1}^d\left((-1)^{(\varepsilon_i-k)}q\right) s_{\nu}(x|t) & \ \  \text{if}\  \nu \in \mathcal{P}_{kn},\\
0 &\ \  \text{if}\  \nu \notin \mathcal{P}_{kn},
 \end{cases} 
\end{align}
where $\nu$ is the $n$-core of $\gamma$. This map passes to a map on the quotient, which we also denote by $\widetilde{\varphi}: \widetilde{R}/\widetilde{J}  \longrightarrow  R/J $. Notice that $t_i \mapsto t_{i \operatorname{mod} n}$ acts as the identity on any $s_{\lambda}(x|t)$  in $\widetilde{R}$ with $\lambda \in \mathcal{P}_{kn}$, since the torus weight with largest index occurring in a such a factorial Schur polynomial is $k+n-k-1=n-1$ (the filling $T(\alpha)$ of a box $\alpha$ in the Young diagram is bounded by $k$ and the content $c(\alpha)$ is bounded by $n-k-1$).  Thus we can view $s_\lambda(x|t)$ as an element of both $R$ and $\widetilde{R}$ whenever $\lambda \in \mathcal{P}_{kn}$.  To clarify the domain when necessary, we write $\widehat{s}_\lambda(x|t)$ to denote the polynomial $s_\lambda(x|t)$ viewed as an element of the larger ring $\widetilde{R}$.

 \begin{example}  We illustrate the construction of the cyclic factorial Schur polynomials by rewriting Example \ref{mainex} in these terms. In $\widetilde{R}/\widetilde{J}$ with $k=2$ and $n=4$, which is isomorphic to the equivariant cohomology ring of $Gr(2,2\cdot4-1) = Gr(2,7)$, we multiply $s_{\tableau[Yp]{ &  \\}} \cdot s_{\tableau[Yp]{ &  \\}}$ to obtain
\begin{align}
s_{\tableau[Yp]{ &  \\}} \cdot s_{\tableau[Yp]{ &  \\}} & = (t_4-t_3)(t_4-t_2)s_{\tableau[Yp]{ &  \\}} + (t_4-t_3)s_{\tableau[Yp]{ &  \\ \\}} + s_{\tableau[Yp]{ &  \\ & \\ }} \\
\nonumber &\phantom{=} + (t_5+t_4-t_3-t_2)s_{\tableau[Yp]{ & & \\}}+s_{\tableau[Yp]{ &  &\\ \\}} + s_{\tableau[Yp]{ & & &  \\}}.
\end{align}
Then, we reduce the $t_i$ via $t_i \mapsto t_{i \operatorname{mod} 4}$
\[
 (t_4-t_3)(t_4-t_2)\overline{s_{\tableau[Yp]{ &  \\}}} + (t_4-t_3)\overline{s_{\tableau[Yp]{ &  \\ \\}}} + \overline{s_{\tableau[Yp]{ &  \\ & \\ }}}+ (t_1+t_4-t_3-t_2)\overline{s_{\tableau[Yp]{ & & \\}}}+\overline{s_{\tableau[Yp]{ &  &\\ \\}}} + \overline{s_{\tableau[Yp]{ & & &  \\}}},
\]
where $\overline{s_\lambda}$ denotes the cyclic factorial Schur polynomial corresponding the partition $\lambda$ obtained by reducing the torus weights present in $s_\lambda$. In the ring $R/J$, which is isomorphic to the equivariant quantum cohomology ring of $Gr(2,4)$, this expression is equal to 
\[
 (t_4-t_3)(t_4-t_2)s_{\tableau[Yp]{ &  \\ }}+ (t_4-t_3)s_{\tableau[Yp]{ &  \\ \\ }} + s_{\tableau[Yp]{ &  \\ & \\ }}+ (t_1+t_4-t_3-t_2)0+q +(-q)
\]
 This example thus illustrates how cyclic factorial Schur polynomials provide a system of polynomial representatives for equivariant quantum cohomology which do not explicitly contain a quantum parameter.
\end{example}

\subsection{Direct analog of the rim hook rule}   As a reminder, the remaining Proposition \ref{T:FacSchur} states that in $QH^*_{T_n}(Gr(k,n))$, we have 
\begin{equation}\label{E:CyclicFacPropReminder}
\varphi \left(\sigma_{\overline{\lambda}} \cdot \widehat{\sigma_{\mu}} \right) = q \sigma_{\lambda^-} \circ \sigma_{\mu},
\end{equation}  
 where $\mu, \lambda \in \mathcal{P}_{kn}$ with $\lambda_1 = n-k$.  Here we prove a reformulation of this proposition in terms of factorial Schur polynomials. 
 
Recall that Theorem \ref{T:MainTheorem} aims to provide a way to calculate the product of two Schubert classes $\sigma_{\lambda} \star \sigma_{\mu}$ in $QH^*_{T_n}(Gr(k,n))$ using the reduction map $\varphi$.  In particular, 
 \[ \sigma_{\lambda} \star \sigma_{\mu} = \sigma_{\lambda} \circ \sigma_{\mu} = \varphi \left( \widehat{\sigma_{\lambda}} \cdot \widehat{\sigma_{\mu}} \right)\]
 where $\widehat{\sigma_{\lambda}}$ is the lift of $\sigma_{\lambda}$ to $H^*_{T_{2n-1}}(Gr(k,2n-1))$.  This perspective gives rise to the following commutative diagram:
 
 \[\begin{CD}
 H^*_{T_{2n-1}}(Gr(k, 2n-1)) \otimes  H^*_{T_{2n-1}}(Gr(k, 2n-1))@>{\cdot}>>H^*_{T_{2n-1}}(Gr(k,2n-1)) \\
 @A{\hat{\cdot}}AA @V{\varphi}VV\\
 H^*_{T_n}(Gr(k, n)) \otimes  H^*_{T_n}(Gr(k, n))@>{\circ}>>QH^*_{T_n}(Gr(k,n)) 
 \end{CD} \]

\noindent The square above can  be viewed as the front side of the following three-dimensional diagram.  The front face of the cube is connected to the corresponding square in terms of cyclic factorial Schur polynomials on the back side of the diagram by the isomorphisms from \eqref{E:EqBorel}, \eqref{E:ClassicalSchurIsom}, and \eqref{E:QuantumSchurIsom} and the definition of $\widetilde{\varphi}$ in \eqref{E:PhiFacSchur}.

\[ \bfig
\cube|arlb|/@{->}^<>(.6){\cdot}` ->`@{>}_<>(.4){\varphi}`>/%
<1000,1000>[H^*_{T_{2n-1}} \otimes  H^*_{T_{2n-1}}`H^*_{T_{2n-1}}`H^*_{T_n} \otimes H^*_{T_n}`QH^*_{T_n};`\widehat{\cdot}``\circ]%
(400,400)|arlb|/>`@{>}|!{(300,1000);(500,1000)}\hole^<>(.6){\widehat{\cdot}}`>`@{>}%
|!{(1000,500);(1000,300)}\hole_<>(.4){\circ}/<900,900>[\widetilde{R}/\widetilde{J}\otimes\widetilde{R}/\widetilde{J}`\widetilde{R}/\widetilde{J}`R/J \otimes R/J`R/J;\cdot``\widetilde{\varphi}`]%
 |rrrr|/<->`<->`<->`<->/[```]
\efig
\]

\noindent As discussed in Section \ref{S:RimHook}, we choose the unique lift of the factorial Schur polynomial $s_\lambda(x|t)$ from $R$ to $\widetilde{R}$ given by the identity map. Note that in the diagram above we abbreviate $H^*_{T_n}(Gr(k,n))$ by $H^*_{T_n}$ and $QH^*_{T_n}(Gr(k,n))$ by $QH^*_{T_n}$.

In order to prove Proposition \ref{T:FacSchur}, we reformulate Equation \eqref{E:CyclicFacPropReminder} using cylic factorial Schur polynomials.  For such a reformulation to suffice, we need to know that the rightmost square in the above three-dimensional diagram commutes.  To this end, we first prove the following equivariant analog of a key lemma of Bertram, Ciocan-Fontanine, and Fulton. The referenced version of the Main Lemma is phrased in terms of cohomology classes $\sigma_{\lambda}$; we use the commutative cube above to discuss polynomial representatives $s_{\lambda}(x|t)$ instead.

\begin{proposition}\label{T: nCores}[Equivariant generalization of Main Lemma in \cite{BCFF}]
Let $\lambda$ be any partition. In the ring $R/J$, \begin{itemize}
\item[(a)] If $\lambda_{1}>n-k$ and $\lambda$ contains no $n$-rim hook, then $s_{\lambda}(x|t)=0$.
\item[(b)] If $\lambda_{k+1}>0$, then $s_{\lambda}(x|t)=0$.
\item[(c)] If $\lambda$ contains an illegal $n$-rim hook, then $s_{\lambda}(x|t)=0$. (An \textit{illegal} $n$-rim hook is one which starts at the end of a row, moves down and to the left, and ends at an inner corner; namely, when removed it leaves a non-valid partition.)
\item[(d)] If $\nu$ is the result of removing an $n$-rim hook from $\lambda$, then \[s_{\lambda}(x|t)=(-1)^{n-h}qs_{\nu}(x|t),\] where $h$ is the height of the $n$-rim hook removed. 
\end{itemize}
\end{proposition}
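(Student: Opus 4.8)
The plan is to work entirely inside the ring $R = \mathbb{Z}[q, e_1(x|t), \dots, e_k(x|t)]/J$ using the Jacobi–Trudi expansion $s_\lambda(x|t) = \det(h_{\lambda_i + j - i}(x|t))_{1 \le i,j \le k}$, together with the defining relations of $J$, namely $h_{n-k+1}(x|t) \equiv \dots \equiv h_{n-1}(x|t) \equiv 0$ and $h_n(x|t) \equiv (-1)^{k+1} q \pmod{J}$. The key technical input I would isolate first is a factorial analog of the classical ``straightening'' identity: a relation expressing $h_m(x|t)$ for $m \ge n-k+1$ in terms of lower-degree $h$'s with coefficients that are polynomials in the $t_i$, obtained from the generating-function identity for factorial complete homogeneous symmetric polynomials. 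This is the place where the shift in torus weights enters, and I expect that once this recursion is in hand, parts (a)–(d) all follow by the same determinant manipulations used in \cite{BCFF} in the non-equivariant case, carried along with the extra bookkeeping of the weights.

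For the individual parts I would proceed as follows. Part (b) is the easiest: if $\lambda_{k+1} > 0$ then in any Jacobi–Trudi-type determinant of size $k$ the shape genuinely has more than $k$ rows, so I would realize $s_\lambda(x|t)$ as a $(k+1)\times(k+1)$ (or larger) determinant whose bottom-right structure forces a column (or row) of the form $h_0, 0, 0, \dots$ or produces two equal rows after using $h_{<0} = 0$, giving $0$ on the nose — this does not even need $J$, only the vanishing of negative-index $h$'s. Part (c), the illegal rim hook case, is the classical observation that an illegal $n$-rim hook corresponds to a row exchange in the determinant that produces two equal rows, hence a zero determinant; the factorial version is identical since the entries $h_{\lambda_i + j - i}(x|t)$ still obey the same index arithmetic. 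Part (d) is the heart: removing a legal $n$-rim hook of height $h$ from $\lambda$ to get $\nu$ corresponds, at the level of the Jacobi–Trudi matrix, to cyclically permuting a block of $h$ rows and shifting their indices down by $n$; each such index shift replaces some $h_{m}$ by $h_{m-n}$ at the cost of a factor coming from the relation $h_n \equiv (-1)^{k+1}q$, and the cyclic permutation of $h$ rows contributes the sign $(-1)^{h-1}$. Tracking both contributions gives the stated $(-1)^{n-h} q$; I would organize this via a single lemma on how the relation $h_n(x|t) + (-1)^k q \equiv 0$ propagates through one row operation, then iterate. Finally part (a) follows from (c) and (d): if $\lambda_1 > n-k$ but $\lambda$ has no $n$-rim hook at all, one shows the associated Jacobi–Trudi determinant must contain an entry $h_m(x|t)$ with $m \ge 2n-k$ or otherwise reduces, via the relations in $J$, to a determinant with a repeated row, hence vanishes; alternatively, no removable $n$-rim with no room to even start the strip forces a degenerate determinant.

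The main obstacle I anticipate is part (d), specifically verifying that the torus-weight polynomials that appear when one applies the factorial recursion $h_m(x|t) = \sum (\text{polynomial in } t)\, h_{m'}(x|t)$ repeatedly do not obstruct the clean extraction of the single factor $(-1)^{n-h}q$ — in the non-equivariant setting these are just the relations $h_{n-k+1} = \dots = h_{n-1} = 0$, $h_n = (-1)^{k+1}q$, but in the factorial case the ``straightening'' of $h_m$ for $n < m \le 2n-k-1$ is not simply $0$ and one must confirm that all such intermediate terms either lie in $J$ or cancel in the determinant expansion. I would handle this by choosing a careful order of row reductions (reducing the topmost offending row first, using the smallest possible set of relations at each stage) and by appealing to the fact that, after reduction, every surviving entry has index in the ``safe'' range $0 \le m \le n-k$, so that the resulting determinant is literally the Jacobi–Trudi determinant for $\nu$. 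A secondary, purely combinatorial point to nail down is the precise correspondence between legal/illegal $n$-rim hook removals and the allowable/forbidden row operations on the Jacobi–Trudi matrix; this is standard but should be stated as an explicit lemma since it is used in (a), (c), and (d).
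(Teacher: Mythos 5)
Your overall strategy---expand by Jacobi--Trudi and redo the \cite{BCFF} row manipulations with equivariant bookkeeping---is the same family of argument the paper uses, and your treatments of (b) and (c) agree with the paper's. However, there is a genuine gap at the equivariant crux, and it is exactly the point you flagged as the ``main obstacle'' without resolving it. First, the factorial Jacobi--Trudi formula is not $\det\bigl(h_{\lambda_i+j-i}(x|t)\bigr)$: the entries carry column-dependent shifts, $s_\lambda(x|t)=\det\bigl(h_{\lambda_i+j-i}(x|\tau^{1-j}t)\bigr)$. The ideal $J$ is generated by relations in the \emph{unshifted} polynomials $h_{n-k+1}(x|t),\dots,h_{n-1}(x|t),\,h_n(x|t)+(-1)^kq$, so you cannot conclude that an entry such as $h_{n-k+2}(x|\tau^{-1}t)$ lies in $J$, nor that $h_n(x|\tau^{1-k}t)$ is $\pm q$, without proving it. The paper's proof rests precisely on this statement (Lemma \ref{lem: quantuminvariance}): modulo $J$, and only after the cyclic specialization $t_i\mapsto t_{i\,(\mathrm{mod}\,n)}$ (i.e.\ for cyclic factorial Schur polynomials), one has $h_m(x|\tau^{-s}t)\equiv 0$ for $n-k<m<n$ and $h_n(x|\tau^{-s}t)$ equal to a shift-independent multiple of $q$, the proof using the identification $t_{n-s+1}=t_{-s+1}$, which is simply false without the mod-$n$ reduction of the weights. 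Your proposal never invokes this cyclic identification, and ``choosing a careful order of row reductions'' cannot substitute for it: without shift invariance the intermediate torus-weight terms genuinely survive and the determinant manipulations for (a) and (d) do not close up.

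Second, in (a) and (d) one inevitably meets entries of index at least $n$ (indeed up to about $2n$), and your claim that after reduction ``every surviving entry has index in the safe range'' is exactly what has to be proved, not assumed; the paper handles it with the separate identity $h_{nd+j}(x|t)\equiv(-1)^{d(n-k-1)}q^d\,h_j(x|t)\bmod J$ (Lemma \ref{T: mini}), established by induction from the relation $\sum_{r}(-1)^re_r(x|\tau^{s-1}t)h_{s-r}(x|t)=0$. Without an analogue of that lemma, your scheme has no mechanism for converting high-index entries into $q$-multiples of low-index ones, so even the non-equivariant skeleton of your part (d) is incomplete in this setting, and your part (a) (which you reduce to vague alternatives) likewise lacks the needed input; the paper's (a) instead kills the entire first row of the shifted Jacobi--Trudi matrix using the shift-invariance lemma. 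Finally, your sign count in (d) ($(-1)^{h-1}$ from the row cycle times the sign from $h_n$) is asserted rather than checked against the stated $(-1)^{n-h}$; this becomes routine once the two lemmas above are available, but as written it is not established.
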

 
 From part (d) in particular, we see that the following diagram commutes: 

\begin{diagram}
 H^*_{T_{2n-1}}(Gr(k,2n-1)) & \rTo & \widetilde{R}/\widetilde{J} \\
\dTo{\varphi} &   & \dTo{\widetilde{\varphi}}\\
QH^*_{T_n}(Gr(k,n)) & \rTo & R/J
\end{diagram}

 \noindent Since this fact implies that the entire three-dimensional diagram above is commutative, to conclude our proof of Theorem \ref{T:BoxAssoc}, it  suffices to prove the following reformulation of Proposition \ref{T:FacSchur} using cyclic factorial Schur polynomials.

\begin{proposition}\label{T:AlgProd} For $\mu, \lambda  \in \mathcal{P}_{kn}$ with $\lambda_1 = n-k$, 
\begin{equation}
\widetilde{\varphi}( s_{\overline{\lambda}}(x|t)\widehat{s_\mu}(x|t))= q\widetilde{\varphi}( \widehat{s}_{\lambda^-}(x|t)\widehat{s_\mu}(x|t) ).
\end{equation}
\end{proposition}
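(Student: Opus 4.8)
Throughout assume $k\geq2$ (for $k=1$ one has $Gr(1,n)\cong\mathbb{P}^{n-1}$ and this case is checked directly; indeed the identity as stated does use $k\geq2$, since for $k=1$ a term with first part $2n-1$ — invisible in $\widetilde{R}$ — would contribute to $\varphi$ of the product). The idea is to rewrite both sides as honest polynomials modulo $J$, combining two facts: (1) the behaviour of $\varphi$ on the Schubert basis recorded in Proposition~\ref{T: nCores}, equivalently the commutativity of the right-hand square of the diagram of Section~\ref{S:FacSchur}, namely $\varphi\bigl(s_\gamma(x|t)\bigr)=s_\gamma(x|\bar t)\bmod J$ for every $\gamma\in\mathcal{P}_{k,2n-1}$, where $\bar t$ denotes the substitution $t_i\mapsto t_{i\,(\mathrm{mod}\,n)}$; and (2) an identification of the $n$-core of $\overline{\lambda}$.

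For (2): writing $\overline{\lambda}=(n-k+1,\lambda_2,\dots,\lambda_k)$, if $\lambda_k\geq1$ then $\overline{\lambda}$ has exactly $k$ non-zero parts and its full border strip has $(n-k+1)+k-1=n$ boxes; removing it yields $(\lambda_2-1,\dots,\lambda_k-1)=\lambda^-$, which is already an $n$-core, via a rim hook spanning all $k$ rows (so the sign attached by $\varphi$ is $(-1)^{k-k}=1$). If $\lambda_k=0$, the border strip of $\overline{\lambda}$ is too short to contain an $n$-rim hook, so $\overline{\lambda}$ is its own $n$-core, which lies outside $\mathcal{P}_{kn}$ since $\overline{\lambda}_1=n-k+1>n-k$, and $\lambda^-=0$ as well. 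Hence, by Proposition~\ref{T: nCores}(d) in the first case and (a) in the second, $s_{\overline{\lambda}}(x|t)\equiv q\,s_{\lambda^-}(x|t)\bmod J$, and therefore $s_{\overline{\lambda}}(x|t)\,s_\mu(x|t)\equiv q\,s_{\lambda^-}(x|t)\,s_\mu(x|t)\bmod J$ since $J$ is an ideal. Observe that, because $\overline{\lambda}_1=n-k+1$ and $\lambda^-,\mu\in\mathcal{P}_{kn}$, none of $s_{\overline{\lambda}},s_{\lambda^-},s_\mu$ involves a weight $t_i$ with $i>n$, so each is fixed by $\bar t$.

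For the left-hand side, expand the product in $\widetilde{R}\cong H^*_{T_{2n-1}}(Gr(k,2n-1))$ by the equivariant Littlewood--Richardson rule, $s_{\overline{\lambda}}(x|t)\,s_\mu(x|t)=\sum_\gamma c_{\overline{\lambda},\mu}^{\gamma}\,s_\gamma(x|t)$. Since this product has degree $\overline{\lambda}_1+\mu_1\leq(n-k+1)+(n-k)\leq2n-1-k$ in $x_1$, every $\gamma$ occurring lies in $\mathcal{P}_{k,2n-1}$ — this is the sole use of $k\geq2$ — so no relation of $\widetilde{R}$ is invoked and the displayed equality is a polynomial identity, hence survives the substitution $t\mapsto\bar t$. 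Applying $\varphi$, which acts on coefficients by $\bar t$ and on the basis elements by the rim-hook rule, and then invoking fact (1) and the substituted identity, one gets
\[
\varphi\bigl(s_{\overline{\lambda}}(x|t)\,s_\mu(x|t)\bigr)=\Bigl(\sum_\gamma\varphi\bigl(c_{\overline{\lambda},\mu}^{\gamma}\bigr)\,s_\gamma(x|\bar t)\Bigr)\bmod J=\bigl(s_{\overline{\lambda}}(x|\bar t)\,s_\mu(x|\bar t)\bigr)\bmod J=\bigl(s_{\overline{\lambda}}(x|t)\,s_\mu(x|t)\bigr)\bmod J ,
\]
the last step by the observation ending the previous paragraph. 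Combining with that paragraph's congruence yields $\varphi\bigl(s_{\overline{\lambda}}(x|t)\,s_\mu(x|t)\bigr)\equiv q\,s_{\lambda^-}(x|t)\,s_\mu(x|t)\bmod J$, which is the assertion.

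The main obstacle is the ``no overflow'' step: one must verify that the equivariant Littlewood--Richardson expansion of $s_{\overline{\lambda}}\cdot s_\mu$ in $\widetilde{R}$ involves no partition with first part exceeding $2n-1-k$, so that the expansion may be treated as a genuine polynomial identity and $\varphi$ may be commuted past it. This is exactly what fails for $k=1$, and it is also the underlying reason the lift is taken to $Gr(k,2n-1)$ rather than to $Gr(k,2n)$. Everything else is bookkeeping with the torus weights together with the rim-hook behaviour of $\varphi$ supplied by Proposition~\ref{T: nCores}.
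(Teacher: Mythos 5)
Your argument is correct for $k\geq 2$, and it takes a genuinely different route from the paper. The paper proves Proposition \ref{T:AlgProd} by a direct Jacobi--Trudi computation: it writes $s_{\overline{\lambda}}s_\mu=\det(M_{\overline{\lambda}}M_\mu)$ and $s_{\lambda^-}s_\mu=\det(M_{\lambda^-}M_\mu)$, uses the shift-invariance Lemma \ref{lem: quantuminvariance} to kill every entry of the first row of $M_{\overline{\lambda}}M_\mu$ except the last one (which contributes the factor $q$), expands both determinants and matches them, observing at the end that the reduction $t_i\mapsto t_{i\,(\mathrm{mod}\,n)}$ acts trivially on the resulting expressions. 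You instead reduce the whole statement to the single-class congruence $s_{\overline{\lambda}}(x|t)\equiv q\,s_{\lambda^-}(x|t)\bmod J$ (which is exactly the formula established at the start of the paper's proof of Proposition \ref{T: nCores}(d), via the full $n$-rim of $\overline{\lambda}$; your case $\lambda_k=0$ is likewise covered by part (a)), and you purchase the right to commute $\varphi$ past the product by a stability (``no overflow'') argument: for $k\geq2$ the factorial-Schur expansion of $s_{\overline{\lambda}}s_\mu$ only involves $\gamma$ with $\gamma_1\leq\overline{\lambda}_1+\mu_1\leq 2n-1-k$, so the Schubert-basis expansion in $H^*_{T_{2n-1}}(Gr(k,2n-1))$ is an honest polynomial identity and survives the substitution $t_i\mapsto t_{i\,(\mathrm{mod}\,n)}$. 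What your route buys is that it isolates precisely the point the paper leaves implicit, namely the passage between $\varphi$ (defined basis-wise on $\widetilde{R}$) and ``reduce the $t$'s, then reduce mod $J$'' applied to the product polynomial; what the paper's route buys is independence from any support statement about equivariant Littlewood--Richardson coefficients, since everything happens inside $R$ at the level of determinants.

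Two points need tightening. First, the inference ``the product has $x_1$-degree at most $2n-1-k$, hence every $\gamma$ occurring lies in $\mathcal{P}_{k,2n-1}$'' requires ruling out cancellation in top $x_1$-degree: note that the coefficient of $x_1^{\gamma_1}$ in $s_\gamma(x|t)$ is a factorial Schur polynomial in $x_2,\dots,x_k$ with shifted parameters, and these are linearly independent over $\Z[t]$, so the maximal $\gamma_1$ appearing in the expansion is indeed bounded by the $x_1$-degree of the product. Second, your parenthetical on $k=1$ cannot stand as written: the identity is not merely unproved but actually false for $k=1$ under the basis-wise definition of $\varphi$, so it cannot be ``checked directly.'' For $n=2$, $\lambda=\mu=(1)$ one has $\sigma_{(2)}\cdot\widehat{\sigma_{(1)}}=(t_3-t_1)\,\sigma_{(2)}$ in $H^*_{T_3}(Gr(1,3))$, so $\varphi(\sigma_{(2)}\cdot\widehat{\sigma_{(1)}})=(t_1-t_1)\,q\,\sigma_{(0)}=0$, whereas $q\,\sigma_{(0)}\circ\sigma_{(1)}=q\,\sigma_{(1)}\neq 0$; the discrepancy is exactly the overflow class $s_{(3)}=h_3(x|t)$, which is invisible in $H^*_{T_3}$ but whose cyclic reduction is $\equiv q\,s_{(1)}\not\equiv 0 \bmod J$. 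So the honest conclusion is that Propositions \ref{T:FacSchur} and \ref{T:AlgProd} should carry the hypothesis $k\geq2$ (the paper's determinantal proof tacitly needs the same restriction at the moment it identifies $\varphi$ of the cohomological product with the reduced determinant), with the $k=1$ case of the main theorem verified separately, which is elementary for $\mathbb{P}^{n-1}$. With those two repairs your proof is complete.
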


\subsection{Jacobi-Trudi formulas and cyclic factorial Schur polynomials}  This section relies heavily on the Jacobi-Trudi formula for expanding factorial Schur polynomials in terms of factorial complete homogeneous polynomials.  Set $\tau^{-s}t$ to be the shifted torus weights $\tau^{-s}t_i=t_{-s+i}$. Recall the factorial Jacobi-Trudi formula from Equation \eqref{E:FacJacobiTrudi}, which states that
\begin{equation*}
s_{\lambda}(x|t) = \det(h_{\lambda_i + j -i}(x|\tau^{1-j}t))_{1 \leq i,j \leq k}.
\end{equation*}

For brevity, let $M_{\nu}$ denote the matrix appearing in the Jacobi-Trudi formula for $s_\nu(x|t)$.  The proofs of Propositions \ref{T: nCores} and \ref{T:AlgProd} first require a lemma on quantum invariance under a shift in torus weights.  We then proceed to an argument using an expansion of the matrices obtained by multiplying the Jacobi-Trudi expansions of factorial Schur polynomials under consideration.

\begin{lemma}[Quantum invariance under shift]\label{lem: quantuminvariance} In the ring $R/J$, 
\begin{equation} h_m(x|\tau^{-s}t) = 0\end{equation}
 for all $s$ and all $m$ such that $n-k < m <n$, and 
 \begin{equation} h_n(x|\tau^{-s}t) = (-1)^{k+1}q  \end{equation}
  for all $s$.
\end{lemma}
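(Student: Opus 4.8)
The goal is to show that the factorial complete homogeneous polynomials $h_m(x|\tau^{-s}t)$, after the torus shift, still represent the expected classes in the quantum ring $R = \mathbb{Z}[q,e_1(x|t),\dots,e_k(x|t)]/J$: namely zero when $n-k<m<n$, and $q$ when $m=n$. The key observation is that the defining relations of $J$ are exactly the statements $h_{n-k+1}(x|t)=\cdots=h_{n-1}(x|t)=0$ and $h_n(x|t)=-(-1)^k q = (-1)^{k+1}q \bmod J$, so what we really need is a way to convert $h_m(x|\tau^{-s}t)$ into $h_m(x|t)$ modulo these relations. The plan is to prove this by induction on $|s|$, reducing to the case of a single unit shift, and for that to use the standard recursion relating $h_m(x|\tau^{-1}t)$ to $h_m(x|t)$.

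\textbf{Step 1: the one-step shift identity.}
I would first recall (or derive from the combinatorial definition of factorial Schur/complete homogeneous polynomials, cf. \cite{Macdonald, MihalceaAdvances}) the elementary identity
\begin{equation*}
h_m(x|t) = h_m(x|\tau^{-1}t) + (x_1 - t_m)\, h_{m-1}(x|\tau^{-1}t),
\end{equation*}
or equivalently the version that peels off a variable; more symmetrically one has $h_m(x|\tau^{-1}t) - h_m(x|t) = (t_m - t_{?})\,h_{m-1}(x|\tau^{-1}t)$ type relations, and it suffices to have one such telescoping recursion. The precise form is a routine manipulation of the tableau sum, so I would not belabor it. The upshot is that a unit shift in the torus parameters changes $h_m(x|\tau^{-s}t)$ only by a $\Lambda$-multiple of $h_{m-1}$ evaluated at shifted weights.

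\textbf{Step 2: induction.}
Using Step 1, I would induct on $m$ (with a secondary induction or direct bound on the number of shifts $s$). For $n-k < m < n$: by the recursion, $h_m(x|\tau^{-s}t)$ differs from $h_m(x|\tau^{-(s\mp1)}t)$ by a multiple of $h_{m-1}$ at a shifted weight. Iterating the shift down to $s=0$ expresses $h_m(x|\tau^{-s}t)$ as $h_m(x|t)$ plus a $\Lambda$-linear combination of terms $h_{m-1}(x|\tau^{-r}t)$, and one must check these lower terms also vanish mod $J$. The subtlety is that $h_{m-1}$ for $m-1$ possibly $\le n-k$ is \emph{not} in $J$; so the induction has to be organized carefully — one shows that the specific combination that arises telescopes, or one tracks that each intermediate $h_j$ appearing with $j>n-k$ vanishes while the contributions with $j\le n-k$ cancel because the total is a genuine shift identity for a polynomial that is congruent to $0$ at $s=0$. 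The cleanest route is probably: prove the statement simultaneously for all $m$ in the range $n-k<m\le n$ by downward-then-upward bookkeeping, or invoke the ring automorphism perspective — the map $t\mapsto\tau^{-1}t$ extends to an automorphism of the polynomial ring fixing the $x_i$, and one checks it preserves the ideal $J$ together with the class of $q$, which forces $h_m(x|\tau^{-s}t)$ to have the same image as $h_m(x|t)$.

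\textbf{Step 3: the $m=n$ case and conclusion.}
Once the $n-k<m<n$ statements are in hand, the same shift recursion applied to $m=n$ writes $h_n(x|\tau^{-s}t)$ as $h_n(x|t)$ plus $\Lambda$-multiples of $h_j(x|\tau^{-r}t)$ with $n-k<j<n$, all of which are now known to vanish mod $J$; hence $h_n(x|\tau^{-s}t) \equiv h_n(x|t) \equiv q \bmod J$, using the defining relation $h_n(x|t)+(-1)^k q \in J$ and that $(-1)^k$ is a unit — wait, more precisely the relation gives $h_n(x|t) \equiv -(-1)^k q$, so I would double-check the sign convention in Lemma \ref{lem: quantuminvariance} against \eqref{eq: qhring}; the statement as written ($h_n \mapsto q$) presumably uses a normalization in which the class of the full-width strip is $q$, and any sign is absorbed consistently — this bookkeeping is the only place to be careful, not a real obstacle.

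\textbf{Main obstacle.}
The genuine difficulty is Step 2: making the induction bookkeeping honest, since the ``error terms'' $h_{m-1}$ produced by the shift recursion can fall below the threshold $n-k$ where they are no longer ideal elements. The right fix is to phrase everything as: the shift $t\mapsto\tau^{-1}t$ induces an algebra automorphism of $\mathbb{Z}[q,e_1(x|t),\dots,e_k(x|t)]$ (over $\mathbb{Z}[q]$, acting on the $t_i$), and one shows directly that this automorphism carries the generators of $J$ into $J$ — which is essentially the content of the $n-k<m<n$ and $m=n$ claims, so there is a mild circularity to break. Breaking it cleanly (e.g. by expressing the shifted generator of $J$ as an explicit $\Lambda[q]$-combination of the unshifted generators, via a determinantal or generating-function identity for $\sum_m h_m(x|\tau^{-s}t) z^m$) is where the real work lies.
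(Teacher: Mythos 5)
There is a genuine gap, and you have in fact located it yourself: your ``Main obstacle'' paragraph flags exactly the step the paper has to supply, and your proposal never supplies it. The problematic case is the base case $m=n-k+1$, where the one-step shift produces an error term proportional to $h_{n-k}$, which is \emph{not} in $J$; your suggested fixes (``the combination telescopes,'' ``contributions with $j\le n-k$ cancel,'' or the automorphism argument you yourself admit is circular) do not resolve this. The actual resolution is not a cancellation among several terms but a vanishing of the \emph{coefficient} of the single error term, and it comes from the precise form of the shift identity together with the cyclic specialization of torus weights that defines $R$. Mihalcea's identity reads
\begin{equation*}
h_m(x|\tau^{-s}t) \;=\; h_m(x|\tau^{-s+1}t) \;+\; \bigl(t_{m+k-s}- t_{-s+1}\bigr)\,h_{m-1}(x|\tau^{-s+1}t),
\end{equation*}
with a pure difference of torus weights as coefficient (your version with $(x_1-t_m)$, and the ``$t_{?}$'' placeholder, is not the identity you need — the absence of any $x$-variable in the coefficient is essential). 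Setting $m=n-k+1$ the coefficient becomes $t_{n-s+1}-t_{-s+1}$, whose two indices differ by exactly $n$; since in $R$ the torus weights are indexed modulo $n$ (this is precisely where the reduction $t_i\mapsto t_{i\bmod n}$ earns its keep), this coefficient is identically zero. Hence $h_{n-k+1}(x|\tau^{-s}t)=h_{n-k+1}(x|\tau^{-s+1}t)$ on the nose, with no appeal to $h_{n-k}$, and iterating gives $h_{n-k+1}(x|\tau^{-s}t)\in J$ for all $s$.

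Once the base case is in hand, your Steps 2--3 do go through essentially as the paper argues: for $n-k+1<m<n$ the error term is a multiple of $h_{m-1}$ at a shifted weight with $m-1>n-k$, which vanishes mod $J$ by induction on $m$, and for $m=n$ the error term is a multiple of $h_{n-1}(x|\tau^{-s+1}t)\equiv 0$, so $h_n(x|\tau^{-s}t)\equiv h_n(x|t)$ and the defining relation $h_n(x|t)+(-1)^kq\in J$ finishes it (your caution about the sign normalization is fair, but that is a bookkeeping point shared with the paper). So the structure of your induction is the right one; what is missing is the one concrete fact that makes it start, namely the exact coefficient in the shift recursion and its vanishing modulo $n$ — without it, the base case, and therefore the whole lemma, is unproved.
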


\begin{proof}First we establish the base case, invariance under shift of the polynomial $h_{n-k+1}(x|\tau^{-s}t)$. By Equation 1.1 in \cite{MihalceaTransactions}, it is true for all $s$ and given $m$ that
\begin{equation} h_m(x|\tau^{-s}t) = h_m(x|\tau^{-s+1}t) + (t_{m+k-s}- t_{-s+1})h_{m-1}(x|\tau^{-s+1}t).
\end{equation}
Substituting $m = n-k+\ell$ gives the useful equation
\begin{equation}\label{eq: useful}
h_{n-k+\ell}(x|\tau^{-s}t) = h_{n-k+\ell}(x|\tau^{-s+1}t) + (t_{n -k+ \ell+k-s}- t_{-s+1})h_{n-k+\ell-1}(x|\tau^{-s+1}t).
\end{equation}
Notice that for $\ell = 1$, this simplifies to 
\begin{equation}
h_{n-k+1}(x|\tau^{-s}t) = h_{n-k+1}(x|\tau^{-s+1}t) + (t_{n-s+1}- t_{-s+1})h_{n-k}(x|\tau^{-s+1}t).
\end{equation}
Reducing indices on torus weights modulo $n$, the difference $t_{n-s+1}- t_{-s+1} $ is zero, giving  
\begin{equation}
\overline{h}_{n-k+1}(x|\tau^{-s}t) = \overline{h}_{n-k+1}(x|\tau^{-s+1}t)
\end{equation}
 for cyclic factorial Schurs. Because $\overline{h}_{n-k+1}(x|t) =h_{n-k+1}(x|t) \in J$, we see that any shift by $\tau$ is also in $J$. This is the base case for induction. 

For $1<\ell<k$ we use Equation \eqref{eq: useful} and the assertion that $h_{n-k+\ell-1}(x|\tau^{-s+1}t) = 0 \bmod J$ under the inductive hypothesis (invariance of the polynomial under shifting $\tau^{-s}$). Note that $h_{n-k+\ell-1}(x|t) \in J$. This establishes the first statement in the lemma.

For the second statement, $j = n$ implies \[h_n(x|\tau^{-s}t) = h_n(x|\tau^{-s+1}t) + (t_{n+k-s}- t_{-s+1})h_{n-1}(x|\tau^{-s+1}t),\] but by our previous computations $h_{n-1}(x|\tau^{-s+1}t)$ is zero modulo $J$ and so \[h_n(x|\tau^{-s}t) = h_n(x|\tau^{-s+1}t).\]
Therefore, in $R/J$ we have 
 \[h_n(x|\tau^{-s}t) = h_n(x|t) = (-1)^{k+1}q,\]
 for all $s$.
 \end{proof}

We now prove the two key propositions in this section, starting with the equivariant generalization of the rim hook rule from \cite{BCFF}.

\begin{proof}[Proof of Proposition \ref{T: nCores}] Let $\lambda$ be any partition. 

(a) Suppose $\lambda$ does not contain an $n$-rim hook and the width of the partition $\lambda_1>n-k$.  If $\ell$ is the height of the partition (the number of non-zero parts in $\lambda$), we must have $\lambda_1 + \ell <n$, else $\lambda$ would contain an $n$-rim hook. The Jacobi-Trudi formula \[s_{\lambda}(x|t) = \det(h_{\lambda_i + j -i}(x|\tau^{1-j}t))_{1 \leq i,j \leq k}\] reduces to \[s_{\lambda}(x|t) = \det(h_{\lambda_i + j -i}(x|\tau^{1-j}t))_{1 \leq i,j \leq \ell}\] by repeated Laplace expansion along the bottom $k-\ell$ rows of the matrix.  
Since $n-k< \lambda_1 <n-\ell$, the first row of this reduced Jacobi-Trudi matrix consists of the polynomials $h_{\lambda_1 + j -1}(x|\tau^{1-j}t)$ for $1 \leq j \leq \ell$, which are all elements of $J$ and so zero in the ring $R/J$. 

(b) If $\lambda_{k+1}>0$, we have $s_{\lambda}(x|t)=0$ by definition. 

(c) If $\lambda$ contains an illegal $n$-rim hook, a determinantal argument shows $s_{\lambda}(x|t)=0$. If an $n$-rim hook is removed from $\lambda$ starting in row $r$ and ending in row $s$, the resulting shape has row lengths 
\[ 
(\lambda_1, \ldots, \lambda_{r-1}, \lambda_{r+1}-1, \lambda_{r+2}-1, \ldots, \lambda_s-1, \lambda_r-r+s-n, \lambda_{s+1}, \ldots, \lambda_k).
\] 
As discussed in Equation (12) of \cite{BCFF}, the $n$-rim hook is illegal when $\lambda_r -r +s -n = \lambda_{s+1}-1$. Define a factorial Jacobi-Trudi identity 
\[ 
\Upsilon_{m}(x|t) = \det (h_{m_i+j-i}(x|\tau^{1-j}t))_{1 \leq i,j \leq k}
\] for any composition $m=(m_1,\ldots, m_k)$. Apply it to the shape resulting from removing the illegal $n$-rim hook from $\lambda$. Row $s$ in the Jacobi-Trudi matrix is \[ [h_{\lambda_r-r+s-n+1-s}(x|t), \;\; h_{\lambda_r-r+s-n+2-s}(x|\tau^{-1}t), \;\;\ldots, \;\; h_{\lambda_r-r+s-n+k-s}(x|\tau^{1-k}t)].\] Row $s+1$ is \[ [h_{\lambda_{s+1}+1-s-1}(x|t), \;\; h_{\lambda_{s+1}+2-s-1}(x|\tau^{-1}t), \;\;\ldots, \;\; h_{\lambda_{s+1}+k-s-1}(x|\tau^{1-k}t)].\] Since $\lambda_{s+1} = \lambda_r-r+s-n+1$, these are the same, and so the determinant is zero.

(d) If $\lambda$ reduces to $\nu \in \mathcal{P}_{kn}$ by removing $d$ rim hooks of length $n$, we prove that the factorial Schur polynomial $s_{\lambda}(x|t) \in \widetilde{R}$ reduces via $\widetilde{\varphi}$ to $q^d s_{\nu}(x|t)$ in $R/J$. 

We first prove a version of a formula on page 2295 of  \cite{MihalceaTransactions}:
\begin{equation} \label{E:MihalceaStatemt}
\widetilde{\varphi}(s_{\bar{\lambda}}(x|t) )= q s_{\lambda^-}(x|t) 
\end{equation} By the factorial Jacobi-Trudi formula,
 \begin{equation} 
 s_{\bar{\lambda}}(x|t)= \det \left[ \begin{array}{cccc} h_{n-k+1}(x|t) & h_{n-k+2}(x|\tau^{-1}t) & \cdots& h_n(x | \tau^{1-k}t) \\ h_{\lambda_2-1}(x|t) & \ddots & & h_{\lambda_2+k-1}(x|\tau^{1-k}t) \\ \vdots&& \ddots& \vdots \\ h_{\lambda_k-k+1}(x|t)& \cdots  &\cdots & h_{\lambda_k}(x|\tau^{1-k}t) \end{array}\right].\\ 
 \end{equation} 
 Modulo the relations in the ideal $J$, every entry in the top row is zero except for $h_n(x| \tau^{1-k}t)$   $= (-1)^{n-k}q$. 
Expanding the determinant along the top row then gives 
\[ 
\widetilde{\varphi} (s_{\bar{\lambda}}(x|t))= (-1)^{k+1} q \det \left[ \begin{array}{cccc} h_{\lambda_2-1}(x|t) & \cdots & \cdots& h_{\lambda_2+k-2}(x|\tau^{1-k}t) \\ \vdots& \ddots& \ddots& \vdots \\h_{\lambda_k+1-k}(x|t) & \cdots &\cdots & h_{\lambda_k-1}(x|\tau^{1-k}t) \end{array}\right],  
\] 
where the matrix is $k-1 \times k-1$. Recall that $h_n(x|t)+(-1)^kq =0 \operatorname{mod} J$, and the two factors $(-1)^k$ cancel.  As $\lambda^-=(\lambda_2-1, \lambda_3-1, \ldots, \lambda_k-1,0)$, this right-hand side is exactly $q s_{\lambda^-(x|t)}$ by factorial Jacobi-Trudi. This proves that the map $\widetilde{\varphi}$ satisfies Mihalcea's statement \eqref{E:MihalceaStatemt}.

Part (d) goes further and says that if $\lambda$ rim hook reduces to $\nu \in \mathcal{P}_{kn}$ by removing one rim hook of size $n$, then 
\[\widetilde{\varphi}(s_{\lambda}(x|t))=(-1)^{n-h}q s_{\nu}(x|t)\] in $R/J$.  Assume $\lambda_1 \geq n-k+1$. It is easiest to prove this statement by showing that 
\begin{equation}\label{E:LemmaForPartd}
\widetilde{\varphi}( h_{nd+j}(x|t)) = (-1)^{d(n-k-1)}q^d h_{j}(x|t).
\end{equation}
We prove \eqref{E:LemmaForPartd} inductively below in Lemma \ref{T: mini}.

Following the proof of the Main Lemma in \cite{BCFF}, let $m$ be a sequence of integers $m=(m_1, \ldots, m_k)$ and set \begin{equation} \Upsilon_m(x|t) = \det(h_{m_i+j-i}(x|t))_{1\leq i,j\leq k}.\end{equation} By assumption, there is some $\lambda_{\ell}$ for which $\lambda_{\ell} >n-k$; take one of these rows and call it $s$. Apply Lemma \ref{T: mini} to row $s$, as every entry in that row will either have $n-k<\ell-i+j <n$ or $\ell-i+j \geq n$ and so each entry will be zero or will be a multiple of $q$ by Lemma \ref{T: mini}.
 
 Factor out one $q$ from all such rows, and then notice that then we will have the identity \begin{equation} \Upsilon_{m}(x|t) = (-1)^{n-k-1}q \Upsilon_{m''}(x|t),\end{equation} where $m'' = (m_1, \ldots, m_{s-1}, m_s-n, m_{s+1}, \ldots, m_k)$. Rearrange the rows of the matrix to put them "back in order", recalling that if $1 \leq r <s \leq k$ and the $r^{\text{th}}$ row of the matrix is swapped with the $(r+1)^{\text{st}}$, then the $(r+2)^{\text{nd}}$, and so on until the $s^{\text{th}}$ row, we have \begin{equation}\Upsilon_m(x|t) = (-1)^{s-r}\Upsilon_{m'}(x|t),\end{equation} where $m' = (m_1, \ldots, m_{r-1}, m_{r+1}-1, \ldots, m_s-1, m_{r}-r+s, m_{s+1}, \ldots, m_k)$. Combining these two processes of factoring out $q$ and rearranging rows so that the resulting $m'''$ is strictly decreasing, we get \begin{equation}\Upsilon_m(x|t) = (-1)^{n-k-1+s-r}q\Upsilon_{m'''}(x|t), \end{equation} with $m''' = (m_1, \ldots, m_{r-1},m_{r+1}-1, \ldots, m_s-1, m_r-r+s-n, m_{s+1}, \ldots, m_k)$ -- $\lambda$ with a rim hook removed. (If at any point there was no rim hook to remove, the process would halt, and an illegal rim hook would result in determinant zero.) If another rim hook can be removed, repeat. This gives part (d).
\end{proof}

\begin{lemma}\label{T: mini} In the ring $R/J$, 
\[\widetilde{\varphi}(h_{nd+j}(x|t)) = (-1)^{d(n-k-1)}q^d h_{j}(x|t) .\]
\end{lemma}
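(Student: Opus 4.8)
The plan is to prove this by induction on $d$, with the case $d=1$ doing most of the real work and the inductive step following by a routine telescoping. So the first task is to establish, in the ring $R$, that
\[
h_{n+j}(x|t) = (-1)^{n-k-1}q\, h_j(x|t) \bmod J
\]
for all $j \geq 0$. I would set this up using the recursion from \cite{MihalceaAdvances} already invoked in the proof of Lemma \ref{lem: quantuminvariance}, namely
\[
h_m(x|\tau^{-s}t) = h_m(x|\tau^{-s+1}t) + (t_{m+k-s} - t_{-s+1})h_{m-1}(x|\tau^{-s+1}t),
\]
or rather its "raising the degree" counterpart that expresses $h_{n+j}(x|t)$ in terms of $h_n(x|\cdot)$, $h_{n-1}(x|\cdot)$, \dots after $j$ applications. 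The key inputs are: (i) $h_n(x|\tau^{-s}t) = q \bmod J$ for every shift $s$, and (ii) $h_m(x|\tau^{-s}t) = 0 \bmod J$ for every shift $s$ and every $m$ with $n-k < m < n$ — both of which are exactly the content of Lemma \ref{lem: quantuminvariance}. One also needs to control the lower-degree terms $h_m$ with $0 \le m \le n-k$, which are genuinely nonzero in $R$; here the combinatorics of the recursion must be unwound carefully so that the coefficient of $h_j(x|t)$ comes out to precisely $(-1)^{n-k-1}q$ and all the intermediate contributions involving $h_{n-k+1},\dots,h_{n-1}$ vanish mod $J$ while the $h_n$-term contributes the single power of $q$ with the correct sign. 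I expect the cleanest route is to iterate the degree-raising recursion exactly $k$ times to pass from $h_{n+j}$ down to a combination of $h_{j},\dots,h_{j+k}$ evaluated at various shifts of $t$, mirroring the structure of the Jacobi–Trudi / Pieri relations that define $J$; the sign $(-1)^{n-k-1}$ should emerge from the relation $h_n(x|t) + (-1)^k q = 0$ together with the $k-1$ "middle" homogeneous relations that are forced to zero.

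For the inductive step, assuming the statement for $d$, I would write $h_{n(d+1)+j} = h_{n + (nd+j)}$ and apply the $d=1$ case with $j$ replaced by $nd+j$, giving $h_{n(d+1)+j} = (-1)^{n-k-1} q \, h_{nd+j} \bmod J$; then the inductive hypothesis rewrites $h_{nd+j} = (-1)^{d(n-k-1)} q^d h_j \bmod J$, and multiplying the two signs and powers of $q$ yields $(-1)^{(d+1)(n-k-1)} q^{d+1} h_j$, as desired. One subtlety to check is that the $d=1$ case really does hold for \emph{all} $j \geq 0$ (not just small $j$), since in the inductive step it is applied with the large argument $nd+j$; this is why it is important to prove the base case uniformly in $j$ rather than, say, only for $j$ in a bounded range.

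The main obstacle is the base case $d=1$: tracking the signs and making sure every term other than the desired $(-1)^{n-k-1} q\, h_j$ is killed modulo $J$. The recursion produces a sum of terms $h_m(x|\tau^{-s}t)$ with coefficients that are differences of torus weights $t_a - t_b$, and after the reduction $t_i \mapsto t_{i \bmod n}$ many of these coefficients collapse — this is precisely the phenomenon exploited in Lemma \ref{lem: quantuminvariance} — but one must verify that the bookkeeping works out so that exactly one factor of $q$ and the sign $(-1)^{n-k-1}$ survive. An alternative, possibly slicker, approach would be to argue directly from the ring presentation \eqref{E:QuantumSchurIsom}: in $R$ one has the generating-function identity $\sum_{m\ge 0} h_m(x|t)\, u^m \cdot \prod_{i=1}^k(1 - (x_i)u)^{-1}$-type relation, but since the factorial (shifted) setting complicates generating functions, I would fall back on the explicit recursion unless the generating-function manipulation can be made clean. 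Either way, once the base case is secured, the lemma follows, and with it parts (c) and (d) of Proposition \ref{T: nCores} and ultimately Proposition \ref{T:AlgProd}.
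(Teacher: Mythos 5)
Your inductive step on $d$ is fine (granted the $d=1$ case uniformly in $j$, which you correctly flag as the point that must be uniform), but the base case as you propose to prove it does not go through, and the missing ingredient is precisely the identity the paper's proof is built on. The shift recursion you plan to use, $h_m(x|\tau^{-s}t) = h_m(x|\tau^{-s+1}t) + (t_{m+k-s}-t_{-s+1})h_{m-1}(x|\tau^{-s+1}t)$, never lowers the leading degree: it only compares $h_m$ at adjacent shifts, with a degree-$(m-1)$ correction carrying a torus-weight coefficient. Iterating it (your ``degree-raising counterpart, $j$ or $k$ times'') always leaves a copy of $h_{n+j}$ at some shift on the right-hand side, so it cannot express $h_{n+j}(x|t)$ in terms of $h_j(x|t)$ and $q$ modulo $J$; and if you instead try to cascade down through the correction terms you accumulate products of differences $t_a-t_b$, most of which do \emph{not} vanish under $t_i\mapsto t_{i\bmod n}$, and no power of $q$ appears except accidentally. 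The relations generating $J$ in \eqref{E:QuantumSchurIsom} involve only $h_{n-k+1},\dots,h_n$, so some genuinely degree-lowering identity relating $h_m$ for $m>n$ to lower $h$'s is unavoidable, and the shift recursion from Lemma \ref{lem: quantuminvariance} is not it.

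What the paper uses for exactly this purpose is formula (2.10) of \cite{MihalceaTransactions}, the factorial Pieri/Newton-type identity
\begin{equation*}
\sum_{r=0}^{k}(-1)^r e_r(x|\tau^{s-1}t)\,h_{s-r}(x|t)=0 ,
\end{equation*}
valid because there are only $k$ variables $x_1,\dots,x_k$. Solving for the top term gives $h_m(x|t)$ as a $k$-term combination of $h_{m-1},\dots,h_{m-k}$ with factorial $e_r$ coefficients; modulo $J$ the terms with $n-k<m-r<n$ die, the term with $m-r=n$ contributes the single power of $q$ with its sign (e.g.\ $s=n+1$ collapses to $h_{n+1}\equiv e_1h_n\equiv \pm q\,h_1$), and an induction on the degree $m$ (base case $m=n$) closes the argument. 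Your appeal to ``Jacobi--Trudi / Pieri relations that define $J$'' gestures in this direction, but the relations defining $J$ alone do not relate $h_{n+j}$ to anything, and your fallback tool is the wrong recursion; without stating and using the $e$--$h$ identity above (or an equivalent degree-lowering relation), the base case $d=1$ — and hence the lemma — is not established. Note also that handling all $j$ with $0\le j\le n-1$ in the base case forces you back through the range $n-k<\deg<n$ where Lemma \ref{lem: quantuminvariance} applies, so in practice the clean organization is the paper's single induction on degree rather than an isolated $d=1$ computation.
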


\begin{proof} 
First we present relevant identities and a short example, and then we induct on the degree of the homogeneous polynomial using degree $n$ as the base case. We know that $h_n(x|t) = (-1)^{k}q$ in $R/J$ as this is a relation in $J$.

Use formula (2.10) in \cite{MihalceaTransactions} to formally write 
\begin{equation}\sum_{r=0}^k (-1)^r e_r(x|t) h_{s-r}(x| \tau^{1-s}t) = 0
\end{equation}
for each positive $s$. An alternative and equivalent formulation is \begin{equation}\label{eq: sumzero}  \sum_{r=0}^k (-1)^r e_r(x|\tau^{s-1}t) h_{s-r}(x|t) = 0.\end{equation} As an example of how the proof will go, look at the case $s=n+1$. Then,  
\begin{align}
\phantom{a} & \sum_{r=0}^k (-1)^r e_r(x|t)h_{n+1-r}(x|\tau^{-n}t)  \\
 \nonumber
& = \overline{h}_{n+1}(x|t) -\overline{h}_{n}(x|t)e_1(x|t) + \ldots \pm \overline{h}_{n-k+1}(x|t)e_k(x|t) = 0.\end{align} 
The relations in the ideal $J$ eliminate most terms in the middle expression and give\begin{equation} \overline{h}_{n+1}(x|t) = \overline{h}_n(x|t) e_1(x|t).\end{equation} This can be rewritten using the quantum relation to give \begin{equation} \widetilde{\varphi}(h_{n+1}(x|t))= (-1)^k q e_1(x|t). \end{equation}

We now induct.  Let $m>n$, and write $m=nd+j$ for positive $d$ and $0\leq j \leq n-1$. Use equation \eqref{eq: sumzero} again to write  \begin{equation}\sum_{r=0}^k (-1)^r e_r(x|\tau^{m-1}t) h_{m-r}(x|t) = 0.\end{equation} Move $h_m(x|t)$ to the left-hand side to write
\begin{equation} h_{m}(x|t)= \sum_{r=1}^{k}(-1)^{r+1}  e_{i}(x|\tau^{i-1}t)h_{r-i}(x|\tau^{1-i}t).\end{equation} Use the inductive hypothesis on $r$ to write each polynomial $h_{r-i}(x|\tau^{1-i}t)$ on the right-hand side in terms of $q$. The above calculations were all written out in the ring of symmetric functions; in our context this implies 
\begin{equation}
\widetilde{\varphi} (h_{nd+j}(x|t)) = (-1)^{d(k-1)}q^d h_j(x|t).\end{equation}
\end{proof}

\begin{proof}[Proof of Proposition \ref{T:AlgProd}]
To establish Proposition \ref{T:AlgProd}, we use the Jacobi-Trudi formula to expand the products of the Schur polynomials $s_{\overline{\lambda}}(x|t) \widehat{s_{\mu}}(x|t)$ and $\widehat{s}_{\lambda^-}(x|t) \widehat{s_{\mu}}(x|t)$.  Then
\begin{align}
M_{\overline{\lambda}} & = \left[ \begin{array}{cccc} h_{n-k+1}(x|t) & h_{n-k+2}(x|\tau^{-1}t) & \cdots& h_n(x | \tau^{1-k}t) \\ h_{\lambda_2-1}(x|t) &\ddots&& h_{\lambda_2+k-1}(x|\tau^{1-k}t) \\ \vdots&& \ddots& \vdots \\ h_{\lambda_k+1-k}(x |  t)& \cdots &\cdots & h_{\lambda_k}(x|\tau^{1-k}t) \end{array} \right], \\
M_{\mu} & =  \left[ \begin{array}{cccc} h_{\mu_1}(x|t) & h_{\mu_1+1}(x|\tau^{-1}t) & \cdots& h_{\mu_1+k-1}(x | \tau^{1-k}t) \\ h_{\mu_2-1}(x|t) & \ddots &  & h_{\mu_2+k-1}(x|\tau^{1-k}t) \\ \vdots&& \ddots& \vdots \\ h_{\mu_k+1-k}(x|t) & \cdots &\cdots & h_{\mu_k}(x|\tau^{1-k}t)\end{array}\right],
\end{align}
and so we can compute that 
\begin{align}\label{E:jacobitrudi}
s_{\overline{\lambda}}(x|t) \widehat{s_{\mu}}(x|t) &= \det \left( M_{\overline{\lambda}} M_{\mu} \right) \\ 
&=\det \left( \sum_{\ell=1}^k h_{\bar{\lambda}_i+i-\ell}(x|\tau^{1-\ell})h_{\mu_{\ell}+j-\ell}(x|\tau^{1-j}t)\right)_{1 \leq i,j \leq k}.
\end{align}
Using Lemma \ref{lem: quantuminvariance} above, notice that $h_{n-k+1+i-\ell}(x|\tau^st)$ is zero modulo the quantum ideal $J$ when $n-k+1+i-\ell$ takes values between $n-k+1$ and $n-1$. Since $\overline{\lambda}_1 = n-k+1$, the only entry in the first row of the matrix $ M_{\overline{\lambda}} M_{\mu}$ that is non-zero modulo $J$ is the last entry.  Expanding the determinant along the first row of the matrix then gives
\begin{multline}\label{eq: expansion} (-1)^k h_n(x|\tau^{1-k}t) \det \left( \sum_{\ell=1}^{k-1} h_{\lambda_{i+1}-1+\ell-i+1}(x| \tau^{1-\ell}t)h_{\mu_{\ell}-\ell+j}(x|\tau^{1-j}t) \right) \\
= 
(-1)^k q \det \left( \sum_{\ell=1}^{k-1} h_{\lambda_{i+1}-1+\ell-i+1}(x| \tau^{1-\ell}t)h_{\mu_{\ell}-\ell+j}(x|\tau^{1-j}t) \right)
\end{multline}
where the equality is given by the second part of Lemma \ref{lem: quantuminvariance}.

By contrast, expanding $\widehat{s}_{\lambda^-}(x|t) \widehat{s_{\mu}}(x|t)$ in a similar fashion gives
\begin{align}
\widehat{s}_{\lambda^-}(x|t) \widehat{s_{\mu}}(x|t) &= \det \left( M_{\lambda^-} M_{\mu} \right)  \\ \nonumber 
&=  \det \left( \sum_{\ell=1}^k h_{\lambda_{i+1}-1+\ell-i+1}(x| \tau^{1-\ell}t)h_{\mu_{\ell}-\ell+j}(x|\tau^{1-j}t) \right).
\end{align}
Since $\lambda_{k+1}=0$, the last row of the matrix $M_{\lambda^-} M_{\mu}$ consists of $k-1$ zeroes followed by a one.  Expanding the determinant then gives 
\begin{equation}
\widehat{s}_{\lambda^-}(x|t) \widehat{s_{\mu}}(x|t) = (-1)^k\det \left( \sum_{\ell=1}^{k-1} h_{\lambda_{i+1}-1+\ell-i+1}(x| \tau^{1-\ell}t)h_{\mu_{\ell}-\ell+j}(x|\tau^{1-j}t) \right) .
\end{equation}
The reduction $\widetilde{\varphi}$ applied to equation \eqref{eq: expansion} is the identity, since the indices of $t_i$ range only between 1 and $n$ (see the definition of factorial Schur polynomial for $\overline{\lambda}$). Thus  
\begin{align} 
\phantom{a} & \widetilde{\varphi}(\widehat{s}_{\overline{\lambda}}(x|t) \widehat{s_{\mu}}(x|t)) \\
\nonumber
& =
(-1)^k q \det \left( \sum_{\ell=1}^{k-1} h_{\lambda_{i+1}-1+\ell-i+1}(x| \tau^{1-\ell}t)h_{\mu_{\ell}-\ell+j}(x|\tau^{1-j}t) \right). 
\end{align}
We see that  $ \widehat{s}_{\lambda^-}(x|t) \widehat{s}_{\mu}(x|t) = \widetilde{\varphi}( \widehat{s}_{\lambda^-}(x|t)\widehat{s_\mu}(x|t))$ and $\widetilde{\varphi} ( s_{\overline{\lambda}}(x|t) \widehat{s_{\mu}}(x|t))$ differ by exactly $q$. Thus, \[\widetilde{\varphi}( s_{\overline{\lambda}}(x|t)\widehat{s_\mu}(x|t))= q \widetilde{\varphi}(\widehat{s}_{\lambda^-}(x|t)\widehat{s_\mu}(x|t)),\]
as desired.
\end{proof}




\bibliographystyle{alphanum}

\bibliography{EqRimhookRefs}

\newcommand{\etalchar}[1]{$^{#1}$}
\begin{thebibliography}{BKPT}

\bibitem[BCFF]{BCFF}
Aaron Bertram, Ionu{\c{t}} Ciocan-Fontanine, and William Fulton.
\newblock Quantum multiplication of {S}chur polynomials.
\newblock {\em J. Algebra}, 219(2):728--746, 1999.

\bibitem[Ber]{Bertram}
Aaron Bertram.
\newblock Quantum {S}chubert calculus.
\newblock {\em Adv. Math.}, 128(2):289--305, 1997.

\bibitem[BKPT]{BKPT}
Anders~Skovsted Buch, Andrew Kresch, Kevin Purbhoo, and Harry Tamvakis.
\newblock The puzzle conjecture for the cohomology of two-step flag manifolds.
\newblock {\em J. Algebraic Combin.}, 44(4):973--1007, 2016.

\bibitem[BKT]{BKT}
Anders~Skovsted Buch, Andrew Kresch, and Harry Tamvakis.
\newblock Gromov-{W}itten invariants on {G}rassmannians.
\newblock {\em J. Amer. Math. Soc.}, 16(4):901--915 (electronic), 2003.

\bibitem[BM]{BuchMihalcea}
Anders~S. Buch and Leonardo~C. Mihalcea.
\newblock Quantum {$K$}-theory of {G}rassmannians.
\newblock {\em Duke Math. J.}, 156(3):501--538, 2011.

\bibitem[Buc]{Buch}
Anders~Skovsted Buch.
\newblock Mutuations of puzzles and equivariant cohomology of two-step flag
  varieties.
\newblock {\em Ann. of Math. (2)}, 182(1):173--220, 2015.

\bibitem[CL]{ChenLouck}
William Y.~C. Chen and James~D. Louck.
\newblock The factorial {S}chur function.
\newblock {\em J. Math. Phys.}, 34(9):4144--4160, 1993.

\bibitem[FP]{FultonPandharipande}
W.~Fulton and R.~Pandharipande.
\newblock Notes on stable maps and quantum cohomology.
\newblock In {\em Algebraic geometry---{S}anta {C}ruz 1995}, volume~62 of {\em
  Proc. Sympos. Pure Math.}, pages 45--96. Amer. Math. Soc., Providence, RI,
  1997.

\bibitem[Ful]{Fulton}
William Fulton.
\newblock {\em Young tableaux}, volume~35 of {\em London Mathematical Society
  Student Texts}.
\newblock Cambridge University Press, Cambridge, 1997.
\newblock With applications to representation theory and geometry.

\bibitem[Giv]{Givental}
Alexander~B. Givental.
\newblock Equivariant {G}romov-{W}itten invariants.
\newblock {\em Internat. Math. Res. Notices}, (13):613--663, 1996.

\bibitem[GK1]{GiventalKim}
Alexander Givental and Bumsig Kim.
\newblock Quantum cohomology of flag manifolds and {T}oda lattices.
\newblock {\em Comm. Math. Phys.}, 168(3):609--641, 1995.

\bibitem[GK2]{GK}
Vassily Gorbounov and Christian Korff.
\newblock Equivariant quantum cohomology and {Y}ang-{B}axter algebras.
\newblock arXiv:1402.2907, 2014.

\bibitem[Gra]{Graham}
William Graham.
\newblock Positivity in equivariant {S}chubert calculus.
\newblock {\em Duke Math. J.}, 109(3):599--614, 2001.

\bibitem[JK]{JamesKerber}
Gordon James and Adalbert Kerber.
\newblock {\em The representation theory of the symmetric group}, volume~16 of
  {\em Encyclopedia of Mathematics and its Applications}.
\newblock Addison-Wesley Publishing Co., Reading, Mass., 1981.
\newblock With a foreword by P. M. Cohn, With an introduction by Gilbert de B.
  Robinson.

\bibitem[KT]{KnutsonTao}
Allen Knutson and Terence Tao.
\newblock Puzzles and (equivariant) cohomology of {G}rassmannians.
\newblock {\em Duke Math. J.}, 119(2):221--260, 2003.

\bibitem[Lak]{LaksovEqPieri}
Dan Laksov.
\newblock A formalism for equivariant {S}chubert calculus.
\newblock {\em Algebra Number Theory}, 3(6):711--727, 2009.

\bibitem[LR]{LiRavikumar}
Changzheng Li and Vijay Ravikumar.
\newblock Equivariant {P}ieri rules for isotropic {G}rassmannians.
\newblock {\em Math. Ann.}, 365(1-2):881--909, 2016.

\bibitem[LS1]{LamShimozono}
Thomas Lam and Mark Shimozono.
\newblock Quantum cohomology of {$G/P$} and homology of affine {G}rassmannian.
\newblock {\em Acta Math.}, 204(1):49--90, 2010.

\bibitem[LS2]{LSDoubleDouble}
Thomas Lam and Mark Shimozono.
\newblock From double quantum {S}chubert polynomials to $k$-double {S}chur
  functions via the {T}oda lattice.
\newblock arXiv:1109.2193, 2011.

\bibitem[LS3]{LSkDouble}
Thomas Lam and Mark Shimozono.
\newblock {$k$}-double {S}chur functions and equivariant (co)homology of the
  affine {G}rassmannian.
\newblock {\em Math. Ann.}, 356(4):1379--1404, 2013.

\bibitem[Mac]{Macdonald}
I.~G. Macdonald.
\newblock {\em Symmetric functions and {H}all polynomials}.
\newblock Oxford Mathematical Monographs. The Clarendon Press Oxford University
  Press, New York, second edition, 1995.
\newblock With contributions by A. Zelevinsky, Oxford Science Publications.

\bibitem[Mih1]{MihalceaAdvances}
Leonardo Mihalcea.
\newblock Equivariant quantum {S}chubert calculus.
\newblock {\em Adv. Math.}, 203(1):1--33, 2006.

\bibitem[Mih2]{MihalceaTransactions}
Leonardo~Constantin Mihalcea.
\newblock Giambelli formulae for the equivariant quantum cohomology of the
  {G}rassmannian.
\newblock {\em Trans. Amer. Math. Soc.}, 360(5):2285--2301, 2008.

\bibitem[MS]{MolevSagan}
Alexander~I. Molev and Bruce~E. Sagan.
\newblock A {L}ittlewood-{R}ichardson rule for factorial {S}chur functions.
\newblock {\em Trans. Amer. Math. Soc.}, 351(11):4429--4443, 1999.

\bibitem[Pos]{Postnikov}
Alexander Postnikov.
\newblock Affine approach to quantum {S}chubert calculus.
\newblock {\em Duke Math. J.}, 128(3):473--509, 2005.

\bibitem[Rie]{Rietsch}
Konstanze Rietsch.
\newblock Quantum cohomology rings of {G}rassmannians and total positivity.
\newblock {\em Duke Math. J.}, 110(3):523--553, 2001.

\bibitem[S{\etalchar{+}}]{sage}
W.\thinspace{}A. Stein et~al.
\newblock {\em {S}age {M}athematics {S}oftware ({V}ersion 5.10)}.
\newblock The Sage Development Team, 2013.
\newblock {\tt http://www.sagemath.org}.

\bibitem[San]{Santiago}
Ta\'{i}se Santiago.
\newblock {S}chubert calculus on a {G}rassmann algebra.
\newblock Ph.D. thesis, {P}olitecnico di {T}orino, 2006.

\end{thebibliography}

\end{document}